\newtheorem{thm}{Theorem}[section]
\newtheorem{corollary}[thm]{Corollary}
\newtheorem{lemma}[thm]{Lemma}
\newtheorem{proposition}[thm]{Proposition}
\newtheorem{con}[thm]{Condition}
\theoremstyle{definition}
\newtheorem{definition}{Definition}[section]
\newtheorem{defn}{Definition}[section]
\theoremstyle{remark}
\numberwithin{equation}{section}
\def\ZZ{\mathcal Z}
\def\e{\varepsilon}
\def\beq{\begin{equation}}
\def\nneq{\end{equation}}
\def\bdef{\begin{defn}}
\def\ndef{\end{defn}}
\def\bthm{\begin{thm}}
\def\nthm{\end{thm}}
\def\bprop{\begin{prop}}
\def\nprop{\end{prop}}
\def\brmk{\begin{remarks}}
\def\nrmk{\end{remarks}}
\def\bexa{\begin{exa}}
\def\nexa{\end{exa}}
\def\blem{\begin{lemma}}
\def\nlem{\end{lemma}}
\def\bcor{\begin{corollary}}
\def\ncor{\end{corollary}}
\def\bexe{\begin{exe}}
\def\nexe{\end{exe}}
\def\bprf{\begin{proof}}
\def\nprf{\end{proof}}
\def\bdes{\begin{description}}
\def\ndes{\end{description}}
\def\e{\varepsilon}
\def\<{\langle}
\def\>{\rangle}
\def\Re{{\mathrm{{\rm Re}}}}
\title[LDP for Stochastic generalized Ginzburg-Landau equation]{Large deviation principle for    stochastic generalized Ginzburg-Landau equation driven by  jump noise}
\author{Ran Wang}
\address[]{Ran Wang, School of Mathematics and Statistics,  Wuhan University,  Wuhan, 430072,
China.}
\email{rwang@whu.edu.cn}
\author{Beibei Zhang}
\address[]{Beibei Zhang, School of Mathematics and Statistics,  Wuhan University,  Wuhan, 430072,
China.}
\email{zhangbb@whu.edu.cn}
\date{}
\begin{document}
\maketitle

 \noindent {\bf Abstract:} In this paper, we establish a large deviation principle for the stochastic generalized Ginzburg-Landau equation driven by jump noise.  The main difficulties come from  the highly non-linear coefficient. Here we  adopt a new sufficient condition  for the weak convergence criteria, which is proposed by Matoussi, Sabbagh and Zhang (2021).

 \vskip0.3cm

 \noindent{\bf Keyword:} {Large deviation principle; Weak convergence method; stochastic generalized Ginzburg-Landau equation; Poisson random measure.
}
 \vskip0.3cm

\noindent {\bf MSC: } {60H10; 60H15}
\vskip0.3cm

\section{Introduction} 
 Consider the  following stochastic generalized Ginzburg-Landau equation (SGGLE in short) driven by jump noise:
    \begin{eqnarray}\label{Eq SGGLE1}
    \begin{cases}
     du^{\e}(t)= \big((1+i\alpha)\Delta u^{\e}(t)-(1-i\beta)|u^{\e}(t)|^{2\sigma}u^{\e}(t)+\gamma u^{\e}(t)+F(u^{\e}(t))  \big)dt\\
     \ \ \ \ \ \ \ \ \ \ \
   +\e \int_{\ZZ}u^{\e}(t-)g(z)\tilde \eta^{\e^{-1}}(dz,dt), \\
    u^{\e}(x,t)=0,\ \ \ \ \ \ \ \  \ \ \ \ \    \ t\ge0, \, x\in \partial D,\\
    u^{\e}(x,0)=u(0),  \ \ \ \ \ \ \ \ x\in D,
    \end{cases}
\end{eqnarray}
where  $D=(0,L_1)\times (0,L_2), L_1,~L_2>0,~i=\sqrt{-1}, \gamma>0$, $\Delta$ is the Laplacian operator on $D$ with the Dirichlet boundary condition,  the parameters $\alpha, \beta, \gamma$ are   real-valued constants, $u$ is a complex-valued scalar function. The derivative term $F$ is defined by $F(u)=\lambda_1\cdot \nabla (|u|^2u)+(\lambda_2\cdot \nabla u)|u|^2$ with two complex constant vectors $\lambda_1$ and $\lambda_2$.   $\eta^{\e^{-1}}$ is a Poisson random measure on $[0,T]\times\mathcal{Z}$ with a $\sigma$-finite intensity measure $\e^{-1}\lambda_T\otimes\nu$, $\lambda_T$ is the Lebesgue measure on $[0,T]$ and $\nu$ is a $\sigma$-finite measure on $\mathcal{Z}$.   $\tilde \eta^{\e^{-1}}(ds,dz)= \eta^{\e^{-1}}(dsdz)-\e^{-1}\nu(dz)ds$
is a compensated Poisson random measure.

The generalized Ginzburg-Landau equation, as a basis model of the nonlinear phenomena, is used in various areas of the physics, such as  the Rayleigh-Benard convection, the Taylor-Couette flow in fluid mechanics,  the drift dissipative wave in plasma physics and the turbulent flow in chemical reaction
(see,  e.g., \cite{CaoGaowang1998, DuanT1992, GW2009, GuoGao1995, OL1996, LG2000}).
For the stochastic Ginzburg-Landau equation (SGLE in short), a long list of studies on the existence and uniqueness, as well as the asymptotic behavior of solutions, have been studied, such as \cite{Barton-Smith2004,LG, WG2008, Yang2004, Yang2010} and  references therein.
 In particular, Yang \cite{Yang2010} proved the existence and uniqueness of the solution  for the two-dimensional SGGLE with a multiplicative noise,  Lin and Gao \cite{LG} were concerned with the SGGLE driven by jumps.

The objective of this  paper is to study the large deviation principle for $u^{\e}$ given in Eq. \eqref{Eq SGGLE1}.
  Firstly, let us rewrite \eqref{Eq SGGLE1} into an abstract form.

 Let $(\Omega, \mathcal{F}, \{\mathcal{F}\}_{t\geq0}, \mathbb{P})$ be a filtered probability space. Denote $
\langle\cdot, \cdot\rangle$  the inner product  in $L^2(D)$ by
$
\langle u, v\rangle=\textmd{Re}\int_D u(x)\bar v(x)dx,
$
for $u, v\in H:= L^2(D) $, where $L^2(D)$ is the space of square integrable complex valued functions on $D$, $\bar v(x)$ is the adjoint of $v(x)$. We always write $H^1:=H^1(D)$, where $H^1(D)$ is the Sobolev space of the functions $f$ such that $f$ and $\nabla f$ are in $H$. Let
$$V:=H_0^1:=\{f\in H^1(D) \text{ such that } f \text{ vanishes on }\partial D\};$$
 $\|\cdot\|_p=\|\cdot\|_{L^p}$ and $\|\cdot\|=\|\cdot\|_2$ when $p=2$ for simplicity. $V^{*}$ is the dual space of $V$.

 Let
 \begin{align}\label{1-1}
 Au:=(1+i\alpha)\Delta u
 \end{align}
 and
 \begin{align}\label{1-2}
 Bu:=-(1-i\beta) |u|^{2\sigma}u+\gamma u+F(u).
 \end{align}
 The operator $A$ is an isomorphism from $D(A)=V\cap H^2$ onto $H$, where $H^2(D)$ is the Sobolev space of the functions $f$ such that $f$ , $\nabla f$ and $\Delta f$ are in $H$.
 We now rewrite \eqref{Eq SGGLE1} into the following abstract form:
    \begin{equation}\label{Eq SGGLE2}
    \begin{cases}
     du^{\e}(t)= \big(  Au^{\e}(t)+Bu^{\e}(t)  \big)dt    +\e \int_{\ZZ}u^{\e}(t-)g(z)\tilde \eta^{\e^{-1}}(dz,dt), \\
    u^{\e}(x,t)=0,\ \ \ \ \ \ \ \  \ \ \ \ \   \ \ \  \ \ \ \ \  \ \ \ \ \   t\ge0, x\in \partial D,\\
    u^{\e}(x,0)=u(0),  \ \ \ \ \ \ \ \  \ \ \  \ \ \ \ \ \ \ \ \ \  x\in D.
    \end{cases}
\end{equation}

\begin{definition} An $\mathcal{F}$-adapted process $u^{\e}=\big\{u^{\e}(t)\}_{t\in[0,T]}$ is said to be a solution of Eq. \eqref{Eq SGGLE2} if for its $dt\times d\mathbb P$-equivalent class $\tilde u^{\e}$, we have
\begin{itemize}
  \item[(1)] $\tilde u^{\e}\in L^2(\Omega; L^2([0,T];V))$;
    \item[(2)]  For every $t\in [0,T]$, 
    \begin{equation}\label{eq SPDE u-10}
    u^{\e}(t)=u(0)+\int_0^t A\tilde u^{\e}(s)ds+\int_0^t B\tilde u^{\e}(s) ds+\e\int_0^t \int_{\ZZ} \tilde u(s-)g(z) \tilde \eta^{\e^{-1}}(dz,ds)  \ \ \ \mathbb P{\text -a.s.}
    \end{equation}
 \end{itemize}
\end{definition}
\begin{con}$(\bf{Exponential\ Integrability})$\label{Condition Exponential Integrability}
There exists $\delta>0$ such that for all $E\in\mathcal{B}(\mathcal Z)$ such that
$\nu(E)<\infty$, the following holds
         $$
           \int_Ee^{\delta|g(z)|^2}\nu(dz)<\infty.
         $$
\end{con}

By using   the  argument  in the proof of Theorem 2.5 in Lin and Gao \cite{LG}, one can obtain   the following well-posedness result for Eq.\,\eqref{Eq SGGLE2}.
\begin{thm}\label{Sto-equ-solution}
Suppose Condition \ref{Condition Exponential Integrability} holds, $2\leq p<2\sigma$, $\sigma>2$ and $0<|\beta|<\frac{\sqrt{2\sigma+1}}{\sigma}$. If $u(0)\in V$ is deterministic, then there exists a unique $H$-valued progressively measurable process $u^\e$ such that $u^\e\in D([0,T],H)\cap L^2([0,T], V)\cap L^{2\sigma+2}([0,T], L^{2\sigma+2}(D))$   for any $T>0$ almost surely.
\end{thm}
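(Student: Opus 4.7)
The plan is to adapt the Galerkin-based construction used for Theorem 2.5 of \cite{LG} to the present parameter regime. First I would fix the orthonormal basis $\{e_k\}_{k\ge1}$ of $H$ consisting of the Dirichlet eigenfunctions of $-\Delta$, set $H_n=\mathrm{span}\{e_1,\dots,e_n\}$ with orthogonal projection $\Pi_n$, and consider the projected jump SDE
\[
du_n^\e=\Pi_n(Au_n^\e+Bu_n^\e)\,dt+\e\int_\ZZ\Pi_n\bigl(u_n^\e(t-)g(z)\bigr)\tilde\eta^{\e^{-1}}(dz,dt),
\]
with initial data $\Pi_n u(0)$. Since $A$ is linear, $B$ is locally Lipschitz on $H_n$, and the jump integrand is linear in $u$, classical existence--uniqueness results for finite-dimensional SDEs driven by Poisson random measures give a unique c\`adl\`ag solution $u_n^\e$ up to an explosion time; the uniform estimates below will rule out explosion on $[0,T]$.

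The crux is to derive $n$-uniform a priori bounds. It\^o's formula applied to $\|u_n^\e(t)\|^2$ combined with $\mathrm{Re}\langle(1+i\alpha)\Delta u,u\rangle=-\|\nabla u\|^2$ and $\mathrm{Re}\langle-(1-i\beta)|u|^{2\sigma}u,u\rangle=-\|u\|_{2\sigma+2}^{2\sigma+2}$ yields a first energy estimate once the derivative term $F(u)$ is absorbed via H\"older and Gagliardo--Nirenberg inequalities in dimension two; the restriction $p<2\sigma$ is exactly what makes this absorption possible. The compensated jump integral is controlled by the Burkholder--Davis--Gundy inequality together with Condition~\ref{Condition Exponential Integrability}, which makes every polynomial moment of $g$ finite. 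The second, more delicate application of It\^o's formula is to $\|\nabla u_n^\e(t)\|^2$: integration by parts in the nonlinear term produces a positive contribution roughly of the form $(2\sigma+1)\int_D|u|^{2\sigma}|\nabla u|^2\,dx$ together with an indefinite $\beta$-dependent cross term bounded by $\sigma|\beta|\int_D|u|^{2\sigma}|\nabla u|^2\,dx$ (up to constants), so the full quantity stays nonnegative precisely under the hypothesis $|\beta|<\sqrt{2\sigma+1}/\sigma$. Combined with stopping-time localization for the quadratic jump term, these computations yield
\[
\ee\sup_{t\le T}\|u_n^\e(t)\|^2+\ee\int_0^T\|u_n^\e(s)\|_V^2\,ds+\ee\int_0^T\|u_n^\e(s)\|_{2\sigma+2}^{2\sigma+2}\,ds\le C.
\]

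With these bounds the laws of $\{u_n^\e\}$ are tight in $D([0,T];H)\cap L^2([0,T];V)$ by an Aldous-type criterion adapted to Poisson noise; Skorokhod's representation provides an a.s. convergent subsequence on an enlarged probability space, and the limit is identified as a solution of \eqref{eq SPDE u-10} via the local monotonicity of $A+B$ with weight controlled by the $L^{2\sigma+2}$-norm. Uniqueness then follows by applying It\^o's formula to $\|u^\e-v^\e\|^2$ for two solutions with the same initial condition: the same local monotonicity estimate, combined with the exponential integrability of $g$ and Gronwall's lemma, forces coincidence. The main obstacle I expect is the $V$-norm estimate, where three nonlinear effects must close simultaneously into a usable bound: the sign of the nonlinear dissipation from $-(1-i\beta)|u|^{2\sigma}u$ (which relies on the sharp bound $|\beta|<\sqrt{2\sigma+1}/\sigma$), the absorption of the non-monotone term $F(u)$ through two-dimensional interpolation (using $p<2\sigma$), and the stopping-time treatment of the multiplicative jump noise under Condition~\ref{Condition Exponential Integrability}. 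Balancing all constants uniformly in $n$ is what the parameter restrictions in the hypothesis are engineered to enable.
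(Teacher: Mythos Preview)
The paper itself gives no proof of this theorem; it simply points to the argument of Theorem~2.5 in Lin--Gao~\cite{LG}. Your outline is faithful to that strategy in its essentials: Galerkin projection onto Dirichlet eigenspaces, $n$-uniform energy estimates for $\|u_n^\e\|^2$ and $\|\nabla u_n^\e\|^2$ via It\^o's formula (the latter relying on $|\beta|<\sqrt{2\sigma+1}/\sigma$ to keep the dissipation from the power nonlinearity), absorption of $F(u)$ by two-dimensional interpolation under $p<2\sigma$, and BDG plus Gronwall to close the jump terms under Condition~\ref{Condition Exponential Integrability}.

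Where you diverge from \cite{LG} is in the passage to the limit. You propose an Aldous-type tightness argument followed by Skorokhod representation, which yields first a martingale solution on an auxiliary space and then requires a Yamada--Watanabe step to return to the original filtration. Judging from the paper's own proof of the skeleton equation in Section~3 (which it says is ``inspired by Section~3 in~\cite{LG}''), the route taken in \cite{LG} is instead the variational/Minty one: stay on the fixed probability space, extract weak limits of $u_n^\e$, $T(u_n^\e)$, $F(u_n^\e)$ in the reflexive spaces $L^2(\Omega;L^2([0,T];V))$, $L^q(\Omega\times[0,T];L^q(D))$, $L^2(\Omega\times[0,T];H)$, and then identify the nonlinear limit by applying It\^o's formula to $e^{-r(t)}\|u_n^\e(t)\|^2$ with the explicit weight $r'(t)$ built from powers of $\|\nabla\xi(t)\|$ (see \eqref{r-set-definition}--\eqref{r-set} in the paper). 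This delivers a strong solution directly, without changing probability space. Your Skorokhod route is also valid and perhaps more familiar for jump-driven SPDEs, but it is heavier machinery here; the weighted-Minty argument exploits the specific locally-monotone structure of $A+B$ and is what the paper has in mind when it cites \cite{LG}.
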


The aim of  this   paper is to establish a large deviation principle for SGGLE \eqref{Eq SGGLE2}.
There are some references about
large deviation principles for the SGLEs,  see, e.g.,  Yang and Hou \cite{Yang2008} for    the SGLE with multiplicative Gaussian noise; Yang and Pu \cite{YP2015} for  the stochastic 3D cubic Ginzburg-Landau equation; Pu and Huang \cite{PH2019}   for the 2-D derivative Ginzburg-Landau equation.
   In their proofs,  the weak convergence method based on a variational representation for positive measurable functionals of  the  Brown motion is proved to be a powerful tool to establish the large deviation principles (see, e.g., \cite{Budhiraja-Dupuis2000, Budhiraja-Dupuis-M2008}).

The weak convergence criterion    for the case of Poisson random measures was   introduced by \cite{Budhiraja-Chen-Dupuis, Budhiraja-Dupuis-Maroulas.}. Recently, a sufficient condition to verify the large deviation criteria of Budhiraja,  Dupuis and Maroulas \cite{Budhiraja-Dupuis-M2008} is given    by Matoussi, Sabbagh and Zhang   \cite{AW2021}. It is also used by Liu, Song, Zhai and Zhang in \cite{LW2020} to study the LDP for the Mckean-Vlasov equation with jumps. The advantage of the new sufficient condition is to avoid proving the tightness of the controlled stochastic partial differential equation. This new sufficient condition is recently successfully applied to study the  large deviation principle in \cite{DWZ2020,  WZZ2021, WZ2020}.

 There are many results about the LDPs   related to stochastic partial differential equations driven by jump noise  (see, e.g., \cite{Budhiraja-Chen-Dupuis, Dong-Zhang, Rockner-Zhang, Swiech-Zabczyk, XiongZhai2018, Xu-Zhang, YZZ2015,Zhai-Zhang, Zhang-Zhou}), among which Xiong and Zhai \cite{XiongZhai2018} provided a unified proof  of LDPs for a large class of SPDEs with locally monotonic coefficients driven by L\'evy noise  by using the weak convergence approach. However, the method in \cite{XiongZhai2018} does not work for the SGGLE \eqref{Eq SGGLE2}, since the local monotonicity  Condition (i)-(iv) in \cite{XiongZhai2018}    cannot be satisfied  in a straight way. Although we can use the nonlinear structure and use the argument  in \cite{LG} to obtain the local monotonicity condition (ii), the coercivity (iii) and the growth condition (iv)  are very hard to verify for the SGGLE \eqref{Eq SGGLE2}. To overcome this diffculity, we use the argument of  \cite{LG} to get some preciser estimates in the study the LDP for  \eqref{Eq SGGLE2}.

 This paper is organized as follows.  In Section 2, we first recall the poisson random measure and the weak convergence criteria for the LDP  obtained in \cite{Budhiraja-Dupuis-M2008} and  \cite{AW2021}, and then we  present the main result of this paper.  Section 3 is devoted to study the skeleton equation. In Sections 4 and   5, we verify the two conditions for the  weak convergence criteria.

\section{Preliminaries and main results}
\subsection{Poisson random measure}\label{Section Representation}
Recall that $\ZZ$ is a locally compact Polish space. Denote by $\mathcal{M}_{FC}(\ZZ)$ the collection of all measures on $({\ZZ},\mathcal{B}({\ZZ}))$ such that $\nu(K)<\infty$ for any compact $K \in \mathcal{B}({\ZZ})$.
Denote by $C_c(\ZZ)$ the space of the continuous functions with compact supports, endowed $\mathcal{M}_{FC}({Z})$ with the weakest topology such that for every $f\in
C_c({\ZZ})$,  the function
\[\nu\longmapsto\langle f,\nu\rangle=\int_{{\ZZ}}f(u)d\nu(u),\]
is continuous for every $\nu\in\mathcal{M}_{FC}({\ZZ})$.
This topology can be metrized such that $\mathcal{M}_{FC}(\ZZ)$ is a Polish space (see, e.g.,
\cite{Budhiraja-Dupuis-Maroulas.}).

For any  $T\in(0,\infty)$, we denote $\ZZ_T=[0,T]\times \ZZ$ and $\nu_T=\lambda_T\otimes\nu$ with $\lambda_T$ being Lebesgue measure on $[0,T]$ and
$\nu\in\mathcal{M}_{FC}(\ZZ)$.
Let $\textbf{n}$ be a Poisson random measure on $\ZZ_T$ with intensity measure
$\nu_T$, it is well-known (\cite{Ikeda-Watanabe}) that $\textbf{n}$ is an $\mathcal{M}_{FC}(\ZZ_T)$ valued random variable such that

(i) for each
$B\in\mathcal{B}(\ZZ_T)$
with $\nu_T(B)<\infty$, $\textbf{n}(B)$ is Poisson distributed with mean $\nu_T(B)$;

(ii) for disjoint
$B_1,\cdots,B_k\in\mathcal{B}(\ZZ_T)$, $\textbf{n}(B_1),\cdots,\textbf{n}(B_k)$ are mutually independent
random
variables.

For notational simplicity, we write from now on
\begin{eqnarray}\label{eq P4 star}
\mathbb{M}=\mathcal{M}_{FC}(\ZZ_T),
\end{eqnarray}
and denote by $\mathbb{P}$ the probability measure induced by $\textbf{n}$ on $(\mathbb{M},\mathcal{B}(\mathbb{M}))$.
Under $\mathbb{P}$, the canonical map, $\eta:\mathbb{M}\rightarrow\mathbb{M},\ \eta(m)\doteq m$, is a Poisson random measure with
intensity measure $\nu_T$. With applications to large deviations in mind, we also consider, for $\theta>0$,
probability
measures $\mathbb{P}_\theta$ on $(\mathbb{M},\mathcal{B}(\mathbb{M}))$ under which $\eta$ is a Poisson random measure
with intensity $\theta\nu_T$. The corresponding expectation operators will be denoted by $\mathbb{E}$ and
$\mathbb{E}_\theta$,
respectively.

Denote
\begin{equation}\label{eq0302a}
\mathbb{Y}=\ZZ\times[0,\infty), \ \ \mathbb{Y}_T=[0,T]\times\mathbb{Y}, \ \ \bar{\mathbb{M}}=\mathcal{M}_{FC}(\mathbb{Y}_T).\end{equation}
Let $\bar{\mathbb{P}}$ be the unique probability measure on $(\bar{\mathbb{M}},\mathcal{B}(\bar{\mathbb{M}}))$
under which the canonical map, $\bar{\eta}:\bar{\mathbb{M}}\rightarrow\bar{\mathbb{M}},\bar{\eta}(\bar{m})\doteq \bar{m}$, is a Poisson
random
measure with intensity measure $\bar{\nu}_T=\lambda_T\otimes\nu\otimes \lambda_\infty$, with $\lambda_\infty$ being
Lebesgue measure on $[0,\infty)$.
The corresponding expectation operator will be denoted by $\bar{\mathbb{E}}$. Let
$\mathcal{F}_t\doteq\sigma\{\bar{\eta}((0,s]\times A):0\leq s\leq t,A\in\mathcal{B}(\mathbb{Y})\},$ and let
$\bar{\mathcal{F}}_t$
denote the completion under $\bar{\mathbb{P}}$. We denote by $\bar{\mathcal{P}}$ the predictable $\sigma$-field on
$[0,T]\times\bar{\mathbb{M}}$
with the filtration $\{\bar{\mathcal{F}}_t:0\leq t\leq T\}$ on $(\bar{\mathbb{M}},\mathcal{B}(\bar{\mathbb{M}}))$.
Let
$\bar{\mathcal{A}}$
be the class of all $(\bar{\mathcal{P}}\otimes\mathcal{B}(\mathcal{Z}))/\mathcal{B}([0,\infty))$-measurable maps
$\varphi:\mathcal{Z}_T\times\bar{\mathbb{M}}\rightarrow[0,\infty)$. For $\varphi\in\bar{\mathcal{A}}$, define a
counting process $\eta^\varphi$ on
$\ZZ_T$ by
   \begin{eqnarray}\label{Jump-representation}
      \eta^\varphi((0,t]\times U)=\int_{(0,t]\times U}\int_{(0,\infty)}1_{[0,\varphi(s,x)]}(r)\bar{\eta}(dsdxdr),\
      t\in[0,T],U\in\mathcal{B}(\mathcal{Z}).
   \end{eqnarray}
Here $\eta^\varphi$ is called a controlled random measure, with $\varphi$ selecting the intensity for the points at location
$x$
and time $s$, in a possibly random but non-anticipating way. When $\varphi(s,x,\bar{m})\equiv\theta\in(0,\infty)$, we
write $\eta^\varphi=\eta^\theta$. Note that $\eta^\theta$ has the same distribution with respect to $\bar{\mathbb{P}}$ as $\eta$
has with respect to $\mathbb{P}_\theta$.

\subsection{A general criterion for large deviation principle }

Let $\{u^\e\}_{\e>0} $ be a family of random variables defined on a probability space
$(\Omega,\mathcal{F},\mathbb{P})$
and taking values in a Polish space $\mathcal{E}$.
 \begin{definition}\label{Dfn-Rate function}
       \emph{\textbf{(Rate function)}} A function $I: \mathcal{E}\rightarrow[0,\infty]$ is called a rate function on
       $\mathcal{E}$,
       if for each $M<\infty$ the level set $\{y\in\mathcal{E}:I(y)\leq M\}$ is a compact subset of $\mathcal{E}$.

    \end{definition}
    \begin{definition}  \label{d:LDP}
       \emph{\textbf{(Large deviation principle)}} Let $I$ be a rate function on $\mathcal{E}$. The sequence
       $\{u^\e\}_{\e>0}$
       is said to satisfy a large deviation principle (LDP) on $\mathcal{E}$ with the rate function $I$ if the following two
       conditions
       hold:
\begin{itemize}
\item[(a).]   For each closed subset $F$ of $\mathcal{E}$,
              $$
                \limsup_{\e\rightarrow 0}\e \log\mathbb{P}(u^\e\in F)\leq\inf_{y\in F}I(y);
              $$

        \item[(b).]  For each open subset $G$ of $\mathcal{E}$,
              $$
                \liminf_{\e\rightarrow 0}\e \log\mathbb{P}(u^\e\in G)\geq-\inf_{y\in G}I(y).
              $$
              \end{itemize}
    \end{definition}
\vskip 3mm
Define $l:[0,\infty)\rightarrow[0,\infty)$ by
    $$
    l(r)=r\log r-r+1,\ \ r\in[0,\infty).
    $$
For any $\psi\in\bar{\mathcal{A}}$, the quantity
    \begin{eqnarray}\label{L_T}
      L_T(\varphi)=\int_{\ZZ_T}l(\psi(t,x,\omega))\nu_T(dtdx)
    \end{eqnarray}
is well defined as a $[0,\infty]$-valued random variable.
Define
   \begin{eqnarray}\label{S_N}
     S^N=\Big\{\varphi:\mathcal{Z}_T\rightarrow[0,\infty),\,L_T(\varphi)\leq N\Big\}.
   \end{eqnarray}
A function $\varphi\in S^N$ can be identified with a measure $\nu_T^\varphi\in\mathbb{M}$, defined by
   \begin{eqnarray*}
      \nu_T^\varphi(A)=\int_A \varphi(s,x)\nu_T(dsdx),\ \ A\in\mathcal{B}(\ZZ_T).
   \end{eqnarray*}
This identification induces a topology on $S^N$ under which $S^N$ is a compact space. Throughout this paper we use this topology on $S^N$. Denote
\begin{align}\label{S-definition}
\mathbb{S}:=\bigcup_{N=1}^\infty S^N
\end{align}
and $\bar{\mathbb{A}}^N:=\{\psi\in\bar{\mathcal{A}}\ \text{and}\ \psi(\omega)\in S^N,\ \bar{\mathbb{P}}\text{-}a.s.\}$. Let $\{K_n\subset \ZZ,\
n=1,2,\cdots\}$
be an increasing sequence of compact sets such that $\cup _{n=1}^\infty K_n=\ZZ$. For each $n$, let
   \begin{eqnarray*}
     \bar{\mathcal{A}}_{b,n}
          &=&
              \Big\{\psi\in\bar{\mathcal{A}}:
                                   {\rm \ for\ all\ }(t,\omega)\in[0,T]\times\bar{\mathbb{M}},\
                                   n\geq\psi(t,x,\omega)\geq
                                   1/n\ {\rm if}\ x\in K_n\ \\
                                  &&\ \ \ \ \ \ \ \ \ \ \ \ \
                                       {\rm and}\ \psi(t,x,\omega)=1\ {\rm if}\ x\in K_n^c
              \Big\},
   \end{eqnarray*}
and let $\bar{\mathcal{A}}_{b}=\cup _{n=1}^\infty\bar{\mathcal{A}}_{b,n}$. Define
$\tilde{\mathbb{A}}^N=\bar{\mathbb{A}}^N\cap\Big\{\phi: \phi\in\bar{\mathcal{A}}_b\Big\}$.
 
Let
$\{\mathcal{G}^\e \}_{\e >0}$
be a family of measurable maps from $\mathbb{M}$ to $\mathbb{U}$, where $\mathbb{M}$ is given in (\ref{eq P4 star}) and $\mathbb{U}$ is a Polish space. We present a sufficient condition established in \cite{LW2020, AW2021}
to obtain an LDP of the family
$\mathcal{G}^\e \Big(\e  \eta^{\e ^{-1}}\Big)$, as $\e \rightarrow 0$ .

\begin{con}\label{LDP}
 Suppose that there exists a measurable map $\mathcal{G}^0:\mathbb{M}\rightarrow \mathbb{U}$ such that the following hold:
\begin{itemize}
\item[(A).] For any $ N\in\mathbb{N}$, let $\varphi_n,\ \varphi\in S^N$ be such that $\varphi_n\rightarrow \varphi$ as
 $n\rightarrow\infty$. Then
      $$
         \mathcal{G}^0\left(\nu_T^{\varphi_n}\right)\rightarrow \mathcal{G}^0\left(\nu_T^{\varphi}\right)\quad\text{in}\quad \mathbb{U}.
      $$
\item[(B).] For any $N\in \mathbb{N}$, any $\varphi_\e \in\tilde{\mathbb{A}}^N$ and any $\delta>0$,
      $$
         \lim_{\e\rightarrow0}\mathbb P\left(d\left(\mathcal{G}^\epsilon\left(\e
         \eta^{\e ^{-1}\varphi_\e }\right),\mathcal{G}^0\left(\nu_T^{\varphi_\e}\right)\right)>\delta\right)=0.
      $$
 \end{itemize}
\end{con}
For $\phi\in\mathbb{U}$, define $\mathbb{S}_\phi=\Big\{\varphi\in S:\phi=\mathcal{G}^{0}\left(
\nu^\varphi_T\right)\Big\}$. Let $I:\mathbb{U}\rightarrow[0,\infty]$
be defined by
     \begin{eqnarray}\label{Rate function I}
        I(\phi)=\inf_{\varphi\in\mathbb{S}_\phi}L_T(\varphi),\ \qquad \phi\in\mathbb{U}.
     \end{eqnarray}
By convention, $I(\phi)=\infty$ if $\mathbb{S}_\phi=\emptyset$.
The following theorem was proved in Theorem 3.2 of \cite{AW2021} and Theorem 4.4 of \cite{LW2020}.
\begin{thm}\label{LDP-main-new}\cite{LW2020,AW2021}
For $\e>0$, let $u^\e$ be defined by $u^\e=\mathcal{G}^\e\Big(\e
\eta^{\e^{-1}}\Big)$, and suppose
that Condition \ref{LDP} holds. Then
the family $\{u^\e\}_{\e>0}$ satisfies a large deviation principle with the rate function $I$ defined by \eqref{Rate function I}.
\end{thm}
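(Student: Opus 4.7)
The plan is to follow the weak convergence approach of Budhiraja and Dupuis, adapted to the scaled Poisson random measure $\e\eta^{\e^{-1}}$. Since $\mathbb{U}$ is Polish and the rate function $I$ will turn out to be good, the LDP is equivalent to the Laplace principle: it suffices to show that for every bounded continuous $F:\mathbb{U}\to\mathbb{R}$,
\[
\lim_{\e\to 0}\e\log\mathbb{E}\bigl[e^{-F(u^\e)/\e}\bigr]=-\inf_{\phi\in\mathbb{U}}\bigl[F(\phi)+I(\phi)\bigr].
\]
The central ingredient is the Budhiraja--Dupuis--Maroulas variational representation (see \cite{Budhiraja-Dupuis-Maroulas.}); in the present scaling it reads
\[
-\e\log\mathbb{E}\bigl[e^{-F(u^\e)/\e}\bigr]=\inf_{\varphi\in\bar{\mathcal{A}}}\bar{\mathbb{E}}\Bigl[L_T(\varphi)+F\bigl(\mathcal{G}^\e(\e\eta^{\e^{-1}\varphi})\bigr)\Bigr],
\]
so the analysis reduces to studying near-minimizers of this infimum.

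For the Laplace upper bound (corresponding to the LDP upper bound on closed sets), I pick, for each $\e$, a control $\varphi_\e\in\bar{\mathcal{A}}$ whose value lies within $\e$ of the infimum. Boundedness of $F$ then forces $\bar{\mathbb{E}}[L_T(\varphi_\e)]\le 2\|F\|_\infty+1$, so a Markov-type argument combined with the standard truncation through the classes $\bar{\mathcal{A}}_{b,n}$ lets us replace $\varphi_\e$ by an element of $\tilde{\mathbb{A}}^N$ for some fixed large $N$, at an asymptotically vanishing cost in the variational formula. Condition (B) then substitutes $F(\mathcal{G}^0(\nu_T^{\varphi_\e}))$ for $F(\mathcal{G}^\e(\e\eta^{\e^{-1}\varphi_\e}))$ up to an error that vanishes in probability, and hence in expectation by boundedness of $F$. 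Exploiting the compactness of $S^N$, extract a subsequential limit $\varphi\in S^N$ of $\varphi_\e$; Condition (A) together with the lower semicontinuity of $L_T$ on $S^N$ yields
\[
\liminf_{\e\to 0}\bar{\mathbb{E}}\bigl[L_T(\varphi_\e)+F(\mathcal{G}^0(\nu_T^{\varphi_\e}))\bigr]\ge L_T(\varphi)+F(\mathcal{G}^0(\nu_T^{\varphi}))\ge\inf_{\phi}\bigl[F(\phi)+I(\phi)\bigr],
\]
which is the desired upper bound.

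For the Laplace lower bound, fix $\phi\in\mathbb{U}$ with $I(\phi)<\infty$ and, for each $\delta>0$, select $\varphi\in\mathbb{S}_\phi$ with $L_T(\varphi)\le I(\phi)+\delta$; by a standard approximation one may also assume $\varphi\in\tilde{\mathbb{A}}^N$. Viewing this deterministic $\varphi$ as a particular admissible control in the variational representation and invoking Condition (B) to obtain $\mathcal{G}^\e(\e\eta^{\e^{-1}\varphi})\to\mathcal{G}^0(\nu_T^{\varphi})=\phi$ in probability in $\mathbb{U}$, dominated convergence gives
\[
\limsup_{\e\to 0}\bigl(-\e\log\mathbb{E}[e^{-F(u^\e)/\e}]\bigr)\le L_T(\varphi)+F(\phi)\le F(\phi)+I(\phi)+\delta,
\]
and infimizing over $\phi$ and letting $\delta\downarrow 0$ completes the Laplace principle. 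Goodness of $I$ is then a direct consequence of (A): writing $\{I\le M\}\subseteq\mathcal{G}^0(\{\nu_T^\varphi:\varphi\in S^{\lceil M\rceil}\})$ exhibits each sublevel set as a closed subset of the continuous image of a compact set.

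The main obstacle in this scheme is the truncation step in the upper-bound argument, where one must pass from a general near-minimizer $\varphi_\e\in\bar{\mathcal{A}}$ into the smaller class $\tilde{\mathbb{A}}^N$ without inflating either the cost $L_T$ or the value of the controlled functional in the limit. This is precisely where the Matoussi--Sabbagh--Zhang formulation gains its advantage over the earlier Budhiraja--Dupuis--Maroulas criterion: only Condition (B) along such truncated bounded controls is needed, sidestepping any direct tightness argument for the controlled SPDE itself. The detailed execution follows the template of \cite{AW2021} and \cite{LW2020}.
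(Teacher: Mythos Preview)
The paper does not prove this theorem: it is quoted verbatim as a known result, with the line ``The following theorem was proved in Theorem 3.2 of \cite{AW2021} and Theorem 4.4 of \cite{LW2020}'' immediately preceding it, and is then used as a black box in the proof of Theorem~\ref{Main-thm-02}. So there is no ``paper's own proof'' to compare against; your proposal is supplying strictly more than the paper does.

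As a sketch of the argument in \cite{AW2021,LW2020}, your outline is broadly correct (Laplace principle via the variational representation, truncation into $\tilde{\mathbb{A}}^N$, then Conditions (A) and (B)). One point deserves more care: in the upper bound you write ``extract a subsequential limit $\varphi\in S^N$ of $\varphi_\e$'' and then bound $\liminf_\e \bar{\mathbb E}[\,\cdot\,]$ from below by a deterministic expression. But the near-minimizers $\varphi_\e\in\tilde{\mathbb A}^N$ are random $S^N$-valued elements, not deterministic points of $S^N$, so one cannot simply pick a subsequential limit in $S^N$. The correct step is to use compactness of $S^N$ to get tightness of the laws of $\varphi_\e$, pass to a weak limit, and combine Fatou's lemma with the lower semicontinuity of $L_T$ and the continuity of $\mathcal G^0$ on $S^N$ (Condition (A)) to obtain $\liminf_\e \bar{\mathbb E}[L_T(\varphi_\e)+F(\mathcal G^0(\nu_T^{\varphi_\e}))]\ge \bar{\mathbb E}[L_T(\varphi)+F(\mathcal G^0(\nu_T^{\varphi}))]\ge \inf_\phi[F(\phi)+I(\phi)]$. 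Similarly, your goodness argument is slightly loose: the inclusion $\{I\le M\}\subseteq \mathcal G^0(\{\nu_T^\varphi:\varphi\in S^{\lceil M\rceil}\})$ presumes the infimum in \eqref{Rate function I} is attained, which must be argued (again via compactness of $S^N$ and lower semicontinuity of $L_T$). These are routine details in \cite{AW2021,LW2020}, but as written your sketch elides them.
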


\subsection{Main result}
Recall that $u^{\e}$ is the solution to Eq. (\ref{Eq SGGLE2}) with a deterministic initial value $u(0) \in V$.
It follows from Theorem \ref{Sto-equ-solution} that, for every $\e>0$, there exists a measurable map $\mathcal{G}^{\e}:\bar{\mathbb{M}}\rightarrow D([0,T];H)$ such that for any Poisson random measure $\textbf{n}^{\e ^{-1}}$ on $[0,T]\times {\ZZ}$ with intensity measure $\e^{-1}\lambda_T\otimes\nu$ given on some probability space, $\mathcal{G}^{\e}\big(\e \textbf{n}^{\e ^{-1}}\big)$ is the unique solution of (\ref{Eq SGGLE2}) with $\tilde{\eta}^{\e ^{-1}}$ replaced by $\tilde{\textbf{n}}^{\e ^{-1}}$, here $\tilde{\textbf{n}}^{\e ^{-1}}$ is the compensated Poisson random measure of $\textbf{n}^{\e ^{-1}}$.

To state our main result, we need to introduce the map $\mathcal{G}^0$. Given $\varphi\in \mathbb{S}$ (Recall $\mathbb{S}$ defined in (\ref{S-definition})), we consider the following deterministic integral equation (the skeleton equation):
     \begin{eqnarray}\label{Eq  GGLE1}
         {u}^\varphi(t)
               =                 u(0)
             +   \int_0^tAu^\varphi(s)ds+\int_0^tBu^\varphi(s)ds
               +    \int_0^t\int_{\ZZ}{u}^\varphi(s)g(z)(\varphi(s,z)-1)\nu(dz)ds,
     \end{eqnarray}
where initial value $u(0) \in V$. By Proposition \ref{Main-thm-01} below, Eq. (\ref{Eq  GGLE1}) has a unique $u^{\varphi}\in C([0,T],H)\cap L^2([0,T],V)\cap L^{2\sigma+2}([0,T],L^{2\sigma+2}(D))$.
Recall that for $\varphi\in \mathbb{S}$,
$\nu_T^\varphi(ds,dz)=\varphi(s,z)\nu(dz)ds$.
Let
     \begin{eqnarray}\label{LDP-eq-01}
       \mathcal{G}^0(\nu^\varphi_T):=u^\varphi\ \text{for}\ \varphi\in \mathbb{S}.
     \end{eqnarray}
Then we can define the rate function $I:\mathbb{U}=D([0,T],H)\cap L^2([0,T], V)\cap L^{2\sigma+2}([0,T], L^{2\sigma+2}(D))$\\$\rightarrow [0,\infty]$   as in (\ref{Rate function I}).
The following is the main result of this paper.
\begin{thm}\label{Main-thm-02}
 Suppose that $u(0)\in V$, Condition \ref{Condition Exponential Integrability}
 holds and $2\leq p<2\sigma$, $\sigma>2$ and $0<|\beta|<\frac{\sqrt{2\sigma+1}}{\sigma}$. Then the family $\{u^\e \}_{\e >0}$ satisfies
 a large deviation principle on $D([0,T],H)\cap L^2([0,T], V)\cap L^{2\sigma+2}([0,T], L^{2\sigma+2}(D))$ with rate function $I$.
\end{thm}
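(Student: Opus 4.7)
The strategy is to apply Theorem \ref{LDP-main-new}. With $\mathcal{G}^\e$ given by Theorem \ref{Sto-equ-solution} and $\mathcal{G}^0$ defined in \eqref{LDP-eq-01}, it suffices to verify parts (A) and (B) of Condition \ref{LDP}. These two verifications are precisely what occupies Sections 3--5: Section 3 sets up the skeleton equation \eqref{Eq  GGLE1} and derives the uniform estimates on which everything rests, Section 4 treats (A), and Section 5 treats (B).

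The first step, to be carried out in Section 3, is to establish well-posedness of \eqref{Eq  GGLE1} (Proposition \ref{Main-thm-01}) and to obtain bounds on $u^\varphi$ in $C([0,T];H)\cap L^2([0,T];V)\cap L^{2\sigma+2}([0,T];L^{2\sigma+2}(D))$ that are uniform over $\varphi\in S^N$. The dissipative structure of $A$ yields the $V$-estimate, and the real part of $-(1-i\beta)|u|^{2\sigma}u$ yields the $L^{2\sigma+2}$-estimate, provided the imaginary contribution coming from $F(u)=\lambda_1\cdot\nabla(|u|^2u)+(\lambda_2\cdot\nabla u)|u|^2$ can be absorbed by the dissipation; this is precisely where the hypotheses $\sigma>2$ and $0<|\beta|<\sqrt{2\sigma+1}/\sigma$ enter, along the lines of the nonlinear estimates of \cite{LG}. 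The control forcing $\int_{\ZZ} u^\varphi g(z)(\varphi-1)\nu(dz)$ is handled by the Young-type inequality $ab\le l(a)+e^b-1$ for $a\ge 0$, combined with Condition \ref{Condition Exponential Integrability} and the bound $L_T(\varphi)\le N$, producing bounds uniform in $\varphi\in S^N$.

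To verify (A), take $\varphi_n,\varphi\in S^N$ with $\varphi_n\to\varphi$ in the compact topology of $S^N$, set $w_n:=u^{\varphi_n}-u^\varphi$, and apply the energy identity to $w_n$. The ostensibly bad contributions of $B$ at the level of differences are absorbed by the same algebraic cancellations as in \cite{LG}, leading to a Gronwall inequality with coefficient bounded in terms of the uniform norms obtained in the previous step. The forcing difference $\int_0^t\int_\ZZ u^\varphi g(z)(\varphi_n-\varphi)\nu(dz)ds$ tends to zero by the topology on $S^N$ after truncating $g$ by $g\cdot 1_{K_m}$ with $K_m\uparrow\ZZ$, with Condition \ref{Condition Exponential Integrability} controlling the truncation error uniformly in $n$. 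This yields $u^{\varphi_n}\to u^\varphi$ in the topology of $\mathbb{U}$. For (B), writing $u^{\e,\varphi_\e}:=\mathcal{G}^\e(\e\eta^{\e^{-1}\varphi_\e})$, a Girsanov-type representation gives
\begin{align*}
u^{\e,\varphi_\e}(t)=u(0)&+\int_0^t(Au^{\e,\varphi_\e}(s)+Bu^{\e,\varphi_\e}(s))ds+\int_0^t\int_\ZZ u^{\e,\varphi_\e}(s-)g(z)(\varphi_\e(s,z)-1)\nu(dz)ds\\
&+\e\int_0^t\int_\ZZ u^{\e,\varphi_\e}(s-)g(z)\tilde\eta^{\e^{-1}\varphi_\e}(dz,ds).
\end{align*}
The plan is to first derive moment estimates on $u^{\e,\varphi_\e}$ uniform in $\e$ and $\varphi_\e\in\tilde{\mathbb{A}}^N$, with the stochastic integral controlled by Burkholder--Davis--Gundy together with an exponential martingale estimate that uses both $L_T(\varphi_\e)\le N$ and Condition \ref{Condition Exponential Integrability}; the factor $\e$ in front of the stochastic integral is precisely what makes this term vanish. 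One then considers $w_\e:=u^{\e,\varphi_\e}-u^{\varphi_\e}$, applies It\^o's formula to $\|w_\e\|^2$, and obtains a stochastic Gronwall inequality whose martingale part tends to zero in probability, whence $w_\e\to 0$ in probability in the topology of $\mathbb{U}$.

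The central obstacle, as flagged in the introduction, is that $A+B$ does not satisfy the local monotonicity, coercivity and growth conditions of the framework of \cite{XiongZhai2018}. The resolution, both in the uniform a priori bounds and in the difference estimates that drive (A) and (B), is to exploit the precise cancellation between $-(1-i\beta)|u|^{2\sigma}u$ and the derivative nonlinearity $F(u)$ in the complex inner product of $H$, following \cite{LG}, and to re-derive those estimates in a form that is uniform in $\varphi\in S^N$ and uniform in $(\e,\varphi_\e)\in(0,1]\times\tilde{\mathbb{A}}^N$. Once this analytical core is in place, the appeal to Theorem \ref{LDP-main-new} is routine, and the rate function in \eqref{Rate function I} takes the required form.
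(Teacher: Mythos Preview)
Your overall architecture is correct and matches the paper: reduce to Theorem~\ref{LDP-main-new}, then verify Condition~\ref{LDP}(A) and~(B). Your sketch of~(B) is essentially the paper's Section~5 (Lemma~\ref{Lemma4-2} for uniform moments, then It\^o on $\|\tilde u^\e-u^{\varphi_\e}\|^2$ and BDG on the $O(\e)$ martingale term). The handling of the skeleton equation and the use of the nonlinear cancellations from \cite{LG} are also right.

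The gap is in your treatment of~(A). You propose to run a direct Gronwall argument on $w_n=u^{\varphi_n}-u^\varphi$ and assert that the residual forcing
\[
\int_0^t\int_{\ZZ} u^\varphi(s)\,g(z)\,(\varphi_n(s,z)-\varphi(s,z))\,\nu(dz)\,ds
\]
tends to zero ``by the topology on $S^N$''. But the convergence $\varphi_n\to\varphi$ in $S^N$ is only \emph{weak} (it means $\nu_T^{\varphi_n}\to\nu_T^\varphi$ as measures); in particular neither $\int_0^T\!\int_{\ZZ}|g(z)|\,|\varphi_n-\varphi|\,\nu(dz)\,ds\to 0$ nor $\int_0^T|h_n(s)|^2ds\to 0$ with $h_n(s)=\int_\ZZ g(z)(\varphi_n-\varphi)\,\nu(dz)$ need hold (take $\ZZ$ a point, $g\equiv 1$, $\varphi_n(s)=1+\sin(2\pi n s/T)$). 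In the energy identity the relevant term is $\int_0^t\langle w_n(s),u^\varphi(s)\rangle\,h_n(s)\,ds$, and since $w_n$ itself depends on $n$, no purely weak information on $\varphi_n-\varphi$ closes the estimate.

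The paper resolves this by an additional compactness step that your plan omits. In Section~4 it first re-derives uniform bounds and a $W^{\delta,2}([0,T];H)$ estimate for $u^{\varphi_n}$, then invokes the Aubin--Lions type Lemma~\ref{compact} to obtain strong convergence $u^{\varphi_n}\to u^\varphi$ in $L^2([0,T];V)$ (after identifying the weak limit via the argument of Proposition~\ref{Main-thm-01}). Only then does the direct energy estimate on $X_n=u^{\varphi_n}-u^\varphi$ close: the troublesome $I_6^n$ term is bounded by splitting $[0,T]$ into $A_{n,\e_4}=\{s:\|X_n(s)\|_V>\e_4\}$ and its complement, using $\lambda_T(A_{n,\e_4})\to 0$ (from the strong $L^2([0,T];V)$ convergence) together with the uniform-integrability estimate~\eqref{g_H_3} of Lemma~\ref{3.1}. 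Your plan should add this compactness/identification step before the Gronwall argument; without it the proof of~(A) does not go through.
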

\begin{proof}  According to Theorem \ref{LDP-main-new}, it is sufficient to prove that Condition \ref{LDP} is fulfilled. The verification of Condition \ref{LDP} (A) will be given by Proposition \ref{Prop-1}. Condition \ref{LDP} (B) will be established in Proposition \ref{Prop-2}. The proof of Theorem \ref{Main-thm-02} is complete.
\end{proof}
\section{The skeleton equation}
 In this section, we prove the existence and uniqueness of the solution to the skeleton equation (\ref{Eq  GGLE1}).
\subsection{Wellposedness    to the   skeleton equation}
\begin{proposition}\label{Main-thm-01}
    Suppose Condition \ref{Condition Exponential Integrability}
  holds, $\sigma>2$ and $0<|\beta|<\frac{\sqrt{2\sigma+1}}{\sigma}$. Then for any $u(0)\in V$ and for any $\varphi\in \mathbb{S}$,  (\ref{Eq  GGLE1}) admits  admits a unique solution $u^{\varphi}\in C([0,T],H)\cap L^2([0,T],V)\cap L^{2\sigma+2}([0,T],L^{2\sigma+2}(D))$.
Moreover, for any $N\in \mathbb{N}$, there exists $C_N>0$ such that
     \begin{eqnarray}\label{main-thm-01-ineq}
        \sup_{\varphi\in S^N}\Big(\sup_{s\in[0,T]}\|u^\varphi(s)\|^2+\int_0^T\|\nabla u^\varphi(s)\|^2ds+\int_0^T\|u^\varphi(s)\|^{2\sigma+2}_{2\sigma+2}ds\Big)
                \leq
        C_N.
     \end{eqnarray}
\end{proposition}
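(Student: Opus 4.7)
The plan is to adapt the argument of Lin and Gao \cite{LG}, in its deterministic, controlled incarnation, by a Galerkin approximation followed by energy estimates and a compactness/uniqueness argument. First I would set up the approximation: let $\{e_k\}_{k\ge 1}$ be the orthonormal eigenbasis of $-\Delta$ in $H$ with Dirichlet boundary, let $H_n=\text{span}\{e_1,\dots,e_n\}$ and $\Pi_n$ the orthogonal projection onto $H_n$, and consider the finite-dimensional ODE
\begin{equation*}
\dot u_n(t)=\Pi_n A u_n(t)+\Pi_n B u_n(t)+\Pi_n\!\int_{\mathcal Z}u_n(t)g(z)(\varphi(t,z)-1)\nu(dz),\qquad u_n(0)=\Pi_n u(0).
\end{equation*}
Standard ODE theory gives a local $H_n$-valued solution; the a priori estimates below will extend it globally.

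The core of the proof is the energy estimate. Taking the inner product with $u_n$ in $H$ and using $\text{Re}\langle(1+i\alpha)\Delta u_n,u_n\rangle=-\|\nabla u_n\|^2$ and $\text{Re}\langle-(1-i\beta)|u_n|^{2\sigma}u_n,u_n\rangle=-\|u_n\|_{2\sigma+2}^{2\sigma+2}$, one obtains
\begin{equation*}
\tfrac{1}{2}\tfrac{d}{dt}\|u_n\|^2+\|\nabla u_n\|^2+\|u_n\|_{2\sigma+2}^{2\sigma+2}
=\gamma\|u_n\|^2+\langle F(u_n),u_n\rangle+\|u_n\|^2\,\text{Re}\!\int_{\mathcal Z}g(z)(\varphi(t,z)-1)\nu(dz).
\end{equation*}
For $\langle F(u_n),u_n\rangle$ I would integrate by parts and apply H\"older and Young inequalities exactly as in \cite{LG}, using the hypothesis $0<|\beta|<\sqrt{2\sigma+1}/\sigma$ to absorb the resulting terms into $\tfrac{1}{2}\|\nabla u_n\|^2+\tfrac{1}{2}\|u_n\|_{2\sigma+2}^{2\sigma+2}$, leaving an $L^\infty$-in-time tail plus a multiple of $\|u_n\|^2$. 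For the control term I would invoke the standard lemma (Lemma 3.4 in Budhiraja--Dupuis--Maroulas \cite{Budhiraja-Dupuis-Maroulas.}) based on the variational inequality $ab\le e^{\delta a}+\tfrac{1}{\delta}\ell(b)$: under Condition \ref{Condition Exponential Integrability}, for $\varphi\in S^N$,
\begin{equation*}
\int_0^T\!\int_{\mathcal Z}|g(z)|\,|\varphi(s,z)-1|\,\nu(dz)\,ds\le C(N,\delta,g)=:C_N,
\end{equation*}
which gives a multiplier $\|u_n\|^2 h(t)$ with $h\in L^1([0,T])$, uniformly in $\varphi\in S^N$ and $n$. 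Gr\"onwall's lemma then yields the uniform bound
\begin{equation*}
\sup_{n}\sup_{\varphi\in S^N}\Big(\sup_{t\in[0,T]}\|u_n(t)\|^2+\int_0^T\|\nabla u_n(s)\|^2ds+\int_0^T\|u_n(s)\|^{2\sigma+2}_{2\sigma+2}ds\Big)\le C_N,
\end{equation*}
which is exactly \eqref{main-thm-01-ineq} at the level of approximations.

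Passage to the limit is by weak/weak-$*$ compactness in $L^\infty(0,T;H)\cap L^2(0,T;V)\cap L^{2\sigma+2}(0,T;L^{2\sigma+2}(D))$, combined with an Aubin--Lions type strong compactness in $L^2(0,T;H)$ once one shows $\dot u_n$ bounded in $L^{p'}(0,T;V^\ast+L^{(2\sigma+2)'})$; the a priori estimate on $B u_n$ is immediate from the $L^{2\sigma+2}$ control and the estimates for $F$, while the control term is bounded in $L^1(0,T;H)$ thanks to the same lemma applied to $|g|$. Strong $L^2(0,T;H)$ convergence of a subsequence lets one identify the nonlinear terms $|u^\varphi|^{2\sigma}u^\varphi$ and $F(u^\varphi)$ in the limit. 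Uniqueness is proved by writing the equation for the difference $w=u^\varphi_1-u^\varphi_2$ of two solutions, testing with $w$, and using the local monotonicity bound for $B$ established in \cite{LG} together with Gr\"onwall applied to $\|w(t)\|^2$ with coefficient $\text{Re}\int g(z)(\varphi-1)\nu(dz)\in L^1([0,T])$.

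The main obstacle I anticipate is twofold: (i) handling $\langle F(u),u\rangle$ and keeping the absorbed terms sharp enough that the $|\beta|$-threshold is respected, which is the delicate Lin--Gao computation, and (ii) ensuring that all bounds are uniform over $\varphi\in S^N$, for which the exponential integrability of $g$ and the representation $\ell(\varphi)$ must be used through the $ab\le e^{\delta a}+\delta^{-1}\ell(b)$ trick rather than through any probabilistic argument, so that everything is deterministic and the constant $C_N$ depends only on $N$, $\delta$, $g$, $T$, $\gamma$, $\sigma$, $\alpha$, $\beta$, $\lambda_1$, $\lambda_2$ and $\|u(0)\|_V$.
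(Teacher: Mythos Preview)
Your overall strategy---Galerkin approximation, $H$-level energy estimate via Lemma~3.4 of \cite{Budhiraja-Chen-Dupuis} for the control term, compactness, identification of limits, uniqueness by Gr\"onwall---matches the paper's architecture, and the $H$-estimate you sketch is exactly the paper's first a~priori bound. However, you are missing a second, essential a~priori estimate: a $V$-level bound on $\|\nabla u_n\|^p$ (for $2\le p<2\sigma$) of the Lin--Gao type, obtained by testing with $-\Delta u_n$ and using their inequality~(3.17). The paper proves this separately and it drives everything that follows.

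Without that estimate your argument breaks in two places. First, the identification of the derivative nonlinearity $F(u)=\lambda_1\!\cdot\!\nabla(|u|^2u)+(\lambda_2\!\cdot\!\nabla u)|u|^2$: strong $L^2(0,T;H)$ convergence of $u_n$ (all that Aubin--Lions gives from the $H$-energy alone) does not by itself yield convergence of products involving $\nabla u_n$; the paper instead uses the $L^\infty(0,T;V)\cap L^2(0,T;H^2)$ bound coming from the gradient estimate to obtain strong $L^2(0,T;V)$ compactness via a $W^{\delta,2}$ argument, and then runs a Minty--Browder monotonicity trick with weight $e^{-r(t)}$ to identify the nonlinear limit. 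Second, uniqueness: the local monotonicity inequality from \cite{LG} that you invoke produces a Gr\"onwall coefficient $r'(t)$ containing powers $\|\nabla u^\varphi(t)\|^{q}$ with $q\in(2,2\sigma)$, so $\int_0^T r'(t)\,dt<\infty$ requires $u^\varphi\in L^\infty(0,T;V)$, not merely $L^2(0,T;V)$. Your $H$-level estimate does not provide this. Add the $\|\nabla u_n\|^p$ estimate (testing the Galerkin system against $-\Delta u_n$, using the Lin--Gao inequality for the cross terms, and again (\ref{g_H_2}) for the control term), and the rest of your outline can be completed along the paper's lines.
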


 We now give some lemmas for the proof of Proposition \ref{Main-thm-01}. The following two lemmas can be found in \cite{Budhiraja-Chen-Dupuis, YZZ2015}.

\begin{lemma}\label{3.1}
Assume  Condition (\ref{Condition Exponential Integrability}) holds. \\
\begin{itemize}
\item[(1).]\cite[Lemma 3.4]{Budhiraja-Chen-Dupuis} For  every $N\in\mathbb{N}$,
\begin{eqnarray}\label{g_H_1}
C^0_1:=\sup_{\varphi\in S^N}\int_{{\ZZ}_T}|g(z)|^2(\varphi(s,z)+1)\nu(dz)ds<\infty,
\end{eqnarray}

\begin{eqnarray}\label{g_H_2}
C^0_2:=\sup_{\varphi\in S^N}\int_{{\ZZ} _T}|g(z)|(\varphi(s,z)-1)\nu(dz)ds<\infty.
\end{eqnarray}

\item[(2).]\cite[Remark 2]{YZZ2015} For every $\eta>0$, there exists $\delta>0$ such that for any $A\subset[0,T]$ satisfying $\lambda_T(A)<\delta$,
\begin{eqnarray}\label{g_H_3}
\sup_{\varphi\in S^N}\int_A\int_{\ZZ}|g(z)||\varphi(s,z)-1|\nu(dz)ds\leq\eta.
\end{eqnarray}
\end{itemize}
\end{lemma}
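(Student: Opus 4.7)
The three estimates are classical tools in the Budhiraja--Dupuis machinery and all flow from a single device, so my plan is to organize the argument around one pointwise inequality and then treat each of the three parts in turn. The device is the Fenchel--Young inequality for the convex-conjugate pair $(e^x-1,\ \ell(y))$, namely
$$xy\;\leq\;\frac{1}{\alpha}\bigl(e^{\alpha x}-1\bigr)\;+\;\frac{1}{\alpha}\,\ell(y),\qquad x,y\geq 0,\ \alpha>0,$$
obtained by taking the Legendre transform of $e^x-1$ on $[0,\infty)$ and rescaling. The point is that integrating this against $\nu_T$ with $y=\varphi(s,z)$ converts the $\ell(y)$-term into something controlled by $L_T(\varphi)\leq N$, while the exponential term is handled by Condition \ref{Condition Exponential Integrability} once the $z$-integration is localized to a set of finite $\nu$-measure.

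For \eqref{g_H_1}, I would apply the inequality with $x=|g(z)|^2$ and $\alpha=\delta$, integrate over $\ZZ_T$, and bound the two pieces by $\delta^{-1}\int_{\ZZ_T}(e^{\delta|g|^2}-1)\,d\nu_T$ and $\delta^{-1}L_T(\varphi)\leq N/\delta$ respectively. The $+1$ in $\varphi+1$ gives the extra term $T\int_{\ZZ}|g|^2\,d\nu$, which I would bound by $(T/\delta)\int_{\ZZ}(e^{\delta|g|^2}-1)\,d\nu$ using the elementary pointwise estimate $r\leq\delta^{-1}(e^{\delta r}-1)$ for $r\geq 0$. For \eqref{g_H_2}, I would use $|g|(\varphi-1)\leq |g|\varphi$ together with the Fenchel--Young bound $|g|\varphi\leq(e^{|g|}-1)+\ell(\varphi)$ (i.e.\ take $\alpha=1$, $x=|g|$), and control $e^{|g|}-1$ pointwise by $C_\delta(e^{\delta|g|^2}-1)+C_\delta'$ on $\{|g|\geq 1\}$ and by a linear function of $|g|^2$ on $\{|g|\leq 1\}$, thereby reducing to quantities already estimated in (1).

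Statement \eqref{g_H_3} is the delicate one because the bound must be uniform in $\varphi\in S^N$ and must vanish with $\lambda_T(A)$. My plan is a two-region split at a cutoff $M\geq e^2$: on $\{\varphi>M\}$, the superlinearity of $\ell$ yields $\varphi\leq\ell(\varphi)/(\log M-1)$, so
$$\int_A\!\!\int_{\{\varphi>M\}}|g|\,|\varphi-1|\,\nu(dz)\,ds\;\leq\;\frac{1}{\log M-1}\int_{\ZZ_T}|g|\,\ell(\varphi)\,d\nu_T\;\leq\;\frac{C(N)}{\log M-1},$$
uniformly in $A\subset[0,T]$ and $\varphi\in S^N$, where the last bound uses (1)--(2) and another application of Fenchel--Young on the product $|g|\ell(\varphi)$. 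On the complementary region $\{\varphi\leq M\}$, the integrand is dominated by $(M+1)|g|$, so the contribution is at most $(M+1)\lambda_T(A)\int_{\ZZ}|g|\,d\nu$, which is $O(\lambda_T(A))$ since $|g|\in L^1(\nu)$ by the same exponential bound. Given $\eta>0$, I would first pick $M$ large enough to force the tail part below $\eta/2$, then pick $\delta>0$ small so that the bulk part is $\leq\eta/2$ whenever $\lambda_T(A)<\delta$.

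The main obstacle is the bookkeeping around the fact that Condition \ref{Condition Exponential Integrability} is stated only on sets of finite $\nu$-measure, not globally. I would circumvent this by always splitting $\ZZ=\{|g|\geq 1\}\cup\{|g|<1\}$: on $\{|g|\geq 1\}$ one has $\nu(\{|g|\geq 1\})<\infty$ (forced by the same Condition, since $e^{\delta|g|^2}\geq e^\delta$ there would otherwise fail to integrate), so the hypothesis applies directly; on $\{|g|<1\}$ the integrands $|g|\varphi$, $|g|^2\varphi$, and $|g||\varphi-1|$ are all dominated by the corresponding quantities obtained by replacing $|g|$ or $|g|^2$ by $(e^{\delta|g|^2}-1)/\delta$, so the Fenchel--Young estimates go through without needing global exponential integrability.
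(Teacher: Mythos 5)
Your overall strategy -- the Legendre--Young duality between $e^{x}-1$ and $l(y)$, integrated against $\nu_T$ so that the $l$-term is absorbed by $L_T(\varphi)\leq N$, with a splitting according to the sizes of $\varphi$ and $|g|$ -- is exactly the technique behind the results the paper invokes; note that the paper gives no proof of Lemma \ref{3.1} at all, but simply quotes \cite[Lemma 3.4]{Budhiraja-Chen-Dupuis} and \cite[Remark 2]{YZZ2015}, so reconstructing the argument is reasonable and your skeleton matches theirs.

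There are, however, genuine gaps, all concentrated in your handling of the possibly infinite-mass part of $\nu$. First, the claim that Condition \ref{Condition Exponential Integrability} forces $\nu(\{|g|\geq 1\})<\infty$ is false: the condition only constrains integrals over sets $E$ that already have finite $\nu$-measure, so it says nothing about $\{|g|\geq1\}$. Indeed, for $g\equiv 1$ on a space with $\nu(\ZZ)=\infty$ the condition holds, $\varphi\equiv1\in S^N$, and \eqref{g_H_1} itself fails; so the lemma cannot be deduced from Condition \ref{Condition Exponential Integrability} alone. The cited references (and, implicitly, the well-posedness framework of \cite{LG} used here) additionally assume $\int_{\ZZ}|g|^2\,d\nu<\infty$, and it is Chebyshev applied to this square-integrability -- not the exponential condition -- that yields $\nu(\{|g|\geq K\})<\infty$; the same assumption also disposes of the ``$+1$'' term in \eqref{g_H_1} and of the small-$|g|$ region. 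Second, ``$|g|\in L^1(\nu)$ by the same exponential bound'' is also false (take $g(z)=1/z$ on $[1,\infty)$ with Lebesgue $\nu$: Condition \ref{Condition Exponential Integrability} and $g\in L^2(\nu)$ hold, but $g\notin L^1(\nu)$), so your bulk-region estimate $(M+1)\lambda_T(A)\int_{\ZZ}|g|\,d\nu$ for \eqref{g_H_3}, and the term $\int_{\ZZ_T}|g|\,d\nu_T$ you implicitly need when replacing $|g||\varphi-1|$ by $|g|\varphi$ in \eqref{g_H_2}, are unavailable. The standard repair, which is the argument of \cite{YZZ2015}, treats $\{|\varphi-1|\leq1\}$ by Cauchy--Schwarz in $L^2(\nu)$ together with the quadratic lower bound $l(\varphi)\geq c\,|\varphi-1|^2$ there (this also produces the factor $\lambda_T(A)$ needed for \eqref{g_H_3}), and treats $\{|\varphi-1|>1\}$ via $|\varphi-1|\leq c\,l(\varphi)$ combined with a split $\{|g|\leq K\}\cup\{|g|>K\}$, the exponential condition being applied only on $\{|g|>K\}$, which has finite $\nu$-measure once $g\in L^2(\nu)$. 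Third, your bound $\int_{\ZZ_T}|g|\,l(\varphi)\,d\nu_T\leq C(N)$ is asserted through ``another application of Fenchel--Young'' but does not follow from it for unbounded $g$; it requires the same $\{|g|\leq K\}/\{|g|>K\}$ splitting. So the architecture is right, but as written the proof rests on two incorrect deductions from Condition \ref{Condition Exponential Integrability} and one unjustified estimate; stating and using $g\in L^1(\nu)\cap L^2(\nu)$ (or at least $g\in L^2(\nu)$ with the Cauchy--Schwarz variant above) would make it correct.
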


\begin{lemma}\label{3.11}\cite[Lemma 3.11]{Budhiraja-Chen-Dupuis} For any $N\in \mathbb{N}$, let $\varphi_n,~\varphi\in S^N$ be such that $\varphi_n\rightarrow\varphi$ as $n\rightarrow\infty$.
Let $h:{\ZZ}\rightarrow\mathbb{R}$ be a measurable function such that
\begin{eqnarray*}
\int_{\ZZ}|h(z)|^2\nu(dz)<\infty
\end{eqnarray*}
and  for all $\delta\in(0,\infty)$
\begin{eqnarray*}
\int_{E}\exp(\delta|h(z)|)\nu(dz)<\infty,
\end{eqnarray*}
for all  $E\in \mathcal{B}({\ZZ})$ satisfying $\nu(E)<\infty$.
 Then
\begin{eqnarray*}
\lim_{n\rightarrow\infty}\int_{{\ZZ}_T}h(z)(\varphi_n(s,z)-1)\nu(dz)ds
=\int_{{\ZZ}_T}h(z)(\varphi(s,z)-1)\nu(dz)ds.
\end{eqnarray*}
\end{lemma}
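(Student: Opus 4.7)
The plan is a standard truncation argument. Since convergence $\varphi_n\to\varphi$ in $S^N$ means vague convergence of the associated measures $\nu_T^{\varphi_n}\to\nu_T^\varphi$ in $\mathcal{M}_{FC}(\ZZ_T)$, I would (i) approximate $h$ by a bounded, compactly supported $h_{M,R}$, (ii) pass to the limit on this main term using the topology of $S^N$ enhanced by uniform integrability, and (iii) control the remainder uniformly in $\varphi\in S^N$.

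Two structural facts about elements of $S^N$ drive step (iii). First, since $l\ge l(1/2)=(1-\log 2)/2>0$ off the interval $[1/2,2]$, the entropy bound $L_T(\varphi)\le N$ gives
\begin{equation*}
\nu_T\bigl(\{\varphi<1/2\}\cup\{\varphi>2\}\bigr)\;\le\;N/l(1/2)<\infty.
\end{equation*}
Second, Taylor's formula applied to $l(r)=r\log r-r+1$ about $r=1$ (noting $l''(r)=1/r\ge 1/2$ on $[1/2,2]$) yields $(r-1)^2\le 4\,l(r)$ on $[1/2,2]$. Combined with the Young-type inequality $ab\le\alpha^{-1}(e^{\alpha a}-1)+\alpha^{-1}l(b)$ for $a,b\ge 0$, $\alpha>0$ (Legendre duality between $l$ and $l^*(s)=e^s-1$), the $L^2$-integrability of $h$, and the exponential integrability of $h$ on finite-$\nu$ sets, these produce the uniform tail estimate
\begin{equation*}
\lim_{M,R\to\infty}\;\sup_{\varphi\in S^N}\int_{\ZZ_T}|h-h_{M,R}|\,|\varphi-1|\,d\nu_T\;=\;0,
\end{equation*}
where $h_{M,R}:=h\,\mathbf{1}_{\{|h|\le M\}\cap K_R}$ and $\{K_R\}$ is the compact exhaustion of $\ZZ$. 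The key splitting is to decompose $\int|h|\mathbf{1}_E|\varphi-1|\,d\nu_T$ over $\{1/2\le\varphi\le 2\}$ (bounded by $2\sqrt{TN}\,\|h\mathbf{1}_E\|_{L^2(\nu)}$ via Cauchy--Schwarz and the quadratic bound) and over $\{\varphi<1/2\}\cup\{\varphi>2\}$ (a set of $\nu_T$-measure at most $N/l(1/2)$, controlled by the Young inequality together with the exponential integrability hypothesis applied on this finite-$\nu_T$ set).

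For step (ii), the entropy bound $\sup_n L_T(\varphi_n)\le N$ combined with the de la Vallée--Poussin criterion (applied to the super-linear $r\log r$) implies that $\{\varphi_n\}$ is uniformly integrable with respect to $\nu_T$ on every set of finite $\nu_T$-measure. Together with the vague convergence of $\nu_T^{\varphi_n}$ to $\nu_T^\varphi$, this upgrades (via Dunford--Pettis) to weak convergence of $\varphi_n$ to $\varphi$ in $L^1([0,T]\times K_R,\nu_T)$ for every $R$. Since $h_{M,R}\in L^\infty([0,T]\times K_R,\nu_T)$, pairing gives $\int h_{M,R}(\varphi_n-1)\,d\nu_T\to\int h_{M,R}(\varphi-1)\,d\nu_T$, and the triangle inequality combined with the uniform tail bound yields the claim upon sending first $n\to\infty$ and then $M,R\to\infty$.

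The main technical obstacle lies in step (iii): the subtle interaction between the tail of $h$ on $K_R^c$ (where $\nu$ may be infinite, so that $h\in L^2(\nu)$ does not imply $h\in L^1(\nu)$) and the possibly unbounded density $\varphi$. This is exactly where the two structural properties of $S^N$ are essential, and the argument follows the approach of \cite{Budhiraja-Chen-Dupuis}. Step (ii) is comparatively routine once the uniform integrability of $\{\varphi_n\}$ on finite-$\nu_T$-measure sets is in hand.
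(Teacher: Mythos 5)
Your proposal is essentially correct, but note that the paper itself offers no proof of this statement: it is quoted verbatim, with citation, as Lemma 3.11 of Budhiraja--Chen--Dupuis, so the only comparison available is with that reference, whose argument your sketch in fact reproduces (truncation of $h$, the entropy--duality bounds $l(r)\ge l(1/2)$ off $[1/2,2]$, $(r-1)^2\le 4\,l(r)$ on $[1/2,2]$, the Legendre inequality $ab\le \alpha^{-1}(e^{\alpha a}-1)+\alpha^{-1}l(b)$, and uniform-integrability/weak-$L^1$ convergence for the truncated main term). The individual estimates you quote are correct as stated. One place to phrase more carefully in a full write-up: the exponential-integrability hypothesis is only assumed for subsets $E\subseteq\mathcal{Z}$ with $\nu(E)<\infty$, whereas your tail estimate invokes it ``on this finite-$\nu_T$ set'', i.e.\ on the $\varphi$-dependent subset $\{\varphi<1/2\}\cup\{\varphi>2\}$ of $[0,T]\times\mathcal{Z}$; the standard fix is to route the exponential bound through fixed sets such as $\{|h|>M_0\}$, which have finite $\nu$-measure by Chebyshev since $h\in L^2(\nu)$, and to treat the region $\{|h|\le M_0\}\cap K_R^c$ by the elementary bound $e^{\alpha|h|}-1\le \alpha e^{\alpha M_0}|h|$ together with Cauchy--Schwarz and the finite bound $N/l(1/2)$ on the measure of the bad set. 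Similarly, identifying the weak-$L^1$ limit on $[0,T]\times K_R$ from vague convergence of $\nu_T^{\varphi_n}$ requires a routine density/boundary argument. These are fixable details rather than gaps; your plan, including the correct identification of where the two structural properties of $S^N$ are needed, is sound.
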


Given $p>1$, $\delta\in (0,1)$, let $W^{\delta,p}([0,T];H)$ be the Sobolev space of all $u\in L^p([0,T];H)$ such that
$$
\int^T_0\int^T_0\frac{\|u(t)-u(s)\|^p}{|t-s|^{1+\delta p}}dtds<\infty,
$$
endowed with the norm

$$
\|u\|^p_{W^{\delta,p}([0,T];H)}=\int^T_0\|u(t)\|^pdt+\int^T_0\int^T_0\frac{\|u(t)-u(s)\|^p}{|t-s|^{1+\delta p}}dtds.
$$
By the criteria for compactness proved in Lions \cite[Section 5, Chapter I]{L1969}   and Temam \cite[Section 13.3]{T1983},   we have the following lemma.

\begin{lemma}\label{compact}
For any $p\in(1,\infty)$ and $\delta\in(0,1)$, let $\Lambda$ be the space
$$\Lambda=L^p([0,T];V)\cap W^{\delta,p}([0,T];V^*)$$
endowed with the natural norm. Then the embedding of $\Lambda$ in $L^{p}([0,T];H)$ is compact.
\end{lemma}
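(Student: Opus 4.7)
The plan is to prove this as an instance of the Aubin-Lions-Simon compactness theorem for the Gelfand triple $V \hookrightarrow H \hookrightarrow V^*$. The two structural inputs are that $V = H_0^1(D) \hookrightarrow H = L^2(D)$ is compact (by Rellich-Kondrachov) and $H \hookrightarrow V^*$ is continuous, and that fractional time regularity in the weaker space $V^*$ yields equicontinuity under small time translations.

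The first reduction is via the Ehrling-Lions interpolation inequality: for every $\eta>0$ there exists $C_\eta>0$ such that
$$
\|v\|_H \le \eta\, \|v\|_V + C_\eta\, \|v\|_{V^*}, \qquad \forall\, v\in V,
$$
which follows from a standard contradiction argument that uses only the compactness of $V\hookrightarrow H$. Raising to the $p$-th power and integrating in time gives, for a bounded sequence $\{u_n\}\subset \Lambda$ with $\|u_n\|_\Lambda \le M$,
$$
\|u_n-u_m\|^p_{L^p([0,T];H)} \le C\eta^p M^p + C_\eta^p\, \|u_n-u_m\|^p_{L^p([0,T];V^*)}.
$$
Thus it suffices to extract a subsequence converging strongly in $L^p([0,T];V^*)$; an $\eta\to 0$ diagonal argument then upgrades the conclusion to $L^p([0,T];H)$.

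For the $L^p([0,T];V^*)$-compactness I would use a time mollification. Extending $u_n$ suitably to $\rr$ and writing $u_n^\e := \rho_\e * u_n$ for a standard mollifier $\rho_\e$ supported in $[-\e,\e]$, two facts are needed. \emph{(a)} A uniform approximation bound $\sup_n\|u_n^\e - u_n\|_{L^p([0,T];V^*)} \to 0$ as $\e\to 0$, obtained by combining Jensen's inequality for the convolution with a Fubini manipulation of the Slobodeckij double integral defining $\|\cdot\|_{W^{\delta,p}([0,T];V^*)}$; this extracts a rate of order $\e^{\delta}$ from the fractional regularity. \emph{(b)} Relative compactness of the mollified family $\{u_n^\e\}_n$ for each fixed $\e>0$, obtained by Arzelà-Ascoli: the $L^p([0,T];V)$ bound gives $\|u_n^\e(t)\|_V \le C_\e M$ uniformly in $t$ and $n$, so the compact embedding $V\hookrightarrow H$ delivers pointwise precompactness in $H$, while the smoothness from mollification supplies time-equicontinuity. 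A diagonal extraction combining (a) and (b) produces the desired convergent subsequence in $L^p([0,T];V^*)$.

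The main technical obstacle is the uniform-in-$n$ approximation estimate in (a): one must extract a quantitative rate of convergence of mollifications that does not depend on the individual function, from mere boundedness in $W^{\delta,p}([0,T];V^*)$. This is precisely where the fractional Sobolev structure is essential and is the content underlying the compactness theorems in Lions \cite{L1969}, Chapter I, Section 5, and Temam \cite{T1983}, Section 13.3, whose arguments we follow directly.
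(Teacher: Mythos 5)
Your proof is correct: the paper gives no argument for this lemma, citing only the compactness criteria of Lions and Temam, and your scheme (Ehrling interpolation for the Gelfand triple $V\hookrightarrow H\hookrightarrow V^*$, reduction to compactness in $L^p([0,T];V^*)$, then time mollification with the uniform rate $\e^{\delta}$ extracted from the Slobodeckij seminorm plus Arzel\`a--Ascoli and a diagonal extraction) is exactly the standard proof of that criterion in its general-$p$ Slobodeckij form (Simon's argument). The only point glossed over is the extension in time: extend by reflection rather than by zero (or mollify only in the interior), since zero-extension need not preserve the $W^{\delta,p}$ bound when $\delta p\ge 1$; with that fixed, the Fubini computation in your step (a) indeed gives $\sup_n\|u_n^{\e}-u_n\|_{L^p([0,T];V^*)}\le C\e^{\delta}$ and the argument goes through.
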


\subsection{Finite-dimensional approximations}

We use the Galerkin method to prove the existence of the solution to (\ref{Eq  GGLE1}).

Suppose that $\{e_i:i\in \mathbb{N}\}\subset D(A)$ is an orthonormal basis of $H$ such that ${\rm span}\{e_i:i\in \mathbb{N}\}$ is dense in $V$. Denote $H_n:={\rm span}\{e_1,...e_n\}$.
 Let $P_n$ be the orthogonal projection onto $H_n$ in $H$, i.e.,
 $$P_nx:=\sum\limits_{i=1}^{n}\langle x,e_i\rangle \,e_i.$$
 For simplicity, we denote
 \begin{eqnarray}\label{1-3}
 G(u):=A(u)+B(u),
 \end{eqnarray}
 where $A$ and $B$ are defined in (\ref{1-1}) and (\ref{1-2}), respectively. We consider the following equation:
\begin{eqnarray}\label{4}
u^\varphi_n(t)=u^\varphi_n(0)+\int_0^tP_nG(u^\varphi_n(s))ds+\int_0^t\int_{\ZZ} u^\varphi_n(s)g(z)(\varphi(s,z)-1)\nu(dz)ds,\ \
\end{eqnarray}
where $u^\varphi_n(0)=P_nu(0)$. By using the method of \cite{AlbeverioBWu2010}, we know that Eq. \eqref{4} has a unique solution.

 We first give some estimates which are useful to prove Theorem \ref{Main-thm-01}.
\begin{lemma}\label{Estimate-lemm-01}
Suppose Condition \ref{Condition Exponential Integrability}
  holds, $\sigma>2$ and $0<|\beta|<\frac{\sqrt{2\sigma+1}}{\sigma}$. Then for any $u(0)\in V$ and for any $\varphi\in \mathbb{S}$, there exists a constant $C_{3,1}\in(0,\infty)$ such that
\begin{eqnarray}
\sup\limits_{n\geq1}\left(\sup\limits_{0\leq s\leq T}\|u^\varphi_n(s)\|^2+\int_0^T\|\nabla u^\varphi_n(s)\|^2ds+\int_0^T\|u^\varphi_n(s)\|_{2\sigma+2}^{2\sigma+2}ds\right)\leq C_{3,1}(\|u(0)\|^2+1).\nonumber
\end{eqnarray}
\end{lemma}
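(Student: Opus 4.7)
The plan is to apply the classical chain rule to the finite-dimensional ODE \eqref{4}, isolate the dissipative contributions in $\langle u,G(u)\rangle$, control the derivative nonlinearity by the ``good'' terms using the hypotheses on $\sigma$ and $\beta$, and close the estimate via Gronwall's inequality together with the bound from Lemma \ref{3.1}(1). Since $u^\varphi_n(s)\in H_n$ and $P_n$ is the orthogonal projection, so that $\langle u^\varphi_n, P_n v\rangle = \langle u^\varphi_n, v\rangle$, differentiating $t\mapsto\|u^\varphi_n(t)\|^2$ via \eqref{4} gives
\[
\|u^\varphi_n(t)\|^2 = \|P_n u(0)\|^2 + 2\int_0^t\langle u^\varphi_n,G(u^\varphi_n)\rangle ds + 2\int_0^t\int_{\ZZ} g(z)\|u^\varphi_n(s)\|^2(\varphi(s,z)-1)\nu(dz)ds,
\]
where the second integral uses that $g$ is real-valued so $\langle u,u g(z)\rangle = g(z)\|u\|^2$. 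Direct computations using the Dirichlet boundary condition and the real-part projection in $\langle\cdot,\cdot\rangle$ give $\langle u,Au\rangle = -\|\nabla u\|^2$, $\langle u,-(1-i\beta)|u|^{2\sigma}u\rangle = -\|u\|_{2\sigma+2}^{2\sigma+2}$, and $\langle u,\gamma u\rangle = \gamma\|u\|^2$.

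The essential difficulty lies in the term $\langle u,F(u)\rangle$ coming from the derivative nonlinearity $F(u)=\lambda_1\cdot\nabla(|u|^2u)+(\lambda_2\cdot\nabla u)|u|^2$. After integrating the first piece by parts (the boundary contribution vanishes by the Dirichlet condition) and rewriting the second, one bounds the resulting cubic-times-gradient integrals by pairing $|u|^{\sigma+1}$ with $|\nabla u|$ and $|u|^{2-\sigma}$ via H\"older's inequality, then applying Young's inequality to split into $L^{2\sigma+2}$ and $L^2$ pieces, with the remaining factor controlled by a Gagliardo--Nirenberg interpolation. The hypotheses $\sigma>2$ and $0<|\beta|<\sqrt{2\sigma+1}/\sigma$ are exactly what make the coefficients in this chain of inequalities small enough that, following the argument of Lin--Gao \cite{LG}, one obtains an estimate of the form
\[
|\langle u,F(u)\rangle|\leq \tfrac{1}{2}\|\nabla u\|^2 + \tfrac{1}{2}\|u\|_{2\sigma+2}^{2\sigma+2} + C\|u\|^2 + C,
\]
with $C$ depending only on $\lambda_1,\lambda_2,\sigma,\gamma$ (in particular uniform in $n$ and $\varphi$).

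Absorbing the $\tfrac12$-terms into the left-hand side and writing $\Phi(s):=C+2\int_{\ZZ}|g(z)||\varphi(s,z)-1|\nu(dz)$, one obtains
\[
\|u^\varphi_n(t)\|^2 + \int_0^t\|\nabla u^\varphi_n\|^2 ds + \int_0^t\|u^\varphi_n\|_{2\sigma+2}^{2\sigma+2}ds \leq \|u(0)\|^2 + CT + \int_0^t\Phi(s)\|u^\varphi_n(s)\|^2 ds.
\]
By Lemma \ref{3.1}(1) the quantity $\int_0^T\Phi(s)ds$ is uniformly bounded over $\varphi\in S^N$ by a constant depending only on $N$ and $T$, so Gronwall's inequality yields $\sup_{s\leq T}\|u^\varphi_n(s)\|^2\leq C_{3,1}(\|u(0)\|^2+1)$; re-inserting this bound into the integrated inequality yields the matching bounds on $\int_0^T\|\nabla u^\varphi_n\|^2 ds$ and $\int_0^T\|u^\varphi_n\|_{2\sigma+2}^{2\sigma+2}ds$. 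The principal obstacle throughout is the absorption estimate for $\langle u,F(u)\rangle$; every other step is a standard energy-type computation once that bound is in hand.
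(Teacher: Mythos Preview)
Your proof is correct and follows the same strategy as the paper: derive the energy identity from the chain rule, invoke the estimate (3.6) of \cite{LG} to absorb $\langle u,F(u)\rangle$ into a fraction of the dissipative terms plus $C\|u\|^2$, and close via Gronwall's inequality together with \eqref{g_H_2} from Lemma~\ref{3.1}. Two minor inaccuracies worth flagging: your sketched H\"older splitting with the factor $|u|^{2-\sigma}$ is ill-defined for $\sigma>2$, and the hypothesis on $\beta$ plays no role in this particular estimate (it is needed only in Lemma~\ref{Estimate-lemm-03}); but since you ultimately defer to \cite{LG} for the $F$-term bound, neither affects the validity of the argument.
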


\begin{proof}
Applying the chain rule to $\|u_n(t)\|^2$ and taking the real part, we obtain
\begin{eqnarray}\label{7}
\|u^\varphi_n(t)\|^2
&=&\|u^\varphi_n(0)\|^2+2\,\textmd{Re}\int_0^t \langle u^\varphi_n(s),(1+i\alpha)\Delta u^\varphi_n(s)\big>ds\nonumber\\
&&+2\int_0^t\big<u^\varphi_n(s),\gamma u^\varphi_n(s)\rangle ds
-2\,\textmd{Re}\int_0^t\langle u^\varphi_n(s),(1-i\beta)|u^\varphi_n(s)|^{2\sigma}u^\varphi_n(s)\rangle ds\nonumber\\
&&+2\,\textmd{Re}\int_0^t\langle u^\varphi_n(s),\lambda_1\cdot \nabla (|u^\varphi_n(s)|^2u^\varphi_n(s))+(\lambda_2\cdot \nabla u^\varphi_n(s))|u^\varphi_n(s)|^2\rangle ds\nonumber\\
&&+2\,\textmd{Re}
\int_0^t\int_\mathcal{Z}\langle u^\varphi_n(s),u^\varphi_n(s)g(z)(\varphi(s,z)-1)\rangle \nu(dz)ds\nonumber\\
&=&\|u^\varphi_n(0)\|^2-2\,\int_0^t\|\nabla u^\varphi_n(s)\|^2ds+2\,\gamma\int_0^t\|u^\varphi_n(s)\|^2ds\nonumber\\&&+2\,\textmd{Re}\int_0^t\langle u^\varphi_n(s),\lambda_1\cdot \nabla (|u^\varphi_n(s)|^2u^\varphi_n(s))+(\lambda_2\cdot \nabla u^\varphi_n(s))|u^\varphi_n(s)|^2\rangle ds\nonumber\\
&&-2\,\int_0^t\|u^\varphi_n(s)\|_{2\sigma+2}^{2\sigma+2}ds
+2\,\textmd{Re}
\int_0^t\int_\mathcal{Z}\langle u^\varphi_n(s),u^\varphi_n(s)g(z)(\varphi(s,z)-1)\rangle \nu(dz)ds\nonumber.
\end{eqnarray}
Then, by using the same technique in the proof of  (3.6) in \cite{LG}, there exists a constant $C_{3,2}=C_{3,2}(|\lambda_1|,|\lambda_2|)\in(0,\infty)$ such that
\begin{eqnarray*}
&&\|u^\varphi_n(t)\|^2+\frac{3}{2}\,\int_0^t\|\nabla u^\varphi_n(s)\|^2ds+\frac{3}{2}\,\int_0^t\|u^\varphi_n(s)\|_{2\sigma+2}^{2\sigma+2}ds\\
&\leq&\|u^\varphi_n(0)\|^2+\left(2\,\gamma+C_{3,2}(|\lambda_1|,|\lambda_2|)\right)\int_0^t\|u^\varphi_n(s)\|^2ds\\
&&+2\int_0^t\|u^\varphi_n(s)\|^2\int_\mathcal{Z}|g(z)||\varphi(s,z)-1|\nu(dz)ds.
\end{eqnarray*}
Applying Gronwall's lemma,   we have
\begin{eqnarray}
\nonumber
&&\sup\limits_{0\leq s\leq T}\|u^\varphi_n(s)\|^2+\int_0^T\|\nabla u^\varphi_n(s)\|^2ds+\int_0^T\|u^\varphi_n(s)\|_{2\sigma+2}^{2\sigma+2}ds\\
\nonumber
&\leq&\|u^\varphi_n(0)\|^2\cdot\exp\left[\int_0^T\left(\left(2\,\gamma+C_{3,2}(|\lambda_1|,|\lambda_2|)\right) +2\int_\mathcal{Z}|g(z)||\varphi(s,z)-1|\nu(dz)\right)ds\right]\\
&\leq&C_{3,1}(\|u(0)\|^2+1),\nonumber
\end{eqnarray}
for some constant $C_{3,1}\in(0,\infty)$.
Here we have used the fact of $\int_0^T\int_\mathcal{Z}|g(z)||\varphi(s,z)-1|\nu(dz)ds$ is finite by (\ref{g_H_2}). The proof is complete.
\end{proof}

\begin{lemma}\label{Estimate-lemm-03}
 Suppose Condition \ref{Condition Exponential Integrability}
  holds, $2\leq p<2\sigma$, $\sigma>2$ and $0<|\beta|<\frac{\sqrt{2\sigma+1}}{\sigma}$. Then for any $u(0)\in V$ and for any $\varphi\in \mathbb{S}$, there exists a constant $C_{3,3}\in (0,\infty)$ such that
\begin{eqnarray}
&&\sup\limits_{n\geq1}\bigg(\sup\limits_{0\leq s\leq t}\|\nabla u^\varphi_n(s)\|^p+\int_0^t\|\nabla u^\varphi_n(s)\|^{p-2}\|\triangle u^\varphi_n(s)\|^2ds\nonumber\\
&&\ \ \ \ + \int_0^t\int_D\|\nabla u^\varphi_n(s)\|^{p-2}|u^\varphi_n(s,x)|^{2\sigma}|\nabla u^\varphi_n(s,x)|^2dxds\bigg)\nonumber\\&\leq &C_{3,3}\,(\|\nabla u(0)\|^p+1).\nonumber
\end{eqnarray}
\end{lemma}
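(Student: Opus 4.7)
The plan is to apply the chain rule to $\|\nabla u^\varphi_n(t)\|^p=(\|\nabla u^\varphi_n(t)\|^2)^{p/2}$, derive a differential inequality in which the three quantities appearing on the left-hand side of the claim are isolated, and then close the estimate by Gronwall. Since Eq.\,\eqref{4} is deterministic, no Itô corrections appear, and the Dirichlet boundary condition allows free use of integration by parts.

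First I would compute $\tfrac12\tfrac{d}{dt}\|\nabla u^\varphi_n\|^2=-\operatorname{Re}\langle \Delta u^\varphi_n,\partial_t u^\varphi_n\rangle$ and substitute the drift from \eqref{4}. Using $\operatorname{Re}\langle\Delta u_n,(1+i\alpha)\Delta u_n\rangle=\|\Delta u_n\|^2$ and integration by parts, the linear part produces $-\|\Delta u^\varphi_n\|^2+\gamma\|\nabla u^\varphi_n\|^2$; the polynomial dissipation $-(1-i\beta)|u_n|^{2\sigma}u_n$ produces, after expanding $\nabla(|u|^{2\sigma}u)=|u|^{2\sigma}\nabla u+\sigma|u|^{2\sigma-2}u\,\nabla(|u|^2)$ and separating real/imaginary parts as in \cite{LG}, a strictly negative multiple of $\int_D|u_n|^{2\sigma}|\nabla u_n|^2\,dx$ thanks to the assumption $0<|\beta|<\sqrt{2\sigma+1}/\sigma$; the derivative nonlinearity $F(u_n)$ yields mixed quantities such as $\int_D|u_n|^2|\nabla u_n|^2\,dx$ and $\int_D|u_n|^3|\Delta u_n|\,dx$; finally, the control term contributes $\|\nabla u_n\|^2\int_{\ZZ}|g(z)||\varphi(s,z)-1|\nu(dz)$ after integrating by parts in the inner product $\langle \Delta u_n,u_n\rangle=-\|\nabla u_n\|^2$.

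Multiplying through by $\tfrac{p}{2}\|\nabla u_n\|^{p-2}$ gives an evolution for $\|\nabla u_n\|^p$ in which the good terms $\|\nabla u_n\|^{p-2}\|\Delta u_n\|^2$ and $\|\nabla u_n\|^{p-2}\int_D|u_n|^{2\sigma}|\nabla u_n|^2\,dx$ both appear on the favourable side. The mixed quantities produced by $F(u_n)$ are then split by Young's inequality with carefully chosen exponents: the restriction $2\le p<2\sigma$ is precisely what permits interpolation between $|u_n|^{2\sigma-2}|\nabla u_n|^2$ and lower-order factors so that a small fraction of each good term absorbs the mixed ones, leaving a remainder of the form $h(s)\bigl(1+\|\nabla u_n\|^p\bigr)$, where
\begin{align*}
h(s)=C\bigl(1+\|u_n(s)\|_{2\sigma+2}^{2\sigma+2}+\|u_n(s)\|^{2}\bigr)+2p\int_{\ZZ}|g(z)||\varphi(s,z)-1|\nu(dz).
\end{align*}
By Lemma \ref{Estimate-lemm-01}, the first part of $h$ is in $L^1([0,T])$ uniformly in $n$; by \eqref{g_H_2} of Lemma \ref{3.1}, the second part is in $L^1([0,T])$ uniformly in $\varphi\in S^N$. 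Integrating in $t$ and applying Gronwall's lemma delivers the supremum bound for $\|\nabla u_n\|^p$, whereupon a second integration yields the two remaining bounds.

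\textbf{Main obstacle.} The genuinely delicate step is the first one: $\langle\Delta u_n,-(1-i\beta)|u_n|^{2\sigma}u_n+F(u_n)\rangle$ is a sum of complex-valued integrals whose real part must be extracted so that the combined coefficient of $\int_D|u_n|^{2\sigma}|\nabla u_n|^2\,dx$ stays negative. This is where the sharp constraint $|\beta|<\sqrt{2\sigma+1}/\sigma$ enters, via a quadratic-form argument identical in spirit to the one used for inequality (3.6) in \cite{LG}. Once that sign is secured, the Young--Gronwall machinery of the remaining steps is routine, and the uniformity in $n$ and in $\varphi\in S^N$ follows automatically since every constant produced along the way depends only on $N$, $T$, the initial data, and the coefficients of the equation.
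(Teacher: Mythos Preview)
Your proposal is correct and follows the same skeleton as the paper: apply the chain rule to $\|\nabla u^\varphi_n\|^p$, integrate by parts so that the drift pairing becomes $-p\,\textmd{Re}\,\|\nabla u^\varphi_n\|^{p-2}\langle\Delta u^\varphi_n, P_nG(u^\varphi_n)\rangle$, exploit the condition $|\beta|<\sqrt{2\sigma+1}/\sigma$ to make the coefficient of $\int_D|u^\varphi_n|^{2\sigma}|\nabla u^\varphi_n|^2\,dx$ negative, absorb the $F$-terms by Young's inequality, and close with Gronwall using \eqref{g_H_2}.

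The one noteworthy deviation is in the remainder you propose after absorbing the $F$-terms. You allow $h(s)$ to contain $\|u^\varphi_n(s)\|_{2\sigma+2}^{2\sigma+2}+\|u^\varphi_n(s)\|^{2}$ and then invoke Lemma~\ref{Estimate-lemm-01} to put $h$ in $L^1([0,T])$. The paper instead quotes the sharper estimate (3.17) of \cite{LG}, which reduces the mixed terms coming from $F$ to $\epsilon_1\!\int_D|u^\varphi_n|^{2\sigma}|\nabla u^\varphi_n|^2\,dx+C(\epsilon_1,|\lambda_1|,|\lambda_2|)\|\nabla u^\varphi_n\|^p$ via the H\"older/Young split $\int|u|^4|\nabla u|^2\le\big(\int|u|^{2\sigma}|\nabla u|^2\big)^{2/\sigma}\|\nabla u\|^{2(\sigma-2)/\sigma}$. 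Consequently the paper's Gronwall coefficient is the constant $p\gamma+C_{3,4}$ plus the $\varphi$-term, and the bound is obtained \emph{without} appealing to Lemma~\ref{Estimate-lemm-01}; the final constant depends on $\|\nabla u(0)\|^p$ alone rather than on $\|u(0)\|$ through an exponential. Your route still closes, but the paper's is cleaner and decouples the $V$-level estimate from the $H$-level one.
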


\begin{proof}
Applying the chain rule to $\|\nabla u^\varphi_n(t)\|^p$ and taking the real part, we obtain

\begin{eqnarray*}
\setlength{\abovedisplayskip}{3pt}
\setlength{\belowdisplayskip}{3pt}
\|\nabla u^\varphi_n(t)\|^p
&=&\|\nabla u^\varphi_n(0)\|^p+p\,\textmd{Re}\int_0^t\|\nabla u^\varphi_n(s)\|^{p-2}\langle\nabla u^\varphi_n(s),\nabla P_nG(u^\varphi_n(s))\rangle ds\\&&+p\,\textmd{Re}\int_0^t\int_\mathcal{Z}\|\nabla u^\varphi_n(s)\|^{p-2}\langle\nabla u^\varphi_n(s),\nabla u^\varphi_n(s)g(z)(\varphi(s,z)-1)\rangle\nu(dz)ds\nonumber\\
&\leq&\|\nabla u^\varphi_n(0)\|^p-p\,\textmd{Re}\int_0^t\|\nabla u^\varphi_n(s)\|^{p-2}\langle\Delta u^\varphi_n(s), P_nG(u^\varphi_n(s))\rangle ds\\&&+p\int_0^t\|\nabla u^\varphi_n(s)\|^{p}\int_\mathcal{Z}|g(z)||\varphi(s,z)-1|\nu(dz)ds.
\end{eqnarray*}

 According to (3.17) in \cite{LG}, there exists $\lambda_\beta>0$ such that
\begin{equation}\label{I_6}
\begin{split}
&-p\,\textmd{Re}\int_0^t|\nabla u^\varphi_n(s)\|^{p-2}\langle\Delta u^\varphi_n(s), P_nG(u^\varphi_n(s))\rangle ds\\
\leq&\,-\frac{p}{2}\int_0^t\|\nabla u^\varphi_n(s)\|^{p-2}\|\Delta u^\varphi_n(s)\|^2ds+
\left(p\gamma+C_{3,4}(\e_1, |\lambda_1|,|\lambda_2|)\right)\int_0^t\|\nabla u^\varphi_n(s)\|^pds\\
&+\left(-2\lambda_\beta+\e_1\right)\int_0^t\int_D\|\nabla u^\varphi_n(s)\|^{p-2}|u^\varphi_n(s,x)|^{2\sigma}|\nabla u^\varphi_n(s,x)|^2 dxds.
\end{split}
\end{equation}

Therefore, we have
\begin{eqnarray*}
&&\|\nabla u^\varphi_n(t)\|^p+\frac{p}{2}\int_0^t\|\nabla u^\varphi_n(s)\|^{p-2}\|\Delta u^\varphi_n(s)\|^2ds\\
&&+\left(2\lambda_\beta-\e_1\right)\int_0^t\int_D\|\nabla u^\varphi_n(s)\|^{p-2}|u^\varphi_n(s,x)|^{2\sigma}|\nabla u^\varphi_n(s,x)|^2 dxds\\
&\leq&\|\nabla u^\varphi_n(0)\|^p
+\left(p\gamma+C_{3,4}(\e_1, |\lambda_1|,|\lambda_2|)\right)\int_0^t\|\nabla u^\varphi_n(s)\|^pds\\
&&+p\int_0^t\|\nabla u^\varphi_n(t)\|^p\int_\mathcal{Z}|\varphi(s,z)-1||g(z)|\nu(dz)ds.
\end{eqnarray*}

Choosing sufficiently small $\e_1$ such that $2\lambda_\beta-\e_1>0$, by Gronwall's lemma and (\ref{g_H_2}) in Lemma \ref{3.1}, we get that there exists a constant $C_{3,3}\in(0,\infty)$, such that
\begin{eqnarray*}
\setlength{\abovedisplayskip}{3pt}
\setlength{\belowdisplayskip}{3pt}
&&\sup\limits_{0\leq s\leq t}\|\nabla u^\varphi_n(s)\|^p+\int_0^t\|\nabla u^\varphi_n(s)\|^{p-2}\|\triangle u^\varphi_n(s)\|^2ds\\
&&+ \int_0^t\int_D\|\nabla u^\varphi_n(s)\|^{p-2}|u^\varphi_n(s,x)|^{2\sigma}|\nabla u^\varphi_n(s,x)|^2dxds\\
&\leq&\|\nabla u^\varphi_n(0)\|^p\cdot\exp{\left[\int_0^T\left(\left(p\gamma+C_{3,4}(\e_1, |\lambda_1|,|\lambda_2|)\right)+p\int_\mathcal{Z}|g(z)||\varphi(s,z)-1|\nu(dz)\right)ds\right]}\\ &\leq&C_{3,3}(\|\nabla u(0)\|^p+1).
\end{eqnarray*}
The proof is complete.
\end{proof}

Next we study the convergence of approximating sequences.

\begin{lemma}\label{converge-lemm-01}
Suppose Condition \ref{Condition Exponential Integrability}
  holds, $\sigma>2$ and $0<|\beta|<\frac{\sqrt{2\sigma+1}}{\sigma}$. Let $T(u^\varphi_n)=-(1-i\beta)|u^\varphi_n|^{2\sigma}u^\varphi_n$, $q=\frac{2\sigma+2}{2\sigma+1}$ and $2\leq p<2\sigma$. Then for any $u(0)\in V$ and for any $\varphi\in \mathbb{S}$, there exist a subsequence of $\{u^\varphi_n\}$ (still denoted by $\{u^\varphi_n\}$),   $\tilde{u}^\varphi\in L^2((0,T];H^2)\cap L^\infty([0,T];V)\cap L^{2\sigma+2}([0,T];L^{2\sigma+2}(D))$, $\widetilde{T}\in L^q([0,T];L^q(D))$ and $\widetilde{F}\in L^2([0,T];H)$ such that
  \begin{itemize}
\item[(i)] $u^\varphi_n \rightarrow \tilde{u}^\varphi$\,\,weakly\,\,in\,\,$L^2((0,T];H^2)\cap L^{2\sigma+2}([0,T];L^{2\sigma+2}(D))\cap L^p([0,T];V)$;\\
\item[(ii)] $u^{\varphi_n}$ is weak-star converging to $\tilde{u}^\varphi$ in $L^\infty([0,T];V)$;\\
\item[(iii)] $P_n T(u^\varphi_n)\rightarrow \widetilde{T}$\,weakly\,\,in\,\,$L^q([0,T];L^q(D))$;\\
\item[(iv)]$P_n F(u^\varphi_n)\rightarrow \widetilde{F}$\,weakly\,\,in\,\,$L^2([0,T];H)$;\\
\item[(v)] $u^\varphi_n\rightarrow \tilde{u}^\varphi$ strongly\,\,in\,\,$L^2([0,T];V)$.\\
\end{itemize}
\end{lemma}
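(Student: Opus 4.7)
The plan is to extract subsequences by a diagonal application of Banach--Alaoglu from the uniform estimates of Lemmas \ref{Estimate-lemm-01} and \ref{Estimate-lemm-03}, and then to upgrade the weak convergence in (v) to strong convergence by a compactness-plus-interpolation argument based on Lemma \ref{compact}. For (i), the $L^p([0,T];V)$ bound is trivial from the pointwise-in-time estimate $\sup_t\|\nabla u^\varphi_n(t)\|^p\le C$ given by Lemma \ref{Estimate-lemm-03}; the $L^{2\sigma+2}([0,T];L^{2\sigma+2}(D))$ bound is supplied directly by Lemma \ref{Estimate-lemm-01}; and the $L^2((0,T];H^2)$ bound comes from Lemma \ref{Estimate-lemm-03} with $p=2$, using that $A$ is an isomorphism from $V\cap H^2$ onto $H$, so that $\|\cdot\|_{H^2}\sim\|\Delta\cdot\|$. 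Part (ii) follows from the same $L^\infty_t V$ bound together with sequential weak-$\ast$ compactness (the predual $L^1([0,T];V^*)$ is separable). Part (iii) uses the pointwise identity $\||u|^{2\sigma}u\|_{L^q}^q=\|u\|_{L^{2\sigma+2}}^{2\sigma+2}$ with $q=\frac{2\sigma+2}{2\sigma+1}$, which reduces a uniform $L^q_{t,x}$ bound on $T(u^\varphi_n)$ to Lemma \ref{Estimate-lemm-01}.

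For (iv), the pointwise inequality $|F(u)|\lesssim|u|^2|\nabla u|$ reduces matters to a uniform bound on $\int_0^T\!\!\int_D|u^\varphi_n|^4|\nabla u^\varphi_n|^2\,dx\,ds$. Since $\sigma>2$, H\"older's inequality with conjugate exponents $(\sigma/2,\sigma/(\sigma-2))$ in both space and time gives
\begin{equation*}
\int_0^T\!\!\!\int_D\!|u^\varphi_n|^4|\nabla u^\varphi_n|^2 dxds\le\Big(\!\int_0^T\!\!\!\int_D\!|u^\varphi_n|^{2\sigma}|\nabla u^\varphi_n|^2 dxds\Big)^{\!2/\sigma}\!\Big(\!\int_0^T\!\|\nabla u^\varphi_n\|^2 ds\Big)^{\!(\sigma-2)/\sigma},
\end{equation*}
and both factors are controlled by Lemmas \ref{Estimate-lemm-01}--\ref{Estimate-lemm-03} with $p=2$. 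A diagonal extraction produces a common subsequence realizing all of (i)--(iv) simultaneously, and $\widetilde T$, $\widetilde F$ are defined as the corresponding weak limits (identifying them with $T(\tilde u^\varphi)$ and $F(\tilde u^\varphi)$ is postponed to the existence step, where monotonicity/Minty-type arguments will be used).

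The main obstacle is (v). Using equation \eqref{4}, I would first show that $u^\varphi_n$ is uniformly bounded in $W^{\delta,r}([0,T];V^*)$ for suitable $\delta\in(0,1)$ and $r>1$: the time-increment $u^\varphi_n(t)-u^\varphi_n(s)$ splits into four contributions (Laplacian, $|u|^{2\sigma}u$, $F(u)$, and the jump integral), each estimated in $V^*$ via the uniform bounds already established, the Sobolev embedding $V\hookrightarrow L^{q'}$ valid on the 2D domain $D$ (which identifies $L^q\hookrightarrow V^*$), and the integrability of $s\mapsto\int_{\ZZ}|g(z)||\varphi(s,z)-1|\nu(dz)$ from \eqref{g_H_2}. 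Combined with the $L^2([0,T];V)$ bound, Lemma \ref{compact} then yields strong convergence (along a further subsequence) in $L^2([0,T];H)$. The final step uses the interpolation inequality $\|v\|_V\le C\|v\|_H^{1/2}\|v\|_{H^2}^{1/2}$ and Cauchy--Schwarz:
\begin{equation*}
\|u^\varphi_n-\tilde u^\varphi\|_{L^2([0,T];V)}^2\le C\,\|u^\varphi_n-\tilde u^\varphi\|_{L^2([0,T];H)}\Big(\sup_n\|u^\varphi_n\|_{L^2([0,T];H^2)}+\|\tilde u^\varphi\|_{L^2([0,T];H^2)}\Big),
\end{equation*}
and the right-hand side tends to zero. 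The delicate point is the $V^*$ estimate on the time-increment: the Galerkin projection $P_n$ is not a priori bounded in $L^q$ for $q\ne 2$, so one must work by duality, testing against $\phi\in V$ and exploiting that $P_n\phi\in V$ with $\|P_n\phi\|_V$ controlled, in order to transfer the nonlinear term $P_n(|u^\varphi_n|^{2\sigma}u^\varphi_n)$ into $V^*$.
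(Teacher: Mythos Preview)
Your treatment of (i)--(iv) matches the paper essentially verbatim, including the H\"older splitting with exponents $(\sigma/2,\sigma/(\sigma-2))$ for (iv). The route for (v), however, diverges. The paper estimates the time-increments of each piece $J_n^k$ of the Galerkin equation in the $H$-norm rather than in $V^*$: for the nonlinear term $P_n(|u_n^\varphi|^{2\sigma}u_n^\varphi)$ it invokes the Sobolev interpolation $\|u\|_{L^{4\sigma+2}}\le\|\nabla u\|^{\sigma/(2\sigma+1)}\|u\|_{L^{2\sigma+2}}^{(\sigma+1)/(2\sigma+1)}$ together with the $L^\infty_tV$ bound from Lemma~\ref{Estimate-lemm-03} to put $|u_n^\varphi|^{2\sigma}u_n^\varphi$ into $L^2$, so that the $H$-contractivity of $P_n$ suffices and no duality argument is needed. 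Having obtained $\sup_n\|u_n^\varphi\|_{W^{\delta,2}([0,T];H)}<\infty$, the paper passes to $\nabla u_n^\varphi$ (bounded in $W^{\delta,2}([0,T];V^*)$ and, via the $L^2_tH^2$ estimate, in $L^2([0,T];V)$) and applies Lemma~\ref{compact} once to reach $L^2_tV$ strong convergence directly, with no interpolation step. Your approach---estimates in $V^*$, compactness in $L^2_tH$, then interpolation up to $L^2_tV$---is equally valid but leans on $\|P_n\phi\|_V\le C\|\phi\|_V$ uniformly in $n$, which holds for the Dirichlet--Laplacian eigenbasis (implicit here since $e_i\in D(A)$) but is not automatic for a generic orthonormal basis of $H$; the paper's $H$-norm route sidesteps that basis dependence at the cost of a sharper Sobolev inequality, while your route is more modular and avoids the somewhat informal $\nabla$-shift the paper uses when invoking Lemma~\ref{compact}.
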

 \begin{proof}
 According to Lemma \ref{Estimate-lemm-01} and Lemma \ref{Estimate-lemm-03}, we obtain (i)-(iii).

 For (iv), since
\begin{align}\label{1-2-3-2-1}
\lambda_1\cdot \nabla (|u^\varphi_n|^2u^\varphi_n)+(\lambda_2\cdot \nabla u^\varphi_n)|u^\varphi_n|^2
=\left((2\lambda_1+\lambda_2)\cdot\nabla u^\varphi_n\right)|u^\varphi_n|^2+(\lambda_1\cdot\nabla u^\varphi_n)(u^\varphi_n)^2,
\end{align}
 by H\"older's inequality, Young's inequality and Lemma \ref{Estimate-lemm-03}, we have
 \begin{align*}
 \setlength{\abovedisplayskip}{3pt}
\setlength{\belowdisplayskip}{3pt}
 &\int^T_0\|P_n F(u^\varphi_n(s))\|^2ds\\
  \leq&\, 9(|\lambda_1|+|\lambda_2|)^2\int^T_0\int_D
|u^\varphi_n(s,x)|^4|\nabla u^\varphi_n(s,x)|^2dxds\\
  =&\,9(|\lambda_1|+|\lambda_2|)^2\int^T_0\int_D
|u^\varphi_n(s,x)|^4|\nabla u^\varphi_n(s,x)|^{\frac{4}{\sigma}}|\nabla u^\varphi_n(s,x)|^{2-\frac{4}{\sigma}}dxds\\
 \leq&\,9(|\lambda_1|+|\lambda_2|)^2\int^T_0\left(\int_D
|u^\varphi_n(s,x)|^{2\sigma}|\nabla u^\varphi_n(s,x)|^2dx\right)^{\frac{2}{\sigma}}\left(\int_D|\nabla u^\varphi_n(s,x)|^2dx\right)^{\frac{\sigma-2}{\sigma}}ds\\
 \leq&\,\e_2\int^T_0\int_D
|u^\varphi_n(s,x)|^{2\sigma}|\nabla u^\varphi_n(s,x)|^2dxds +C_{3,5}(\e_2, |\lambda_1|,|\lambda_2|)\int^T_0\|\nabla u^\varphi_n(s)\|^2ds\\
<&\,\infty.
 \end{align*}
This implies (iv).

For (v), we write $u^\varphi_n$ as
 \begin{equation}\label{Pn-1}
 \begin{split}
u^\varphi_n(t) =&\,u^\varphi_n(0)+ \int^t_0(1+i\alpha)\Delta u^\varphi_n(s)ds\\
 &+\int^t_0P_n\big(-(1-i\beta)|u^\varphi_n(s)|^{2\sigma}u^\varphi_n(s)+\gamma u^\varphi_n(s))\big)ds\\
 &+\int^t_0P_n\big(\lambda_1\cdot \nabla (|u^\varphi_n(s)|^2u^\varphi_n(s))+(\lambda_2\cdot \nabla u^\varphi_n(s))|u^\varphi_n(s)|^2\big)ds\\
 &+\int^t_0\int_\mathcal{Z}g(z)(\varphi(s,z)-1)u^\varphi_n(s)\nu(dz)ds\\
 =:&J_n^1+J_n^2+J_n^3+J_n^4+J_n^5,
\end{split}
\end{equation}
where $u^\varphi_n(0)=u(0)$.

Inspiring by the proof of Theorem 3.1 in \cite{FG1995} and Lemma 4.2 in \cite{Zhai-Zhang}, we show some estimates for $J_n^1$, $J_n^3$, $J_n^4$, $J_n^5$ and $J_n^2$ in order. By $u^\varphi_n(0)=u(0)$, we have  
\begin{eqnarray}\label{inequation-J_n}
\|J_n^1\|_V^2&\leq& L_1,
\end{eqnarray}
for the constant $L_1\in (0,\infty)$.

For $t>s$, by using H\"older's inequality, we obtain that there exists $C_{3,6}(\beta)\in(0,\infty)$ such that
\begin{eqnarray*}
&&\|J_n^3(t)-J_n^3(s)\|^2\\
&=&\big\|\int^t_sP_n\big(-(1-i\beta)|u^\varphi_n(l)|^{2\sigma}u^\varphi_n(l)+\gamma u^\varphi_n(l)  \big)dl\big\|^2\\
&\leq&\left(\int^t_s\|P_n\big(-(1-i\beta)|u^\varphi_n(l)|^{2\sigma}u^\varphi_n(l)\|+\gamma \| u^\varphi_n(l)\|dl\right)^2\\
&\leq&C_{3,6}(\beta) T\int^t_s\|u^\varphi_n(l)\|^{4\sigma+2}_{4\sigma+2}dl+\gamma^2 T\int^t_s\| u^\varphi_n(l)\|^2dl.
\end{eqnarray*}
Using the following Sobolev inequality in \cite[Theorem 12.83]{Leoni},
\begin{eqnarray}\label{Sobolev-inequality}
\|u^\varphi_n(t)\|_{4\sigma+2}\leq\|\nabla u^\varphi_n(t)\|^{\frac{\sigma}{2\sigma+1}}
\big\|u^\varphi_n(t)\big\|^{\frac{\sigma+1}{2\sigma+1}}_{2\sigma+2},
\end{eqnarray}
 we obtain by (\ref{Sobolev-inequality}) and Lemma \ref{Estimate-lemm-03},
\begin{eqnarray}\label{norm_est}
\int^t_s\|u^\varphi_n(l)\|^{4\sigma+2}_{4\sigma+2}dl
&\leq&\sup_{l\in[s,t]}\|\nabla u^\varphi_n(l)\|^{2\sigma}\int^t_s\big\|u^\varphi_n(l)\big\|^{2\sigma+2}_{2\sigma+2}dl\\
\nonumber
&\leq&C_{3,3}^{\frac{2\sigma}{p}}\left(\|\nabla u(0)\|^p+1\right)^{\frac{2\sigma}{p}}
\int^t_s\big\|u^\varphi_n(l)\big\|^{2\sigma+2}_{2\sigma+2}dl.
\end{eqnarray}
Therefore, by (\ref{norm_est}), we have
\begin{equation}\label{26}
 \|J_n^3(t)-J_n^3(s)\|^2
 \leq \, C_{3,7}\int^t_s\|u^\varphi_n(l)\|^{2\sigma+2}_{2\sigma+2}dl+\gamma^2 T\int^t_s\| u^\varphi_n(l)\|^2dl,
\end{equation}
where $C_{3,7}=C_{3,6}(\beta)TC_{3,3}^{\frac{2\sigma}{p}}\left(\|\nabla u(0)\|^p+1\right)^{\frac{2\sigma}{p}}$.
Applying the Fubini theorem, for any $\delta\in(0,\frac{1}{2})$, there exists a constant $L^1_3>0$ such that
 \begin{equation}\label{27}
 \begin{split}
 &\int^T_0\int^T_0\frac{\|J_n^3(t)-J_n^3(s)\|^2}{|t-s|^{1+2\delta}}dtds\\
 \leq&\, \int^T_0\int^T_0\left(C_{3,7}\int^t_s\|u^\varphi_n(t)\|^{2\sigma+2}_{2\sigma+2}dl+\gamma^2 T\int^t_s\| u^\varphi_n(l)\|^2dl\right)\cdot|t-s|^{-1-2\delta}dtds\\
   \leq&\, L^1_3\left(C_{3,7}\int^T_0\|u^\varphi_n(t)\|^{2\sigma+2}_{2\sigma+2}dl+\gamma^2 T\int^T_0\| u^\varphi_n(l)\|^2dl\right)
 \end{split}
\end{equation}
and
\begin{equation}\label{H-norm-J_3}
\begin{split}
\int^T_0\|J_n^3(t)\|^2dt
 \leq &\, C_{3,6}(\beta)\int^T_0\int^t_0\|u^\varphi_n(s)\|_{4\sigma+2}^{4\sigma+2}ds\cdot Tdt+\int^T_0\gamma^2\int^t_0\| u^\varphi_n(s)\|^2ds\cdot Tdt\\
 \leq &\, C_{3,7}T\int^t_0\|u^\varphi_n(s)\|^{2\sigma+2}_{2\sigma+2}ds+\gamma^2 T\int^t_0\| u^\varphi_n(s)\|^2ds.
\end{split}
\end{equation}
By (\ref{27}), (\ref{H-norm-J_3}) and Lemma \ref{Estimate-lemm-01}, for $L_3>0$, we have
\begin{eqnarray}\label{J_n^3}
\|J_n^3\|^2_{W^{\delta,2}([0,T];H)}&\leq& L_3.
\end{eqnarray}

Then we give some estimates of $J_n^4$.
By using (\ref{1-2-3-2-1}), the Cauchy-Schwarz inequality and the Young  inequality, we obtain
\begin{eqnarray*}
&&\|J_n^4(t)-J_n^4(s)\|^2\\
&=&\big\|\int^t_sP_n\big(\lambda_1\cdot \nabla (|u^\varphi_n(l)|^2u^\varphi_n(l))+(\lambda_2\cdot \nabla u^\varphi_n(l))|u^\varphi_n(l)|^2  \big)dl\big\|^2\\
&\leq&\left(\int^t_s3(|\lambda_1|+|\lambda_2|)\||u^\varphi_n(l)|^2|\nabla u^\varphi_n(l)|\|dl\right)^2\\
&\leq& T 9(|\lambda_1|+|\lambda_2|)^2\int^t_s\int_D|u^\varphi_n(l,x)|^4|\nabla u^\varphi_n(l,x)|^{\frac{4}{\sigma}}|\nabla u^\varphi_n(l,x)|^{2-\frac{4}{\sigma}}dxdl\\
&\leq& T 9(|\lambda_1|+|\lambda_2|)^2\int^t_s\left(\int_D|u^\varphi_n(l,x)|^{2\sigma}|\nabla u^\varphi_n(l,x)|^2dx\right)^{\frac{2}{\sigma}}\left(\int_D(|\nabla u^\varphi_n(l,x)|^2dx\right)^{\frac{\sigma-2}{\sigma}}dl\\
&\leq& \e_3\int^t_s\int_D|u^\varphi_n(l,x)|^{2\sigma}|\nabla u^\varphi_n(l,x)|^2dx+C_{3,8}(T,\e_3,|\lambda_1|,|\lambda_2|)\int^t_s\|\nabla u^\varphi_n(l,x)\|^2dl.
\end{eqnarray*}
For $J_n^4$, we give the similar estimate as $J_n^3$ in (\ref{27}). By using the Fubini theorem and Lemma \ref{Estimate-lemm-03}, we have that for any $\delta\in(0,\frac{1}{2})$, there exists a constant $L_4>0$ such that
\begin{eqnarray}\label{J_n^4}
\|J_n^4\|^2_{W^{\delta,2}([0,T];H)}&\leq& L_4.
\end{eqnarray}

For $J_n^5$, by (\ref{g_H_2}) in Lemma \ref{3.1}, we have
\begin{eqnarray*}
&&\|J_n^5(t)-J_n^5(s)\|^2\\
&=&\big\|\int^t_s\int_\mathcal{Z}P_n\left(u^\varphi_n(l)g(z)(\varphi(l,z)-1)\right)\nu(dz)dl\big\|^2\\
&\leq&\left(\int^t_s\int_\mathcal{Z}\|u^\varphi_n(l)\||g(z)||\varphi(l,z)-1|\nu(dz)dl\right)^2\\
&\leq&\sup_{l\in[0,T]}\|u^\varphi_n(l)\|^2\left(\int^t_s\int_\mathcal{Z}|g(z)||\varphi(l,z)-1|\nu(dz)dl\right)^2\\
&\leq&\sup_{l\in[0,T]}\|u^\varphi_n(l)\|^2\left(\int^T_0\int_\mathcal{Z}|g(z)||\varphi(l,z)-1|\nu(dz)dl\right)
\left(\int^t_s\int_\mathcal{Z}|g(z)||\varphi(l,z)-1|\nu(dz)dl\right)\\
&\leq&C^0_2\sup_{l\in[0,T]}\|u^\varphi_n(l)\|^2\left(\int^t_s\int_\mathcal{Z}|g(z)||\varphi(l,z)-1|\nu(dz)dl\right).
\end{eqnarray*}
For $J_n^5$, we give the similar estimate as $J_n^3$ in (\ref{27}). Applying the Fubini theorem, we have that for any $\delta\in(0,\frac{1}{2})$, there exists a constant $L_5>0$ such that
\begin{eqnarray}\label{J_n^5}
\|J_n^5\|^2_{W^{\delta,2}([0,T];H)}&\leq& L_5.
\end{eqnarray}
Since there exists a constant   $C_{3,9}(\alpha)\in(0,\infty)$ such that
\begin{eqnarray*}
\|J_n^2(t)-J_n^2(s)\|^2\leq C_{3,9}(\alpha) T\int^t_s\|\Delta u^\varphi_n(l)\|^2dl,
\end{eqnarray*}
which is finite by  using   Lemma \ref{converge-lemm-01}(1), by the same technique as $J_n^3$, we obtain that there exists $L_2>0$ such that
\begin{eqnarray}\label{inequation-J_n^2}
\|J_n^2\|^2_{W^{\delta,2}([0,T];H)}&\leq& L_2.
\end{eqnarray}

Combining (\ref{inequation-J_n}), (\ref{J_n^3}), (\ref{J_n^4}) and (\ref{inequation-J_n^2}), we have for $\delta\in(0,\frac{1}{2})$, there exists a constant $C_{3,10}(\delta)>0$ such that

\begin{eqnarray}\label{U_n}
\sup_{n\geq1}\left(\sup_{t\in[0,T]}\|u^\varphi_n\|^2_{W^{\delta,2}([0,T];H)}\right)\leq C_{3,10}(\delta),
\end{eqnarray}
 which is equivalent to
\begin{eqnarray}\label{U_n-new}
\sup_{n\geq1}\left(\sup_{t\in[0,T]}\|\nabla u^\varphi_n\|^2_{W^{\delta,2}([0,T];V^*)}\right)\leq C_{3,10}(\delta).
\end{eqnarray}
By (\ref{U_n-new}), Lemma \ref{compact} and Lemma \ref{Estimate-lemm-01}, we have that $\{\nabla u^\varphi_n\}_{n\ge1}$ is compact in $L^2([0,T];H)$. This implies that $\{u^\varphi_n\}_{n\ge1}$ is compact in $L^2([0,T];V)$. Therefore, we have $u^\varphi_n\rightarrow \tilde{u}^\varphi$ strongly in $L^2([0,T];V)$. The proof is complete.
\end{proof}

\subsection{Proof of Proposition \ref{Main-thm-01}}
In this part, we shall give the proof of Proposition \ref{Main-thm-01}, which is inspired by   Section 3 in \cite{LG}.
\begin{proof}[Proof of Proposition \ref{Main-thm-01}]
({\bf Existence}). Recall $\tilde{u}^\varphi$ and $\widetilde{T}$ are given in Lemma \ref{converge-lemm-01}.
Define
\begin{eqnarray}\label{28-X}
X^\varphi:=(1+i\alpha)\Delta\tilde{u}^\varphi+\widetilde{T}+\gamma \tilde{u}^\varphi+\widetilde{F}
\end{eqnarray}
and
\begin{eqnarray}\label{28-u}
u(t):=u(0)+\int_0^t X^\varphi(s)ds+\lim_{n\rightarrow\infty}\int_0^t\int_\mathcal{Z}u^\varphi_n(s)g(z)\left(\varphi(s,z)-1\right)\nu(dz)ds,
\end{eqnarray}
such that $u$ is a $V^{*}$-valued  modification of the $V$-valued process $\tilde{u}^\varphi$ in Lemma \ref{converge-lemm-01}, i.e., $u=\tilde{u}^\varphi$ in $V$. The aim of this part is to verify the following results:\\
\begin{equation}\label{X-Y-2}
\begin{split}
&\lim_{n\rightarrow\infty}\big\|\int_0^t\int_\mathcal{Z}u^\varphi_n(s)g(z)\left(\varphi(s,z)-1\right)\nu(dz)ds\\
&\ \ \ \ \ \ \ -\int_0^t\int_\mathcal{Z} \tilde{u}^\varphi(s)g(z)(\varphi(s,z)-1)\nu(dz)ds\big\|_V=0
\end{split}
\end{equation}
 and
\begin{eqnarray}\label{X-Y-1}
G\left(\tilde{u}^\varphi(s)\right)=X^\varphi(s),
\end{eqnarray}
where $G(u)=A(u)+B(u)$ is defined in (\ref{1-3}).

Notice that
(\ref{X-Y-2}) and (\ref{X-Y-1}) imply that $\tilde{u}^\varphi$ satisfies Eq.\,(\ref{Eq  GGLE1}). Thus, we obtain the existence of the solution to the skeleton equation (\ref{Eq  GGLE1}).

Let's give the proof of (\ref{X-Y-2}) and \eqref{X-Y-1}.
By Lemma \ref{Estimate-lemm-03} and Lemma \ref{converge-lemm-01}, it holds that
\begin{align*}
M:=\sup_{n\geq1}\sup_{t\in[0,T]}\|u^\varphi_n(t)\|_V\vee\sup_{t\in[0,T]}\|\tilde{u}^\varphi(t)\|_V<\infty.
\end{align*}
For every $\e_4>0$, define $$A_{n,\e_4}:=\{t\in[0,T]:\|u^\varphi_n(t)-\tilde{u}^\varphi(t)\|_V>\e_4\}.$$
By Lemma \ref{converge-lemm-01}(v), we have
\begin{eqnarray}\label{Y-3}
\lim_{n\rightarrow\infty}\lambda_T(A_{n,\e_4})\leq\lim_{n\rightarrow\infty}\frac{\int^T_0\|u^\varphi_n(t)-\tilde{u}^\varphi(t)\|_V^2dt}{\e_4^2}=0.
\end{eqnarray}
Then we obtain
\begin{equation}\label{Lim-normal}
\begin{split}
&\int^T_0\int_{\mathcal{Z}}\|\left(u^\varphi_n(s)-\tilde{u}^\varphi(s)\right)g(z)\left(\varphi(s,z)-1\right)\|_V\nu(dz)ds\\
\leq&\,
\int^T_0\int_{\mathcal{Z}}\|u^\varphi_n(s)-\tilde{u}^\varphi(s)\|_V|g(z)||\varphi(s,z)-1|\nu(dz)ds\\
 =&\,  \int_{A_{n,\e_4}}\int_{\mathcal{Z}}\|u^\varphi_n(s)-\tilde{u}^\varphi(s)\|_V|g(z)||\varphi(s,z)-1|\nu(dz)ds\\
& +\int_{A^c_{n,\e_4}}\int_{\mathcal{Z}}\|u^\varphi_n(s)-\tilde{u}^\varphi(s)\|_V|g(z)||\varphi(s,z)-1|\nu(dz)ds\\
 \leq &\,  2M\int_{A_{n,\e_4}}\int_{\mathcal{Z}}|g(z)||\varphi(s,z)-1|\nu(dz)ds\\
&+\e_4\int_{A^c_{n,\e_4}}\int_{\mathcal{Z}}|g(z)||\varphi(s,z)-1|\nu(dz)ds.
\end{split}
\end{equation}
By (\ref{g_H_2}) and (\ref{g_H_3}) in Lemma \ref{3.1}, we know that (\ref{X-Y-2}) holds.

Next we consider the proof of (\ref{X-Y-1}).

For any $\xi\in L^2((0,T];H^2)\cap L^{\infty}([0,T];V)\cap L^{2\sigma+2}([0,T];L^{2\sigma+2}(D))$, similarly as $(3.28)$ in \cite{LG},  we set
\begin{align}\label{r-set-definition}
\nonumber
r(t)&=\int^t_02\big[C_{3,11}(\e_5,\gamma)+\e_6\|\xi(s)\|^2+C_{3,12}(\e_7)\|\nabla \xi(s)\|^{\frac{2\sigma}{\sigma-1}}\\
\,\,\,\,\,\,\,\,\,\,\,\,\,\,&+C_{3,13}(\e_8)\|\nabla \xi(s)\|^{\frac{7\sigma-2}{\sigma+1}}+C_{3,14}(\e_9)\|\nabla \xi(s)\|^{\frac{10\sigma+4}{\sigma+4}}\big]ds,
\end{align}
where $2<\frac{7\sigma-2}{\sigma+1}<2\sigma$, $2<\frac{10\sigma+4}{\sigma+4}<2\sigma$  red and $\e_i\in(0,\infty)$, $i=5, \cdots, 9$. 

Applying the chain rule to $e^{-r(t)}\|u^\varphi_n(t)\|^2$ and taking the real part, we obtain
\begin{align*}
\setlength{\abovedisplayskip}{3pt}
\setlength{\belowdisplayskip}{3pt}
&\,\,\,\,\,e^{-r(t)}\|u^\varphi_n(t)\|^2-\|u^\varphi_n(0)\|^2\\
&=-\int^t_0e^{-r(s)}r'(s)\|u^\varphi_n(s)\|^2ds+2\textmd{Re}\int^t_0e^{-s}\langle u^\varphi_n(s),P_nG\left(u^\varphi_n(s)\right)\rangle ds\\
&\,\,\,\,\,+2\textmd{Re}\int^t_0\int_{\mathcal{Z}}e^{-r(s)}\langle u^\varphi_n(s),u^\varphi_n(s)g(z)\left(\varphi(s,z)-1\right)\rangle\nu(dz)ds\\
&=-\textmd{Re}\int^t_0e^{-r(s)}r'(s)\left(2\langle u^\varphi_n(s),\xi(s)\rangle-\|\xi(s)\|^2\right)ds\\
&\,\,\,\,\,+2\textmd{Re}\int^t_0e^{-r(s)}\left(\langle P_nG\left(u^\varphi_n(s)\right)-P_nG\left(\xi(s)\right),\xi(s)\rangle
+\langle P_nG\left(\xi(s)\right),u^\varphi_n(s)\rangle\right)ds\\
&\,\,\,\,\,+2\textmd{Re}\int^t_0\int_{\mathcal{Z}}e^{-r(s)}\langle u^\varphi_n(s),u^\varphi_n(s)g(z)\left(\varphi(s,z)-1\right)\rangle\nu(dz)ds\\
&\,\,\,\,\,-\int^t_0e^{-r(s)}r'(s)\|u^\varphi_n(s)-\xi(s)\|^2ds\\
&\,\,\,\,\,+2\Re\int^t_0e^{-r(s)}\langle P_nG\left(u^\varphi_n(s)\right)-P_nG\left(\xi(s)\right),u^\varphi_n(s)-\xi(s)\rangle ds.
\end{align*}

 Define $\bar{\e}_1=\e_5+\e_7+\e_8+\e_9$ and $\bar{\e}_2=\e_9+\e_{11}$. Choosing $\bar{\e}_1$, $\bar{\e}_2$ small enough such that $\bar{\e}_1<1$ and $K=-(1-\frac{\sigma}{\sqrt{2\sigma+1}}|\beta|)2^{-2\sigma}+\bar{\e}_2<0$, by Lemma 3.5 and Lemma 3.6 in \cite{LG}, we have
\begin{equation}\label{r-set}
\begin{split}
&-r'(s)\|u^\varphi_n(s)-\xi(s)\|^2+2\textmd{Re}\big<P_nG(u^\varphi_n(s))-P_nG(\xi(s)),u^\varphi_n(s)-\xi(s)\big>\\
 \leq &\, (-2+2\bar{\e}_1)\|\nabla (u^\varphi_n(s)-\xi(s))\|^2+2K\|\nabla (u^\varphi_n(s)-\xi(s))\|^{2\sigma+2}_{2\sigma+2}\\
 \leq &\, 0.
\end{split}
\end{equation}
Therefore, we have
\vspace{-0.2em}
\begin{align*}
&\,\,\,\,\,e^{-r(t)}\|u^\varphi_n(t)\|^2-\|u^\varphi_n(0)\|^2\\
&\leq-\textmd{Re}\int^t_0e^{-r(s)}r'(s)\left(2\langle u^\varphi_n(s),\xi(s)\rangle-\|\xi(s)\|^2\right)ds\\
&\,\,\,\,\,+2\textmd{Re}\int^t_0e^{-r(s)}\left(\langle P_nG\left(u^\varphi_n(s)\right)-P_nG\left(\xi(s)\right),\xi(s)\rangle
+\langle P_nG\left(\xi(s)\right),u^\varphi_n(s)\rangle\right)ds\\
&\,\,\,\,\,+2\textmd{Re}\int^t_0\int_{\mathcal{Z}}e^{-r(s)}\langle u^\varphi_n(s),u^\varphi_n(s)g(z)\left(\varphi(s,z)-1\right)\rangle\nu(dz)ds.
\end{align*}
By the lower semi-continuity of the weak convergence, we have
\begin{align}\label{311}
\nonumber
&\,\,\,\,\,e^{-T}\|u(T)\|^2-\|u(0)\|^2\\
\nonumber
&\leq\liminf_{n\rightarrow\infty}\left[e^{-r(T)}\|u^\varphi_n(T)\|^2-\|u^\varphi_n(0)\|^2\right]\\
\nonumber
&\leq\liminf_{n\rightarrow\infty}\left[-\textmd{Re}\int^T_0e^{-r(s)}r'(s)\left(2\langle u^\varphi_n(s),\xi(s)\rangle-\|\xi(s)\|^2\right)ds\right]\\
\nonumber
&\,\,\,\,\,+2\liminf_{n\rightarrow\infty}\left[\textmd{Re}\int^T_0e^{-r(s)}\left(\langle P_nG\left(u^\varphi_n(s)\right)-P_nG\left(\xi(s)\right),\xi(s)\rangle
+\langle P_nG\left(\xi(s)\right),u^\varphi_n(s)\rangle\right)ds\right]\\
\nonumber
&\,\,\,\,\,+2\liminf_{n\rightarrow\infty}\left[\textmd{Re}\int^T_0\int_{\mathcal{Z}}e^{-r(s)}\langle u^\varphi_n(s),u^\varphi_n(s)g(z)\left(\varphi(s,z)-1\right)\rangle\nu(dz)ds\right]\\
&=\textmd{Re}\int^T_0e^{-r(s)}r'(s)\left(\|\xi(s)\|^2-2\langle\tilde{u}^\varphi(s),\xi(s)\rangle\right)ds\\
\nonumber
&\,\,\,\,\,+2\textmd{Re}\int^T_0e^{-r(s)}\left(\langle X^\varphi(s)-G\left(\xi(s)\right),\xi(s)\rangle
+\langle G\left(\xi(s)\right),\tilde{u}^\varphi(s)\rangle\right)ds\\
\nonumber
&\,\,\,\,\,+2\liminf_{n\rightarrow\infty}\left[\textmd{Re}\int^T_0\int_{\mathcal{Z}}e^{-r(s)}\langle u^\varphi_n(s),u^\varphi_n(s)g(z)\left(\varphi(s,z)-1\right)\rangle\nu(dz)ds\right].
\end{align}

 On the other hand, applying the chain rule to $e^{-r(T)}\|u(T)\|^2$, taking the real part and then by using (\ref{28-u}), we obtain
  \begin{align}\label{312}
  \nonumber
&\,\,\,\,\,e^{-r(T)}\|u(T)\|^2-\|u(0)\|^2\\
&=-\int^T_0e^{-r(s)}r'(s)\|u(s)\|^2ds+2\textmd{Re}\int^T_0e^{-r(s)}\langle X^\varphi(s),\tilde{u}^\varphi(s)\rangle ds\\
\nonumber
&\,\,\,\,\,+2\lim_{n\rightarrow\infty}\left[\textmd{Re}\int^T_0\int_{\mathcal{Z}}e^{-r(s)}\langle\tilde{u}^\varphi(s),u^\varphi_n(s)g(z)\left(\varphi(s,z)-1\right)\rangle\nu(dz)ds\right].
\end{align}

We claim that
\begin{equation}\label{3133}
 \begin{split}
&\,\,\,\,\,\,\,\,\liminf_{n\rightarrow\infty}\left[\Re\int^T_0\int_{\mathcal{Z}}e^{-r(s)}\langle u^\varphi_n(s),u^\varphi_n(s)g(z)\left(\varphi(s,z)-1\right)\rangle\nu(dz)ds\right]
\\
&=\lim_{n\rightarrow\infty}\left[\textmd{Re}\int^T_0\int_{\mathcal{Z}}e^{-r(s)}\langle\tilde{u}^\varphi(s),u^\varphi_n(s)g(z)\left(\varphi(s,z)-1\right)\rangle\nu(dz)ds\right].
\end{split}
\end{equation}
The proof of (\ref{3133})  is postponed below. We first use  (\ref{3133}) to prove the existence of solution to Eq.\,\eqref{Eq  GGLE1}.
 Using    (\ref{311}),  (\ref{312}) and (\ref{3133}), we obtain
 \begin{eqnarray*}
-\int^T_0e^{-r(s)}r'(s)\|\tilde{u}^\varphi(s)-\xi(s)\|^2ds+2\textmd{Re}\int^T_0e^{-r(s)}\langle X^\varphi(s)-G\left(\xi(s)\right),\tilde{u}^\varphi(s)-\xi(s)\rangle ds\leq0.
\end{eqnarray*}
Taking $\xi=\tilde{u}^\varphi-\e_{10} v$, $v\in L^\infty([0,T],V),\,\,\e_{10}>0$,
we arrive at
\begin{eqnarray*}
-\int^T_0e^{-r(s)}r'(s)\e_{10}^2\|v(s)\|^2ds+2\e_{10}\textmd{Re}\int^T_0e^{-r(s)}\langle X^\varphi(s)-G\left(\tilde{u}^\varphi(s)-\e_{10} v(s)\right),v(s)\rangle ds\leq0.
\end{eqnarray*}
Hence
\begin{eqnarray*}
-\int^T_0e^{-r(s)}r'(s)\e_{10}\|v(s)\|^2ds+2\textmd{Re}\int^T_0e^{-r(s)}\langle X^\varphi(s)-G\left(\tilde{u}^\varphi(s)-\e_{10}v(s)\right),v(s)\rangle ds\leq0.
\end{eqnarray*}
Since $v$ is arbitrary, letting $\e_{10}\rightarrow0$, we get $X^\varphi(s)=G\left(\tilde{u}^\varphi(s)\right)$.

We now prove (\ref{3133}). A straightforward calculation gives
\begin{equation}\label{normal_0}
\begin{split}
&\,\,\,\,\,\,\,\,\textmd{Re}\int^T_0\int_{\mathcal{Z}}e^{-r(s)}\langle \left(u^\varphi_n(s)-\tilde{u}^\varphi(s)\right),u^\varphi_n(s)g(z)\left(\varphi(s,z)-1\right)\rangle\nu(dz)ds\\
 \leq&\,
\int^T_0\int_{\mathcal{Z}}|e^{-r(s)}|\|u^\varphi_n(s)-\tilde{u}^\varphi(s)\||g(z)||\varphi(s,z)-1|\|u^\varphi_n(s)\|\nu(dz)ds\\
 \leq &\,  M\int^T_0\int_{\mathcal{Z}}\|u^\varphi_n(s)-\tilde{u}^\varphi(s)\||g(z)||\varphi(s,z)-1|\nu(dz)ds.
\end{split}
\end{equation}
This, together with   (\ref{Lim-normal}),  implies  (\ref{3133}). Namely, we finish the proof of (\ref{X-Y-1}).

({\bf Uniqueness}). Suppose  that $u^\varphi$ and $v^\varphi$ are two solutions of Eq.\,\eqref{Eq  GGLE1} with the same initial condition $u(0)$. Applying the chain rule to $e^{-r(t)}\|u^\varphi(t)-v^\varphi(t)\|^2$ and taking the real part, we have
\begin{align*}
&\,\,\,\,\,e^{-r(t)}\|u^\varphi(t)-v^\varphi(t)\|^2\\
&=-\int^t_0e^{-r(s)}r'(s)\|u^\varphi(s)-v^\varphi(s)\|^2ds\\
&\,\,\,\,\,+2\textmd{Re}\int^t_0e^{-r(s)}\langle u^\varphi(s)-v^\varphi(s), G(u^\varphi(s))-G(v^\varphi(s))\rangle ds\\
&\,\,\,\,\,+2\textmd{Re}\int^t_0\int_{\mathcal{Z}}e^{-r(s)}\langle u^\varphi(s)-v^\varphi(s),\left(u^\varphi(s)-v^\varphi(s)\right)g(z)\left(\varphi(s,z)-1\right)\rangle\nu(dz)ds.
\end{align*}
It follows from (\ref{r-set}) that
\begin{align*}
e^{-r(t)}\|u^\varphi(t)-v^\varphi(t)\|^2\leq2\textmd{Re}\int^t_0\int_{\mathcal{Z}}e^{-r(s)}\|u^\varphi(s)-v^\varphi(s)\|^2|g(z)||\varphi(s,z)-1|\nu(dz)ds.
\end{align*}
By Gronwall's inequality, we obtain the uniqueness of the solution and complete the proof of Proposition \ref{Main-thm-01}.
\end{proof}

\section{Verification of Condition \ref{LDP} (a) }
 We now verify  Condition \ref{LDP} (a). Recall that $\mathcal{G}^0(\nu_T^{\varphi})=u^\varphi$ for $\varphi\in \mathbb{S}$, where $u^\varphi$ satisfies Eq.\,\eqref{Eq  GGLE1}  and $\mathbb{S}$ is defined in (\ref{S-definition}).

\begin{proposition}\label{Prop-1}
 For any  $N\in\mathbb{N}$,   let $\varphi_n,\, \varphi\in S^N$ be such that $\varphi_n\rightarrow \varphi$ as $n\rightarrow\infty$. Then
     \begin{eqnarray*}
       \mathcal{G}^0(\nu_T^{\varphi_n})\rightarrow\mathcal{G}^0(\nu_T^{\varphi})\quad \text{in}\quad C([0,T],H)\cap L^2([0,T], V)\cap L^{2\sigma+2}([0,T], L^{2\sigma+2}(D)).
     \end{eqnarray*}
\end{proposition}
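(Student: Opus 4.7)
My plan is a compactness-plus-uniqueness argument paralleling the existence proof of Proposition \ref{Main-thm-01}. Set $u_n := u^{\varphi_n}$ and $u := u^\varphi$.

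Since $\varphi_n \in S^N$, the estimate (\ref{main-thm-01-ineq}) bounds $\{u_n\}$ uniformly in $L^\infty([0,T];H) \cap L^2([0,T];V) \cap L^{2\sigma+2}([0,T];L^{2\sigma+2}(D))$. Repeating the chain-rule computation of Lemma \ref{Estimate-lemm-03} directly on the skeleton equation (the projection $P_n$ plays no essential role in that argument) yields in addition a uniform bound in $L^\infty([0,T];V) \cap L^2([0,T];H^2)$, and the time-regularity analysis of $J_n^1,\ldots,J_n^5$ from the proof of Lemma \ref{converge-lemm-01} gives a uniform bound in $W^{\delta,2}([0,T];V^*)$ for some $\delta \in (0,1/2)$. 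By Lemma \ref{compact}, a subsequence (not relabelled) converges strongly in $L^2([0,T];V)$, and by Banach-Alaoglu weakly/weakly-$\star$ in the other spaces, to a limit $\bar u$. I would identify $\bar u = u$ by passing to the limit in the skeleton equation: the drift is handled exactly by the weighted-chain-rule/monotone-test argument used in the existence part of Proposition \ref{Main-thm-01} (chain rule on $e^{-r(t)}\|u_n - \xi\|^2$, invoke (\ref{r-set}), take $\xi = \bar u - \varepsilon v$ and let $\varepsilon \downarrow 0$); the Poisson control term splits as
$$\int_0^t\!\!\int_{\mathcal Z} u_n g(\varphi_n{-}1)\nu(dz)ds = \int_0^t\!\!\int_{\mathcal Z} (u_n{-}\bar u) g(\varphi_n{-}1)\nu(dz)ds + \int_0^t\!\!\int_{\mathcal Z} \bar u\, g(\varphi_n{-}1)\nu(dz)ds,$$
in which the first summand vanishes in $H$ by the $A_{n,\varepsilon}$-split used in (\ref{Lim-normal}), combining strong $L^2([0,T];H)$ convergence with (\ref{g_H_2})--(\ref{g_H_3}), and the second converges weakly in $H$ to $\int_0^t\int_{\mathcal Z}\bar u\, g(\varphi{-}1)\nu(dz)ds$ (justification below). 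Uniqueness then gives $\bar u = u$ and the whole sequence converges.

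To upgrade to $C([0,T];H)$ I would apply the weighted chain rule to $e^{-r(t)}\|u_n(t) - u(t)\|^2$ as in the uniqueness part of Proposition \ref{Main-thm-01}: (\ref{r-set}) absorbs the drift difference and produces the coercive terms $\int_0^t\|\nabla(u_n - u)\|^2\,ds$ and $\int_0^t\|u_n - u\|_{2\sigma+2}^{2\sigma+2}\,ds$ on the left (supplying the $L^2([0,T];V)$ and $L^{2\sigma+2}$ convergences); the control-noise term splits as above, the $(u_n - u)\,g(\varphi_n - 1)$ piece feeds into Gronwall via (\ref{g_H_2}), and the residual $2\int_0^t\int_{\mathcal Z} e^{-r(s)}\langle u_n - u,\, u\, g\rangle(\varphi_n - \varphi)\nu(dz)ds$ is $\sup_t$-small by the weak convergence claim above (with $\phi = u_n - u$; the $\|u_n - u\|^2$ half of Young's inequality is absorbed into the Gronwall factor). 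Gronwall then yields $\sup_{t\in[0,T]}\|u_n(t) - u(t)\|^2 \to 0$.

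The main obstacle is the weak convergence claim for $\int_0^t\int_{\mathcal Z}\bar u\, g(\varphi_n - 1)\nu(dz)ds$: testing against $\phi \in H$, and using that $g(z)$ is a complex scalar, reduces it to showing $\int_0^T \beta(s)\int_{\mathcal Z} h(z)(\varphi_n - \varphi)\nu(dz)\,ds \to 0$ for $h \in \{\textup{Re}\,g,\textup{Im}\,g\}$ and $\beta(s) \in L^2([0,T])$ given by the real or imaginary part of $\int\bar u(s)\bar\phi\,dx$, which is not directly covered by Lemma \ref{3.11}. The cleanest route is to approximate $\beta$ in $L^2([0,T])$ by step functions $\beta^{(k)}$; for each fixed $k$ the double integral becomes a finite sum of one-dimensional integrals in $z$ to which Lemma \ref{3.11} applies, and the residual $\int|\beta - \beta^{(k)}|\int_{\mathcal Z}|g|(\varphi_n + 1)\nu(dz)ds$ is controlled uniformly in $n$ by the uniform bound (\ref{g_H_2}), so a standard $(k,n)$-limit closes the argument.
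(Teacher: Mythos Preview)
Your strategy is essentially the paper's: establish uniform bounds by rerunning Lemmas \ref{Estimate-lemm-01}--\ref{Estimate-lemm-03} and the $W^{\delta,2}$ estimates on the skeleton equations, extract a subsequential limit $\bar u$ by compactness, identify $\bar u=u^\varphi$ via the monotone test-function device of Proposition \ref{Main-thm-01}, and then upgrade to $C([0,T];H)\cap L^2([0,T];V)\cap L^{2\sigma+2}$ by an energy inequality plus Gronwall on $\|u^{\varphi_n}-u^\varphi\|^2$. Your careful treatment of the time-dependence in Lemma \ref{3.11} via a step-function approximation is a legitimate concern, and in fact the paper's own inequality (\ref{I_n_2-I_n_1}) is written carelessly on this very point.

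The one place your write-up diverges, and becomes unclear, is the residual $\int_0^t\int_{\mathcal Z}e^{-r(s)}\langle u_n-u,\,u\,g\rangle(\varphi_n-\varphi)\,\nu(dz)ds$ in the upgrade step. Invoking the weak convergence claim with $\phi=u_n-u$ does not work directly, since $\phi$ depends on $n$; and a Young split leaves a term like $\int\|u\|^2|g||\varphi_n-\varphi|$ which need not vanish (weak convergence in $S^N$ does not give $L^1$ convergence of $|g||\varphi_n-\varphi|$). The paper sidesteps Lemma \ref{3.11} here entirely: it writes the cross term as $I_6^n=\mathrm{Re}\int_{\mathcal Z}\langle u^{\varphi_n}g(\varphi_n-\varphi),X_n\rangle\nu(dz)$, bounds $|I_6^n(s)|\le \|u^{\varphi_n}(s)\|\,\|X_n(s)\|\,|g|\bigl(|\varphi_n-1|+|\varphi-1|\bigr)$, and then uses the $A_{n,\varepsilon}$-split on $\|X_n(s)\|$ together with (\ref{g_H_2})--(\ref{g_H_3}), exploiting the \emph{already established} strong convergence $X_n\to 0$ in $L^2([0,T];V)$ from the compactness step. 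This gives $\int_0^T|I_6^n|\,ds\to 0$ and Gronwall finishes. That route is both simpler and avoids the time-dependent-test-function issue you raised; I recommend adopting it for the residual. Note also that the paper carries out the upgrade with the unweighted energy $\|X_n\|^2$ (using Lemma 3.6 of \cite{LG} directly to bound $I_4^n$) rather than your $e^{-r(t)}$-weighted version, but either choice works.
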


\begin{proof}
Using the same arguments in Subsection 3.2 and combining with Lemma \ref{3.1}, we can deduce that for any $\delta\in(0,\frac{1}{2})$ and $2\leq p<2\sigma$, there exist some positive constants $C_{4,1}(N, \|u(0)\|)$, $C_{4,2}(N, \|\nabla u(0)\|)$ and $C_{4,3}(N, \delta)$ such that
\begin{itemize}
\item[(a)]
\begin{equation}\label{4444}
\begin{split}
&\sup\limits_{0\leq s\leq T}\|u^{\varphi_n}(s)\|^2+\int_0^T\|\nabla u^{\varphi_n}(s)\|^2ds+\int_0^T\|u^{\varphi_n}(s)\|_{2\sigma+2}^{2\sigma+2}ds\\
\leq &\, C_{4,1}(N, \|u(0)\|);
\end{split}
\end{equation}
\item[(b)]
\begin{equation}\label{u^n-est}
\begin{split}
 &\sup\limits_{0\leq s\leq t}\|\nabla u^{\varphi_n}(s)\|^p+\int_0^t\|\nabla u^{\varphi_n}(s)\|^{p-2}\|\triangle u^{\varphi_n}(s)\|^2ds\\
&+ \int_0^t\int_D\|\nabla u^{\varphi_n}(s)\|^{p-2}|u^{\varphi_n}(s,x)|^{2\sigma}|\nabla u^{\varphi_n}(s,x)|^2dxds\\
\leq&\,  C_{4,2}(N, \|\nabla u(0)\|);
\end{split}
\end{equation}
\item[(c)]
\begin{align}
\|u^{\varphi_n}\|^2_{W^{\delta,2}([0,T];H)}\leq C_{4,3}(N, \delta).
\end{align}
\end{itemize}
By Lemma \ref{converge-lemm-01}, we can assert that there exist processes $\tilde{u}_1^\varphi\in L^2((0,T];H^2)\cap L^\infty([0,T];V)\cap L^{2\sigma+2}([0,T];L^{2\sigma+2}(D))$, $\widetilde{T}_1\in L^q([0,T];L^q(D))$ and $\widetilde{F}_1\in L^2([0,T];H)$ such that, as $n\rightarrow\infty$,\\
 \begin{itemize}
\item[(i)] $u^{\varphi_n} \rightarrow \tilde{u}_1^\varphi$\,\,weakly\,\,in\,\,$L^2((0,T];H^2)\cap L^{2\sigma+2}([0,T];L^{2\sigma+2}(D))\cap L^p([0,T];V)$;\\
 \item[(ii)] $u^{\varphi_n}$ is weak-star converging to $\tilde{u}_1^\varphi$ in $L^\infty([0,T];V)$;\\
 \item[(iii)] $T(u^{\varphi_n})\rightarrow \widetilde{T}_1$\,\,weakly\,\,in\,\,$L^q([0,T];L^q(D))$;\\
 \item[(iv)] $P_n F(u^{\varphi_n})\rightarrow \widetilde{F}_1$\,weakly\,\,in\,\,$L^2([0,T];H)$;\\
\item[(v)] $u^{\varphi_n}\rightarrow \tilde{u}_1^\varphi$\,\,strongly\,\,in\,\,$L^2([0,T];V)$.\\
\end{itemize}
Our aim is  to show that $\tilde{u}_1^\varphi=u^\varphi$.
To do this,  it suffices to prove that
\begin{equation}\label{eq claim}
\begin{split}
\tilde{u}_1^\varphi(t)=&\,u(0)+ \int^t_0(1+i\alpha)\Delta \tilde{u}_1^\varphi(s)ds+\int^t_0\big(-(1-i\beta)|\tilde{u}_1^\varphi(s)|^{2\sigma}\tilde{u}_1^\varphi(s)+\gamma \tilde{u}_1^\varphi(s))  \big)ds\\
 &+\int^t_0\lambda_1\cdot \nabla (|\tilde{u}_1^\varphi(s)|^2\tilde{u}_1^\varphi(s))+(\lambda_2\cdot \nabla \tilde{u}_1^\varphi(s))|\tilde{u}_1^\varphi(s)|^2ds\\
 &+\int^T_0\int_\mathcal{Z}\tilde{u}_1^\varphi(s)g(z)(\varphi(s,z)-1)\nu(dz)ds.
\end{split}
\end{equation}
Similarly to   the proof of Proposition \ref{Main-thm-01}, to prove \eqref{eq claim}, it suffices to prove the following two identities:
  \begin{itemize}
  \item[(a)]\begin{equation}\label{X-Y-4}
 \begin{split}
& \lim_{n\rightarrow\infty}\Big\|\int_0^t\int_\mathcal{Z}u^{\varphi_n}(s)g(z)\left(\varphi_n(s,z)-1\right)\nu(dz)ds\\
 &\ \ \ \ \ \ \ -\int_0^t\int_\mathcal{Z} \tilde{u}_1^\varphi(s)g(z)(\varphi(s,z)-1)\nu(dz)ds\Big\|_V=0.
\end{split}
\end{equation}
     \item[(b)] For the process $Y^\varphi$   defined by
 \begin{eqnarray*}
Y^\varphi:=(1+i\alpha)\Delta\tilde{u}_1^\varphi+\widetilde{T}_1+\gamma \tilde{u}_1^\varphi,
\end{eqnarray*}
we have
\begin{eqnarray}\label{X-Y-3}
G(\tilde{u}_1^\varphi)=Y^\varphi.
\end{eqnarray}
      \end{itemize}

 Next, we prove \eqref{X-Y-4} and \eqref{X-Y-3} separately.  
Set
\begin{align*}
\widetilde{I^1_n}&:=\int_0^t\int_\mathcal{Z}u^{\varphi_n}(s)g(z)\left(\varphi_n(s,z)-1\right)\nu(dz)ds,\\
\widetilde{I^2_n}&:=\int_0^t\int_\mathcal{Z}u^{\varphi_n}(s)g(z)\left(\varphi(s,z)-1\right)\nu(dz)ds,\\
\widetilde{I}&:=\int_0^t\int_\mathcal{Z} \tilde{u}_1^\varphi(s)g(z)(\varphi(s,z)-1)\nu(dz)ds.
\end{align*}

By (\ref{X-Y-2}), we have
\begin{align}\label{I_n_2-I}
\lim_{n\rightarrow\infty}\|\widetilde{I^2_n}-\widetilde{I}\|_V=0.
\end{align}

Noticing that  $u^{\varphi_n} \in L^\infty([0,T];V)$, by Lemma \ref{3.11} and (\ref{4444}), we have
\begin{equation}\label{I_n_2-I_n_1}
\begin{split}
 &\|\widetilde{I^1_n}-\widetilde{I^2_n}\|_V\\
 \leq&\,\sup_{0\leq t\leq T}\|u^{\varphi_n}(t)\|_V\cdot
\bigg|\int_0^T\int_\mathcal{Z} \left((\varphi_{n}(t,z)-1)-(\varphi(t,z)-1)\right)|g(z)|\nu(dz)dt\bigg|\\
&\, \longrightarrow  0.
\end{split}
\end{equation}
Combining (\ref{I_n_2-I}) and (\ref{I_n_2-I_n_1}), we obtain (\ref{X-Y-4}).

Recall $r$ defined in (\ref{r-set-definition}). By the proof of (\ref{X-Y-1}), to prove \eqref{X-Y-3}, it suffices to  to prove
\begin{equation}\label{3344}
 \begin{split}
&\lim_{n\rightarrow\infty}\left[\int_0^T\int_\mathcal{Z} e^{-r(s)}\big< u^{\varphi_n}(s)g(z)(\varphi_n(s,z)-1),u^{\varphi_n}(s)\big>\nu(dz)ds\right]\\
&\, \,\,  \,\,\,\,\,-\int_0^T\int_\mathcal{Z} e^{-r(s)}\big< u^{\varphi_n}(s)g(z)(\varphi_n(s,z)-1),\tilde{u}_1^\varphi(s)\big>\nu(dz)ds=0.
\end{split}
\end{equation}
A straightforward calculation gives
\begin{align*}
&\,\,\,\,\,\,\bigg|\int_0^T\int_\mathcal{Z} e^{-r(s)}\langle u^{\varphi_n}(s)g(z)(\varphi_n(s,z)-1),u^{\varphi_n}(s)\rangle\nu(dz)ds\\
&\,\,\,\,\,\,-\int_0^T\int_\mathcal{Z} e^{-r(s)}\langle u^{\varphi_n}(s)g(z)(\varphi_n(s,z)-1),\tilde{u}_1^\varphi(s)\rangle\nu(dz)ds\bigg|\\
&\leq\int_0^T\int_\mathcal{Z} e^{-r(s)}\| u^{\varphi_n}(s)\||g(z)||\varphi_n(s,z)-1|\|u^{\varphi_n}(s)-\tilde{u}_1^\varphi(s)\|\nu(dz)ds\\
&\leq\sup_{0\leq s\leq T}\|u^{\varphi_n}(s)\|\int_0^T\int_\mathcal{Z} e^{-r(s)}\|u^{\varphi_n}(s)-\tilde{u}_1^\varphi(s)\||g(z)||\varphi_n(s,z)-1|\nu(dz)ds.
\end{align*}
By using the same technique in the proof of (\ref{X-Y-2}) and (\ref{normal_0}), we get (\ref{3344}).  As mentioned before, by   (\ref{X-Y-4}) and (\ref{X-Y-3}), we obtain $\tilde{u}_1^\varphi=u^\varphi$.

Next, we  prove that $u^{\varphi_n}\rightarrow u^\varphi$ in $C([0,T]; H)\cap L^{2\sigma+2}([0,T], L^{2\sigma+2}(D))$.
Denote $X_n:=u^{\varphi_n}-u^\varphi$. Then applying the chain rule to $\|X_n(s)\|^2$ and taking the real part, we obtain
 \begin{align*}
\frac{1}{2}\frac{d\|X_n(s)\|^2}{ds}
&=\,\textmd{Re}\langle G(u^{\varphi_n}(s))-G(u^\varphi(s)),X_n(s)\rangle\\
&\,\,\,\,\,\,+\,\textmd{Re}\int_{\mathcal{Z}}\langle u^{\varphi_n}(s)g(z)(\varphi_n(s,z)-1)-u^\varphi(s)g(z)(\varphi(s,z)-1),X_n(s)\rangle\\
&=\,\textmd{Re}\langle(1+i\alpha)(\Delta X_n(s),X_n(s)\rangle\\
&\,\,\,\,\,\,-\,\textmd{Re}\langle(1-i\beta)(|u^{\varphi_n}(s)|^{2\sigma}u^{\varphi_n}(s)-| u^\varphi(s))|^{2\sigma}u^\varphi(s)),X_n(s)\rangle\\
&\,\,\,\,\,\,+\gamma\,\textmd{Re}\langle X_n(s),X_n(s)\big>+\textmd{Re}\big<X_n(s),F(u^{\varphi_n}(s))-F(u^{\varphi}(s))\rangle\\
&\,\,\,\,\,\,+\,\textmd{Re}\int_{\mathcal{Z}}\langle X_n(s)g(z)(\varphi(s,z)-1),X_n(s)\rangle\nu(dz)\\
&\,\,\,\,\,\,+\,\textmd{Re}\int_{\mathcal{Z}}\langle u^{\varphi_n}(s)(g(z)(\varphi_n(s,z)-1)-g(z)(\varphi(s,z)-1)),X_n(s)\rangle\nu(dz)\\
&=:I^n_1+I^n_2+I^n_3+I^n_4+I^n_5+I^n_6.
\end{align*}

By calculating directly, we obtain $I^n_1(s)
=-\|\nabla X_n(s)\|^2$, $I^n_3(s)=\gamma\|X_n(s)\|^2$ and
\begin{align*}
I^n_5(s)
&\leq\int_{\mathcal{Z}}\|X_n(s)\|^2|g(z)||\varphi(s,z)-1|\nu(dz)\\
&=\|X_n(s)\|^2\int_{\mathcal{Z}}|g(z)||\varphi(s,z)-1|\nu(dz).
\end{align*}

  If $0<|\beta|<\frac{\sqrt{2\sigma+1}}{\sigma}$,  then by Lemma 3.5 of \cite{LG} we have
\begin{eqnarray*}
I^n_2(s)
\leq-\left(1-\frac{\sigma}{\sqrt{2\sigma+1}}|\beta|\right)2^{-2\sigma}\|X_n(s)\|^{2\sigma+2}_{2\sigma+2}<0.
\end{eqnarray*}

By using Lemma 3.6 in \cite{LG},   there exist   small enough positive parameters $\hat{\e}_1,\hat{\e}_2$ such that
\begin{equation}\label{Const4-4}
\begin{split}
I^n_4(s) \leq&\,  2\hat{\e}_1\|\nabla X_n(s)\|^2+2\hat{\e}_2\|X_n(s)\|^{2\sigma+2}_{2\sigma+2}\\
&+\Big(C_{4,4}(\e_5)+\e_6\|u^\varphi(s)\|^2+C_{4,5}(\e_7)\|\nabla u^\varphi(s)\|^{\frac{2\sigma}{\sigma-1}}\\
&+C_{4,6}(\e_8)\|\nabla u^\varphi(s)\|^{\frac{7\sigma+2}{\sigma+1}}+C_{4,7}(\e_9)\|\nabla u^\varphi(s)\|^{\frac{10\sigma+4}{\sigma+4}}\Big)\|X_n(s)\|^2\\
\leq &\, 2\hat{\e}_1\|\nabla X_n(s)\|^2+2\hat{\e}_2\|X_n(s)\|^{2\sigma+2}_{2\sigma+2}+C_{4,8}(\|\nabla u(0)\|^p+1)\|X_n(s)\|^2,
\end{split}
\end{equation}
where $2<\frac{7\sigma+2}{\sigma+1}<2\sigma$, $2<\frac{10\sigma+4}{\sigma+4}<2\sigma$, $\e_i\in(0,\infty)$,  $i=5,\ldots,9$ and $C_{4,j}\in(0,\infty)$, $j=4,\ldots,8$.  By using the method used in the proof of (\ref{X-Y-2}), we have
\begin{align*}
 \int^T_0|I^n_6(s)|ds
 \leq&\, \int^T_0\int_{\mathcal{Z}}\|u^{\varphi_n}(s)\||g(z)||\varphi_n(s,z)-1|\|X_n(s)\|\nu(dz)ds\\
&\,\,\,\,\,\,+\int^T_0\int_{\mathcal{Z}}\|u^{\varphi_n}(s)\||g(z)||\varphi(s,z)-1|\|X_n(s)\|\nu(dz)ds\\
 \leq&\, \sup_{s\in[0,T]}\|u^{\varphi_n}(s)\|\int^T_0\int_{\mathcal{Z}}|g(z)||\varphi_n(s,z)-1|\|X_n(s)\|\nu(dz)ds\\
&\,\,\,\,\,\,+\sup_{s\in[0,T]}\|u^{\varphi_n}(s)\|\int^T_0\int_{\mathcal{Z}}|g(z)||\varphi(s,z)-1|\|X_n(s)\|\nu(dz)ds\\
 \leq&\, \sup_{s\in[0,T]}\|u^{\varphi_n}(s)\|\bigg[2M\int_{A_{n,\e_4}}\int_{\mathcal{Z}}|g(z)||\varphi_n(s,z)-1|\nu(dz)ds\\
&\,\,\,\,\,\,+\e_4\int_{A^c_{n,\e_4}}\int_{\mathcal{Z}}|g(z)||\varphi_n(s,z)-1|\|\nu(dz)ds\bigg]\\
&\,\,\,\,\,\,+\sup_{s\in[0,T]}\|u^{\varphi_n}(s)\|\bigg[2M\int_{A_{n,\e_4}}\int_{\mathcal{Z}}|g(z)||\varphi(s,z)-1|\nu(dz)ds\\
&\,\,\,\,\,\,+\e_4\int_{A^c_{n,\e_4}}\int_{\mathcal{Z}}|g(z)||\varphi(s,z)-1|\|\nu(dz)ds\bigg].
\end{align*}
Since $\e_4$ is arbitrary and (\ref{4444}) holds, by Lemma \ref{3.1},  we obtain $$\int^T_0|I^n_6(s)|ds\rightarrow0 \ \ \ \ \text{as }n\rightarrow\infty.$$
Therefore, we have
\begin{align*}
&\frac{1}{2}\frac{d\|X_n(s)\|^2}{ds}
+\|\nabla X_n(s)\|^2
+\left(1-\frac{\sigma}{\sqrt{2\sigma+1}}|\beta|\right)2^{-2\sigma}\|X_n(s)\|^{2\sigma+2}_{2\sigma+2}\\
 \leq&\,\gamma\|X_n(s)\|^2+I^n_4(s)+\|X_n(s)\|^2\int_{\mathcal{Z}}|g(z)||\varphi(s,z)-1|\nu(dz)
+I^n_6(s).
\end{align*}
By Gronwall's inequality and (\ref{g_H_2}) in Lemma \ref{3.1}, as $n\rightarrow\infty$,
\begin{align*}
&\sup_{t\in[0,T]}\|X_n(t)\|^2+\int^T_0\|\nabla X_n(s)\|^2ds+\int^T_0\left(1-\frac{\sigma}{\sqrt{2\sigma+1}}|\beta|\right)2^{-2\sigma}\| X_n(s)\|^{2\sigma+2}_{2\sigma+2}ds\\
 \leq&\, \exp\left[\int^T_0\left(2\gamma+C_{4,8}(\|\nabla u(0)\|^p+1)+2\int_{\mathcal{Z}}|g(z)||\varphi(s,z)-1|\nu(dz)\right)ds\right]\int^T_0|I^n_6(s)|ds\\
 \leq &\, e^{\left(2\gamma T+C_{4,8}(\|\nabla u(0)\|^p+1)T+2C^0_2\right)}\int^T_0|I^n_6(s)|ds\rightarrow0.
\end{align*}
The proof is complete.
\end{proof}

\section{ Verification of Condition \ref{LDP} (b) }
   Recall   $\tilde{\mathbb{A}}^N, N\ge1$ in Subsection 2.2. For any $\varphi_\e \in \tilde{\mathbb{A}}^N$, consider the following controlled SPDE
\begin{equation}\label{eq SPDE 02}
\begin{split}
d \tilde{u}^\e (t)
&=A(\tilde{u}^\e (t))dt
+B(\tilde{u}^\e (t))dt+\int_{\ZZ}g(z)\tilde{u}^\e (t)(\varphi_\e (t,z)-1)\nu(dz)dt \\
&\,\,\,\,\,\,+\e \int_{\ZZ}g(z)\tilde{u}^\e (t-)\tilde{\eta}^{\e ^{-1}\varphi_\e }(dz,dt),
\end{split}
\end{equation}
with the initial condition $\widetilde{u}^\e (0)=u(0)$.

Let $\nu_{\e}=\frac{1}{\varphi_{\e}}$. The following
lemma is taken  from \cite[Lemma 2.3]{Budhiraja-Dupuis-Maroulas.}.
\begin{lemma}Let
\begin{eqnarray*}
\mathcal{E}^{\e}_{t}(\nu_{\e})&:=&\exp\Big\{\int_{(0,t]\times \ZZ\times
[0,\e^{-1}\varphi_{\e}]}\log(\nu_{\e}(s,x))
\bar{\eta}(ds\;dx\;dr)\\
&&\qquad +\int_{(0,t]\times \ZZ\times
[0,\e^{-1}\varphi_{\e}]}(-\nu_{\e}(s,x)+1)\bar{\nu}_{T}(ds\;dx\;dr)\Big\}.
\end{eqnarray*}
Then
$$
\mathbb{Q}^{\e}_{t}(G)=\int_{G}\mathcal{E}^{\e}_{t}(\vartheta_{\e})\;d\bar{\mathbb{P}},
\quad
\text{for}\quad G\in \mathcal{B}(\bar{\mathbb{M}})
$$
defines a probability measure on $\bar{\mathbb{M}}$.
\end{lemma}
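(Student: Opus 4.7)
The goal is to verify that $\mathbb{Q}_t^\e$ is a probability measure on $\bar{\mathbb{M}}$. Non-negativity is immediate from $\mathcal{E}_t^\e(\nu_\e) \geq 0$, so the whole content lies in establishing the normalization $\bar{\mathbb{E}}[\mathcal{E}_t^\e(\nu_\e)] = 1$. My plan is to recognize $\mathcal{E}_t^\e(\nu_\e)$ as a Doléans-Dade exponential for the Poisson random measure $\bar{\eta}$ on $\mathbb{Y}_T = [0,T]\times\ZZ\times[0,\infty)$, whose compensator is $\bar{\nu}_T = \lambda_T \otimes \nu \otimes \lambda_\infty$, and then upgrade to a true martingale using the boundedness of $\varphi_\e$.

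Concretely, I would introduce the $\bar{\mathcal{P}}\otimes\mathcal{B}(\ZZ)\otimes\mathcal{B}([0,\infty))$-measurable integrand
$$h(s,x,r) := (\nu_\e(s,x)-1)\, 1_{[0,\e^{-1}\varphi_\e(s,x)]}(r),$$
so that $h > -1$, and $\log(1+h) = \log \nu_\e$ on the support of the indicator while $h = 0$ off of it. The exponent defining $\mathcal{E}_t^\e(\nu_\e)$ then rewrites as $\int_{\mathbb{Y}_t} \log(1+h)\, d\bar{\eta} - \int_{\mathbb{Y}_t} h\, d\bar{\nu}_T$, which is the standard stochastic-exponential form. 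Applying Itô's formula for jump processes (or a direct jump computation) yields
$$\mathcal{E}_t^\e(\nu_\e) = 1 + \int_0^t \int_{\ZZ\times[0,\infty)} \mathcal{E}_{s-}^\e(\nu_\e)\, h(s,x,r)\, (\bar{\eta}-\bar{\nu}_T)(ds\,dx\,dr),$$
identifying $\mathcal{E}_t^\e(\nu_\e)$ as a non-negative local martingale with respect to $\{\bar{\mathcal{F}}_t\}$.

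The main obstacle, as always in Girsanov-type statements, is upgrading from local to true martingale, i.e.\ proving $\bar{\mathbb{E}}[\mathcal{E}_t^\e(\nu_\e)] = 1$. This is exactly where the hypothesis $\varphi_\e \in \tilde{\mathbb{A}}^N \subset \bar{\mathcal{A}}_b$ is decisive: by definition of $\bar{\mathcal{A}}_{b,n}$, there exists some $n$ such that $1/n \leq \varphi_\e \leq n$ on $[0,T] \times K_n$ and $\varphi_\e \equiv 1$ on $[0,T]\times K_n^c$, so $\nu_\e = 1/\varphi_\e$ is uniformly bounded and equals $1$ off the set $[0,T]\times K_n$ which has finite $\lambda_T\otimes\nu$-measure. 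This uniform control bounds both the jump intensity and the $\bar{\nu}_T$-integral of $|h|$ on $[0,T]$, which allows me to localize by a reducing sequence of stopping times $\tau_k$, verify the uniform integrability of $\{\mathcal{E}_{t\wedge\tau_k}^\e(\nu_\e)\}_k$, and pass to the limit in the identity $\bar{\mathbb{E}}[\mathcal{E}_{t\wedge\tau_k}^\e(\nu_\e)]=1$ by dominated convergence. Combined with $\mathcal{E}_0^\e(\nu_\e) = 1$, this yields the martingale property and hence $\mathbb{Q}_t^\e(\bar{\mathbb{M}}) = 1$.
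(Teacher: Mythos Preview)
The paper does not prove this lemma at all: it is simply quoted as \cite[Lemma 2.3]{Budhiraja-Dupuis-Maroulas.}, with no argument given. So there is no ``paper's own proof'' to compare against.

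Your outline is the standard route and is essentially how the cited result is established: identify $\mathcal{E}^\e_t(\nu_\e)$ as the Dol\'eans--Dade exponential of the compensated integral of $h(s,x,r)=(\nu_\e(s,x)-1)\,1_{[0,\e^{-1}\varphi_\e(s,x)]}(r)$ against $\bar\eta-\bar\nu_T$, obtain the non-negative local martingale property, and then use the boundedness built into $\varphi_\e\in\bar{\mathcal{A}}_b$ (so $\nu_\e=1/\varphi_\e$ is bounded above and below and equals $1$ off a set of finite $\lambda_T\otimes\nu$-measure) to upgrade to a true martingale. One small quibble: in your last step you invoke both ``uniform integrability'' and ``dominated convergence'' for the same passage to the limit; these are different mechanisms, and in this bounded setting the cleanest justification is an $L^2$ bound (or a L\'epingle--M\'emin/Novikov-type criterion), which yields uniform integrability directly. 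With that clarification your sketch is correct.
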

Since $\e
\eta^{\e^{-1}\varphi_{\e}}$ under $\mathbb{Q}^{\e}_{T}$ has the same law as
that of $\e \eta^{\e^{-1}}$ under $\bar{\mathbb{P}}$,  $\tilde{u}^\e=\mathcal{G}^{\e}\big(\e \eta^{\e ^{-1}\varphi_\e}\big)$ solves following controlled stochastic generalized Ginzburg-Landau equation:
\begin{equation}\label{eq SPDE 02}
\begin{split}
 \tilde{u}^\e (t)
&=u(0)+\int^t_0(1+i\alpha)\Delta\tilde{u}^\e (s)ds-\int^t_0(1-i\beta)|\tilde{u}^\e(s)|^{2\sigma} \tilde{u}^\e(s)ds\\
&\,\,\,\,\,+\gamma\int^t_0\tilde{u}^\e(s)ds
+\int^t_0\left(\lambda_1\cdot \nabla (|\tilde{u}^\e(s)|^2\tilde{u}^\e(s))+(\lambda_2\cdot \nabla \tilde{u}^\e(s))|\tilde{u}^\e(s)|^2\right)ds\\
&\,\,\,\,\,+\int^t_0\int_{\ZZ}g(z)\tilde{u}^\e (s-)
\left(\epsilon \eta^{\e ^{-1}\varphi_\e }(dz,ds)-\nu(dz)ds\right)\\
&=u(0)+\int^t_0(1+i\alpha)\Delta\tilde{u}^\e (s)ds-\int^t_0(1-i\beta)|\tilde{u}^\e(s)|^{2\sigma} \tilde{u}^\e(s)ds\\
&\,\,\,\,\,+\gamma\int^t_0\tilde{u}^\e(s)ds
+\int^t_0\int_{\ZZ}g(z)\tilde{u}^\e (s)(\varphi_\e (s,z)-1)\nu(dz)ds\\
&\,\,\,\,\,+\int^t_0\left(\lambda_1\cdot \nabla (|\tilde{u}^\e(s)|^2\tilde{u}^\e(s))+(\lambda_2\cdot \nabla \tilde{u}^\e(s))|\tilde{u}^\e(s)|^2\right)ds\\
&\,\,\,\,\,+\int^t_0\int_{\ZZ}\epsilon g(z)\tilde{u}^\e (s-)\left(\eta^{\e ^{-1}\varphi_\e }(dz,ds)-\e ^{-1}\varphi_\e (s,z)\nu(dz)ds\right).
\end{split}
\end{equation}

\begin{lemma}\label{Lemma4-2}
Assume $u(0)\in V$ is deterministic. There exists $\e_0>0$ such that
\begin{eqnarray}\label{eq SPDE 10}
\sup_{0<\e<\e_0} \mathbb{E}\left[\sup_{0<t<T}\|\tilde{u}^\e (t)\|^2
+\int^T_0\|\nabla\tilde{u}^\e (t)\|^2dt+\int^T_0\|\tilde{u}^\e (t)\|^{2\sigma+2}_{2\sigma+2}dt\right]<\infty.
\end{eqnarray}
\end{lemma}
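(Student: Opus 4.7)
The plan is to apply It\^o's formula for jump processes to $\|\tilde u^\e(t)\|^2$, which produces four pieces: the drift generated by $A+B$, the drift generated by the controlled intensity $\int_{\ZZ}g(z)\tilde u^\e(\varphi_\e-1)\nu(dz)$, the second-order It\^o correction from the jumps, and a compensated Poisson martingale. For the $A+B$ drift, I apply the pointwise computation carried out in the proof of Lemma \ref{Estimate-lemm-01} verbatim to obtain
$$
2\,\textmd{Re}\langle \tilde u^\e(s),\,A\tilde u^\e(s)+B\tilde u^\e(s)\rangle \le -\tfrac{3}{2}\|\nabla \tilde u^\e(s)\|^2 - \tfrac{3}{2}\|\tilde u^\e(s)\|^{2\sigma+2}_{2\sigma+2} + C\|\tilde u^\e(s)\|^2.
$$
The control drift is bounded pointwise by $2\|\tilde u^\e(s)\|^2\int_\ZZ|g(z)||\varphi_\e(s,z)-1|\nu(dz)$, whose time integral is uniformly bounded in $\varphi_\e\in \tilde{\mathbb{A}}^N$ by Lemma \ref{3.1}\,(2)-(3) (covering $[0,T]$ by finitely many sub-intervals of length $<\delta$). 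The second-order It\^o correction equals $\e\,\|\tilde u^\e(s-)\|^2\int_\ZZ|g(z)|^2\varphi_\e(s,z)\nu(dz)$; by Lemma \ref{3.1}\,(1) its time integral is at most $\e\,C_1^0$, so taking $\e_0$ small renders it innocuous.

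For the compensated Poisson martingale $M_t=\int_0^t\!\int_\ZZ(|1+\e g(z)|^2-1)\|\tilde u^\e(s-)\|^2\tilde\eta^{\e^{-1}\varphi_\e}(ds,dz)$, I apply the Burkholder--Davis--Gundy inequality
$$
\mathbb E\sup_{s\le t}|M_s|\le C_{\mathrm{BDG}}\,\mathbb E\Bigl(\int_0^t\!\!\int_\ZZ(|1+\e g|^2-1)^2\|\tilde u^\e(s-)\|^4\e^{-1}\varphi_\e\nu(dz)\,ds\Bigr)^{1/2}.
$$
Extracting $\sqrt{\sup_{s\le t}\|\tilde u^\e\|^2}$ by Cauchy--Schwarz and using $\sqrt{ab}\le \eta a+b/(4\eta)$ with $\eta$ small lets me absorb a portion of $\mathbb E\sup\|\tilde u^\e\|^2$ into the left-hand side. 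What remains is $\int_0^t\!\int_\ZZ(\e^2|g|^2+\e^4|g|^4)\varphi_\e\nu\,dz\,ds$: the quadratic term is bounded by $\e^2 C_1^0$ via Lemma \ref{3.1}\,(1), while the quartic term is handled by the Fenchel--Young inequality $ab\le l(a)+l^*(b)$ with $l^*(s)=e^s-1$ applied to $(\varphi_\e,\delta|g|^4)$. Because $\varphi_\e\in\bar{\mathcal{A}}_b$, the integrand $\varphi_\e-1$ vanishes outside some compact $K_n$ with $\nu(K_n)<\infty$, so Condition \ref{Condition Exponential Integrability} and $L_T(\varphi_\e)\le N$ together yield a uniform bound on this quartic piece.

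After localizing via $\tau_R=\inf\{t:\|\tilde u^\e(t)\|>R\}$ to justify the suprema, assembling these estimates produces an integral inequality of the form
$$
\mathbb E\sup_{s\le t}\|\tilde u^\e(s)\|^2+\mathbb E\!\int_0^t\!\!\|\nabla \tilde u^\e\|^2ds+\mathbb E\!\int_0^t\!\!\|\tilde u^\e\|^{2\sigma+2}_{2\sigma+2}ds\le C\|u(0)\|^2+C+C\!\int_0^t\!\mathbb E\sup_{r\le s}\|\tilde u^\e(r)\|^2ds,
$$
with $C$ independent of $\e\in(0,\e_0)$ and of $\varphi_\e\in\tilde{\mathbb A}^N$. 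Gronwall's inequality, followed by letting $R\to\infty$ via monotone convergence, then yields \eqref{eq SPDE 10}. The main technical obstacle is the quartic term $\int |g|^4\varphi_\e\nu$ emerging from the BDG quadratic variation, which is not covered directly by Lemma \ref{3.1}; its control relies crucially on the exponential integrability hypothesis in Condition \ref{Condition Exponential Integrability} together with the fact that $\varphi_\e-1$ is compactly supported on the set where $\nu$ is finite.
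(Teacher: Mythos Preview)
Your overall architecture matches the paper's: It\^o's formula on $\|\tilde u^\e\|^2$, the $A+B$ drift handled exactly as in Lemma \ref{Estimate-lemm-01}, the control drift absorbed via Lemma \ref{3.1}, and the martingale controlled by BDG plus Young's inequality, with a fraction of $\mathbb E\sup\|\tilde u^\e\|^2$ absorbed on the left. The only substantive discrepancy is your treatment of the martingale, and there the quartic step has a genuine gap.

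You write the full jump martingale as $M_t=\int_0^t\!\int_\ZZ(|1+\e g|^2-1)\|\tilde u^\e(s-)\|^2\,\tilde\eta^{\e^{-1}\varphi_\e}$ and apply BDG. The quadratic variation then contains $(|1+\e g|^2-1)^2\lesssim \e^2|g|^2+\e^4|g|^4$, and after passing to the compensator you are left needing a bound on $\int_{\ZZ_T}|g(z)|^4\varphi_\e(s,z)\,\nu(dz)ds$. Your Fenchel--Young argument does not close: the inequality $\varphi_\e\,(\delta'|g|^4)\le l(\varphi_\e)+e^{\delta'|g|^4}-1$ requires integrability of $e^{\delta'|g|^4}$, whereas Condition \ref{Condition Exponential Integrability} only supplies $\int_E e^{\delta|g|^2}\nu<\infty$ on sets of finite measure. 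That hypothesis yields all polynomial moments of $|g|$ on such $E$, but not exponential moments of $|g|^4$. Falling back on $\varphi_\e\le n$ on $K_n$ gives a bound depending on $n=n(\varphi_\e)$, hence not uniform over $\tilde{\mathbb A}^N$; and on $K_n^c$ one only knows $\int|g|^2\nu<\infty$, so $\int_{K_n^c}|g|^4\nu$ may be infinite.

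The paper sidesteps this entirely by \emph{splitting} the martingale into the linear piece $I_3^\e=\int 2\langle \e g\tilde u^\e(s-),\tilde u^\e(s)\rangle\,\tilde\eta^{\e^{-1}\varphi_\e}$ and the quadratic piece $I_4^\e=\int\|\e g\tilde u^\e(s-)\|^2\,\tilde\eta^{\e^{-1}\varphi_\e}$. BDG is applied only to $I_3^\e$, whose quadratic variation involves merely $\e^2|g|^2$ and is controlled by $C_1^0$ from Lemma \ref{3.1}(1). For $I_4^\e$ the paper foregoes BDG and uses the crude bound
\[
\mathbb E\sup_{t\le T}|I_4^\e(t)|\le \mathbb E\!\int_0^T\!\!\int_\ZZ \e^2|g|^2\|\tilde u^\e(s-)\|^2\,\eta^{\e^{-1}\varphi_\e}+\e\,\mathbb E\!\int_0^T\!\!\int_\ZZ|g|^2\|\tilde u^\e\|^2\varphi_\e\,\nu\,ds
= 2\e\,\mathbb E\!\int|g|^2\|\tilde u^\e\|^2\varphi_\e\,\nu_T\le 2\e C_1^0\,\mathbb E\sup_t\|\tilde u^\e\|^2,
\]
so only $|g|^2$ ever appears. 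A secondary structural difference is that the paper applies Gronwall \emph{pathwise} first---the exponent contains only $\int|g||\varphi_\e-1|\nu_T\le C_2^0$ and $\e\int|g|^2\varphi_\e\nu_T\le \e C_1^0$, both deterministic---and takes expectation afterwards; your order (expectation then Gronwall) also works, but the paper's order makes the absorption of $\mathbb E\sup\|\tilde u^\e\|^2$ cleaner.
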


\begin{proof}
Applying the It\^{o} formula to $\|\tilde{u}^\e(t)\|^2$ and taking the real part, by (\ref{eq SPDE 02}), we have
\begin{align*}
\|\tilde{u}^\e(t)\|^2
&=\|u(0)\|^2+ 2\textmd{Re}\int^t_0\langle(1+i\alpha)\Delta\tilde{u}^\e(s),\tilde{u}^\e(s)\rangle ds\\
&\,\,\,\,\,-2\textmd{Re}\int^t_0\langle(1-i\beta)|\tilde{u}^\e(s)|^{2\sigma}\tilde{u}^\e(s),\tilde{u}^\e(s)\rangle ds
+2\gamma\int^t_0\langle\tilde{u}^\e(s),\tilde{u}^\e(s)\rangle ds\\
&\,\,\,\,\,+2\textmd{Re}\int^t_0\langle\left(\lambda_1\cdot \nabla (|\tilde{u}^\e(s)|^2\tilde{u}^\e(s))+(\lambda_2\cdot \nabla \tilde{u}^\e(s))|\tilde{u}^\e(s)|^2\right),\tilde{u}^\e(s)\rangle ds\\
&\,\,\,\,\,+2\int^t_0\int_{\ZZ}\langle\tilde{u}^\e(s)g(z)(\varphi_\varepsilon(s,z)-1),\tilde{u}^{\e}(s)\rangle\nu(dz)ds\\
&\,\,\,\,\,+2\int^t_0\int_{\ZZ}\langle\e g(z)\tilde{u}^\e(s-),\tilde{u}^\e(s)\rangle\left(\eta^{\e^{-1}\varphi_\e}(dz,ds)-\e^{-1}\varphi_\e(s,z)\nu(dz)ds\right)\\
&\,\,\,\,\,+\int^t_0\int_{\ZZ}\|\e g(z)\tilde{u}^\e(s-)\|^2\left(\eta^{\e^{-1}\varphi_\e}(dz,ds)-\e^{-1}\varphi_\e(s,z)\nu(dz)ds\right)\\
&\,\,\,\,\,+\e\int^t_0\int_{\ZZ}|g(z)|^2\|\tilde{u}^\e(s-)\|^2\varphi_\varepsilon(s,z)\nu(dz)ds\\
&=:\|u(0)\|^2-\int^t_0\|\nabla\tilde{u}^\e(s)\|^2ds-2\int^t_0\|\tilde{u}^\e(s)\|^{2\sigma+2}_{2\sigma+2}ds+2\gamma\int^t_0\|\tilde{u}^\e(s)\|^2ds\\
&\,\,\,\,\,\,+I^{\e}_1(t)+I^{\e}_2(t)+I^{\e}_3(t)+I^{\e}_4(t)+I^{\e}_5(t).
\end{align*}

For  $I^{\e}_1$ and $I^{\e}_2$,  we have
\begin{align}\label{eq:eqI1new}
I^{\e}_1(t)
\leq\frac{1}{2}\int^t_0\|\nabla \tilde{u}^\e(s)\|^2ds+\frac{1}{2}\int^t_0\|\tilde{u}^\e(s)\|^{2\sigma+2}_{2\sigma+2}ds+C_{5,1}(|\lambda_1|,|\lambda_2|)
\int^t_0\|\tilde{u}^\e(s)\|^2ds
\end{align}
and
\begin{align}\label{eq:eqI2}
I^{\e}_2(t)
  \leq2\int^t_0\int_{\ZZ}\|\tilde{u}^\e(s)\|^2 |g(z)||\varphi_\varepsilon(s,z)-1|\nu(dz)ds.
\end{align}

For  $I^{\e}_3(t)$, applying Burkholder-Davis-Gundy's inequality and (\ref{g_H_1}) in Lemma \ref{3.1}, we have
\begin{equation}\label{eq:eqI3}
\begin{split}
&\, \mathbb{E}\left(\sup_{t\in[0,T]}|I^{\e}_3(t)|\right)\\
 \leq&\,  4\mathbb{E}\left(\int^t_0\int_{\ZZ}\big|2\big<\e \tilde{u}^\e(s-)g(z),\tilde{u}^\e(s)\big>\big|^2\eta^{\e^{-1}\varphi_\e}(dz,ds)\right)^{\frac{1}{2}}\\
 \leq &\,  8\mathbb{E}\left(\int^t_0\int_{\ZZ}\e^2\|\tilde{u}^\e(s-)\|^2\|\tilde{u}^\e(s)\|^2| g(z)|^2\eta^{\e^{-1}\varphi_\e}(dz,ds)\right)^{\frac{1}{2}}\\
 \leq&\,  8\mathbb{E}\left(\sup_{t\in[0,T]}\|\tilde{u}^\e(t)\|\left(\int^t_0\int_{\ZZ}\e^2\|\tilde{u}^\e(s-)\|^2| g(z)|^2\eta^{\e^{-1}\varphi_\e}(dz,ds)\right)^{\frac{1}{2}}\right)\\
 \leq &\, \frac{1}{C_{5,2}}\mathbb{E}\left(\sup_{t\in[0,T]}\|\tilde{u}^\e(t)\|^2\right)
+16C_{5,2}\e^2\mathbb{E}\left(\int^t_0\int_{\ZZ}\|\tilde{u}^\e(s-)\|^2|g(z)|^2\eta^{\e^{-1}\varphi_\e}(dz,ds)\right)\\
 =&\,  \frac{1}{C_{5,2}}\mathbb{E}\left(\sup_{t\in[0,T]}\|\tilde{u}^\e(t)\|^2\right)
+16C_{5,2}\e\mathbb{E}\left(\int^t_0\int_{\ZZ}\|\tilde{u}^\e(s-)\|^2|g(z)|^2\varphi_\e(s,z)\nu(dz)ds\right)\\
   \leq&\,  \left(\frac{1}{C_{5,2}}+16\e C_{5,2}\cdot C^0_1\right)\mathbb{E}\left(\sup_{t\in[0,T]}\|\tilde{u}^\e(t)\|^2\right),
\end{split}
\end{equation}
where $C_{5,2}\in (0,\infty)$.

For the fourth term  $I^{\e}_4(t)$, we have
\begin{equation}\label{eq:eqI4}
\begin{split}
\mathbb{E}\left(\sup_{t\in[0,T]}|I^{\e}_4(t)|\right)
 \leq&\, \mathbb{E}\left(\int^t_0\int_{\ZZ}\|\e\tilde{u}^\e(s-)\|^2|g(z)|^2\eta^{\e^{-1}\varphi_\e}(dz,ds)\right)\\
&\,\,\,\,\,\,+\e\mathbb{E}\left(\int^t_0\int_{\ZZ}\|\tilde{u}^\e(s)\|^2|g(z)|^2\varphi_\varepsilon(s,z)\nu(dz)ds\right)\\
 \leq& \, 2\e\mathbb{E}\left(\int^t_0\int_{\ZZ}\|\tilde{u}^\e(s)\|^2|g(z)|^2\varphi_\varepsilon(s,z)\nu(dz)ds\right)\\
 \leq& \, 2\e C^0_1\mathbb{E}\left(\sup_{t\in[0,T]}\|\tilde{u}^\e(t)\|^2\right).
\end{split}
\end{equation}
For $I^{\e}_5(t)$, we have
\begin{eqnarray}\label{eq:eqI5}
I^{\e}_5(t)=\e\int^t_0\|\tilde{u}^\e(s-)\|^2\int_{\ZZ}|g(z)|^2\varphi_\varepsilon(s,z)\nu(dz)ds.
\end{eqnarray}
By the above inequalities and  Gronwall's inequality, we have
\begin{align*}
&\,\,\,\,\,\|\tilde{u}^\e (t)\|^2
+\frac{3}{2}\int^t_0\|\nabla\tilde{u}^\e (s)\|^2ds+\frac{3}{2}\int^t_0\|\tilde{u}^\e (s)\|^{2\sigma+2}_{2\sigma+2}ds\\
&\leq\left(\|\tilde{u}^\e (0)\|^2+\sup_{t\in[0,T]}|I^{\e}_3(t)|+\sup_{t\in[0,T]}|I^{\e}_4(t)|\right)\\
&\,\,\,\,\,\cdot\exp \bigg((2\gamma+C_{5,1}(|\lambda_1|,|\lambda_2|))T+2\int^t_0\int_{\ZZ}|g(z)||\varphi_\varepsilon(s,z)-1|\nu(dz)ds\\
&\,\,\,\,\,+\e\int^t_0\int_{\ZZ}|g(z)|^2\varphi_\varepsilon(s,z)\nu(dz)ds\bigg)\\
&\leq C_{5,3}\left(\|\tilde{u}^\e (0)\|^2+\sup_{t\in[0,T]}|I^{\e}_3(t)|+\sup_{t\in[0,T]}|I^{\e}_4(t)|\right),
\end{align*}
where $C_{5,3}= e^{\left((2\gamma+C_{5,1}(|\lambda_1|,|\lambda_2|))T+2C^0_2+\e C^0_1\right)}$.
Therefore,  by \eqref{eq:eqI3} and \eqref{eq:eqI4}, we have
\begin{equation}\label{Control-u}
\begin{split}
&\,\,\,\,\,\,\mathbb{E}\left(\sup_{0<t<T}\|\tilde{u}^\e (t)\|^2
+\int^T_0\|\nabla\tilde{u}^\e (s)\|^2ds+\int^T_0\|\tilde{u}^\e (s)\|^{2\sigma+2}_{2\sigma+2}ds\right)\\
\leq &\, C_{5,3}\mathbb{E}\|\tilde{u}^\e (0)\|^2+C_{5,3}\left(2\e C^0_1+\frac{1}{C_{5,2}}+16\e C_{5,2}\cdot C^0_1\right)\cdot\mathbb{E}\sup_{0<t<T}\|\tilde{u}^\e (t)\|^2.
\end{split}
\end{equation}
Choosing constant $C_{5,2}$ large enough and $\e_0>0$ small enough  such that
 \begin{eqnarray*}
C_{5,3}\left(2\e C^0_1+\frac{1}{C_{5,2}}+16\e C_{5,2}\cdot C^0_1\right)<\frac{1}{2},
\end{eqnarray*}
 (\ref{Control-u})  implies   (\ref{eq SPDE 10}).
The proof is complete.
\end{proof}

According to  Proposition \ref{Main-thm-01},  we know that for any  $\varphi_{\e} \in \mathbb{S}$,  there exists a unique solution $u^{\varphi_\e}(t)$ to the following equation:
\begin{align}\label{P-u-1}
\nonumber
u^{\varphi_\e}(t)&=u^{\varphi_\e}(0)+ \int^t_0(1+i\alpha)\Delta u^{\varphi_\e}(s)ds\\
\nonumber
&\,\,\,\,\,+\int^t_0\big(-(1-i\beta)|u^{\varphi_\e}(s)|^{2\sigma}u^{\varphi_\e}(s)+\gamma u^{\varphi_\e}(s))  \big)ds\\
&\,\,\,\,\,+\int^t_0\lambda_1\cdot \nabla (|u^{\varphi_\e}(s)|^2u^{\varphi_\e}(s))+(\lambda_2\cdot \nabla u^{\varphi_\e}(s))|u^{\varphi_\e}(s)|^2ds\\
\nonumber
&\,\,\,\,\,+\int^T_0\int_\mathcal{Z}g(z)(\varphi_\e(s,z)-1)u^{\varphi_\e}(s)\nu(dz)ds,
\end{align}
with the initial value $u^{\varphi_\e}(0)=u(0)$.
 By the definition of $\mathcal{G}^{\e}$, we have
\begin{eqnarray}\label{define G-epsilon}
\tilde{u}^{\e}=\mathcal{G}^{\e}\Big(\e
\eta^{\e^{-1}\varphi_{\e}}\Big).
\end{eqnarray}

The following proposition, together with Chebychev's inequality,  implies Condition \ref{LDP}(B). 
\begin{proposition}\label{Prop-2}
For $\e>0$, let $\varphi_\e\in\tilde{\mathbb{A}}^N$. Denote that $\mathcal{G}^0(\varphi_\e)=u^{\varphi_\e}$. Then

\begin{eqnarray*}
\lim_{\e\rightarrow0}\mathbb{E}\left(\sup_{t\in[0,T]}\|\tilde{u}^\e (t)-u^{\varphi_\e}(t)\|^2
+\int^T_0\|\tilde{u}^\e (s)-u^{\varphi_\e}(s)\|^2_Vds+\int^T_0\|\tilde{u}^\e (s)-u^{\varphi_\e}(s)\|^{2\sigma+2}_{2\sigma+2}ds\right)=0.
\end{eqnarray*}
\end{proposition}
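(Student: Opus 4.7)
\textbf{Proof proposal for Proposition \ref{Prop-2}.} Set $X_\e(t) := \tilde u^\e(t) - u^{\varphi_\e}(t)$ so that $X_\e(0) = 0$, and subtract \eqref{P-u-1} from the second representation of $\tilde u^\e$ in \eqref{eq SPDE 02}. This yields an equation for $X_\e$ whose drift is $G(\tilde u^\e)-G(u^{\varphi_\e})+\int_\ZZ g(z)X_\e(\varphi_\e-1)\nu(dz)$ and whose jump part is the compensated stochastic integral $\e\int_0^\cdot\int_\ZZ g(z)\tilde u^\e(s-)\tilde\eta^{\e^{-1}\varphi_\e}(dz,ds)$. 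The plan is to apply It\^o's formula not to $\|X_\e\|^2$ directly but to the weighted functional $e^{-r(t)}\|X_\e(t)\|^2$, where $r$ is defined exactly as in \eqref{r-set-definition} with $\xi=u^{\varphi_\e}$; this is the device used in the proof of Proposition \ref{Main-thm-01}, and it linearizes the monotonicity estimate for the nonlinear drift $B$. The estimates \eqref{4444} and \eqref{u^n-est} applied to $u^{\varphi_\e}$ give a deterministic bound $r(T)\le C_N$ uniform in $\e$ (because $\varphi_\e(\omega)\in S^N$ almost surely), so $e^{-r(T)}\ge e^{-C_N}>0$.

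The resulting It\^o expansion decomposes into four parts which I handle in order. First, the combination of the deterministic drift $2\,\textrm{Re}\,e^{-r(s)}\langle G(\tilde u^\e)-G(u^{\varphi_\e}),X_\e\rangle$ with $-e^{-r(s)}r'(s)\|X_\e\|^2$ is estimated by the same application of Lemma 3.5 and Lemma 3.6 of \cite{LG} already used in \eqref{r-set}, producing the negative definite contribution $-(2-2\bar\e_1)e^{-r(s)}\|\nabla X_\e\|^2+2Ke^{-r(s)}\|X_\e\|_{2\sigma+2}^{2\sigma+2}$ with $K<0$. Second, the $\nu$-compensator term is bounded by $2e^{-r(s)}\|X_\e(s)\|^2\int_\ZZ|g(z)||\varphi_\e(s,z)-1|\nu(dz)$, which is Gronwall-integrable in $s$ by \eqref{g_H_2}. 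Third, the purely stochastic martingale term, after expanding by It\^o's formula for jumps, is $\int_0^t\int_\ZZ e^{-r(s)}\bigl(2\e\,\textrm{Re}\langle X_\e(s-),g(z)\tilde u^\e(s-)\rangle+\e^2|g(z)|^2\|\tilde u^\e(s-)\|^2\bigr)\tilde\eta^{\e^{-1}\varphi_\e}(dz,ds)$; I will apply the Burkholder–Davis–Gundy inequality, then Cauchy–Schwarz and Young's inequality to split off a term $\tfrac12\mathbb{E}\sup_t\|X_\e\|^2$ (absorbed on the left) and a remainder of order $\e\,\mathbb{E}\int_0^T\int_\ZZ|g|^2\|\tilde u^\e\|^2\varphi_\e\nu(dz)ds$, which is $O(\e)$ by Lemma \ref{Lemma4-2} and \eqref{g_H_1}. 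Fourth, the quadratic-variation drift $\e\int_0^t\int_\ZZ|g(z)|^2\|\tilde u^\e(s)\|^2\varphi_\e(s,z)\nu(dz)ds$ is again $O(\e)$ by the same two lemmas.

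Taking expectations, passing to $\sup_{t\in[0,T]}$, and applying Gronwall's inequality with the Gronwall factor $\exp\bigl(2\gamma T+2C_2^0+\text{(bounded terms)}\bigr)$, I will obtain
\begin{equation*}
\mathbb{E}\!\left(\sup_{t\in[0,T]}e^{-r(t)}\|X_\e(t)\|^2+\int_0^T e^{-r(s)}\|\nabla X_\e(s)\|^2ds+\int_0^T e^{-r(s)}\|X_\e(s)\|_{2\sigma+2}^{2\sigma+2}ds\right)\le C(N)\,\e,
\end{equation*}
and the uniform lower bound $e^{-r(s)}\ge e^{-C_N}$ then upgrades this to the conclusion of the proposition.

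\textbf{Main obstacle.} The delicate point is the simultaneous control of two phenomena: the nonlinear drift $B$ which forces the use of the exponential weight $e^{-r(t)}$, and the jump noise whose martingale part must be bounded by BDG in a way that produces $\mathbb{E}\sup_t\|X_\e\|^2$ with a coefficient strictly less than $1$ so it can be absorbed. Making these two mechanisms compatible requires that the weight $r$ (built from norms of $u^{\varphi_\e}$) remain uniformly bounded as $\e\to 0$; this is where the pathwise estimates \eqref{4444}–\eqref{u^n-est} on $u^{\varphi_\e}$ enter decisively. Once the weight is tamed and BDG is applied with Young's inequality, the vanishing factor $\e$ coming from the intensity rescaling does the rest.
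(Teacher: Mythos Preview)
Your proposal is correct and relies on the same ingredients as the paper's proof (Lemmas~3.5--3.6 of \cite{LG}, the a~priori bounds \eqref{4444}--\eqref{u^n-est} for $u^{\varphi_\e}$, Lemma~\ref{Lemma4-2}, and BDG plus Young for the martingale part). The organizational difference is that the paper does \emph{not} introduce the exponential weight $e^{-r(t)}$ here: it applies It\^o's formula directly to $\|\tilde u^\e-u^{\varphi_\e}\|^2$, splits the result into eight terms $J^\e_1,\dots,J^\e_8$, bounds the nonlinear $F$-contribution $J^\e_4$ by Lemma~3.6 of \cite{LG} with a coefficient depending on powers of $\|\nabla u^{\varphi_\e}\|$ (then replaced by the deterministic constant $C_{4,8}(\|\nabla u(0)\|^p+1)$ via the a~priori bounds), and only afterwards runs an ordinary Gronwall. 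Your weight $r(t)$ simply packages those same Gronwall coefficients into $r'(t)$ up front, so that \eqref{r-set} delivers the negative-definite drift in one stroke; the compensator term is then the only thing left for Gronwall. Both routes are valid and produce the same $O(\e)$ (the paper gets $O(\sqrt\e)$ from a slightly different Young splitting in the BDG step) rate. One small imprecision: the term you split off and absorb on the left after BDG should be $\tfrac12\,\mathbb E\sup_t e^{-r(t)}\|X_\e(t)\|^2$, not $\tfrac12\,\mathbb E\sup_t\|X_\e(t)\|^2$, but since $e^{-r(t)}\le 1$ this only helps you.
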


\begin{proof} Note that
\begin{align*}
\tilde{u}^\e (t)-u^{\varphi_\e}(t)
 =&\, \int^t_0(1+i\alpha)\Delta \left(\tilde{u}^\e (s)- u^{\varphi_\e}(s)\right)ds\\
&\,\,\,\,\,-\int^t_0(1-i\beta)\left(|\tilde{u}^\e (s)|^{2\sigma}\tilde{u}^\e (s)-|u^{\varphi_\e}(s)|^{2\sigma}u^{\varphi_\e}(s)\right)ds\\
&\,\,\,\,\,+\int^t_0\gamma\left(\tilde{u}^\e (s)- u^{\varphi_\e}(s)\right) ds
+\int^t_0\left(F(\tilde{u}^\e (s))-F(u^{\varphi_\e}(s))\right)ds\\
&\,\,\,\,\,+\int^t_0\int_{\ZZ}\left(\tilde{u}^\e (s)- u^{\varphi_\e}(s)\right)g(z)(\varphi_\e(s,z)-1)\nu(dz)ds\\
&\,\,\,\,\,+\int^t_0\int_{\ZZ}\epsilon \tilde{u}^\e (s-)g(z)\left(\eta^{\e ^{-1}\varphi_\e }(dz,ds)-\e ^{-1}\varphi_\e (s,z)\nu(dz)ds\right).\\
\end{align*}

Applying the It\^o formula to $\|\tilde{u}^\e (t)- u^{\varphi_\e}(t)\|^2$ and taking the real part, we have
\begin{align*}
&\,\,\,\,\,\|\tilde{u}^\e (t)- u^{\varphi_\e}(t)\|^2\\
&=2\textmd{Re}\int^t_0\langle\tilde{u}^\e (s)- u^{\varphi_\e}(s),(1+i\alpha)\Delta \left(\tilde{u}^\e (s)- u^{\varphi_\e}(s)\right)\rangle ds\\
&\,\,\,\,\,-2\textmd{Re}\int^t_0\langle\tilde{u}^\e (s)- u^{\varphi_\e}(s),(1-i\beta)\left(|\tilde{u}^\e (s)|^{2\sigma}\tilde{u}^\e (s)-|u^{\varphi_\e}(s)|^{2\sigma}u^{\varphi_\e}(s)\right)\rangle ds\\
&\,\,\,\,\,+2\gamma \textmd{Re}\int^t_0\langle\tilde{u}^\e (s)- u^{\varphi_\e}(s),\tilde{u}^\e (s)- u^{\varphi_\e}(s)\rangle ds\\
&\,\,\,\,\,+2\textmd{Re}\int^t_0\langle\tilde{u}^\e (s)- u^{\varphi_\e}(s),F(\tilde{u}^\e (s))-F(u^{\varphi_\e}(s))\rangle ds\\
&\,\,\,\,\,+2\textmd{Re}\int^t_0\int_{\ZZ}\langle\tilde{u}^\e (s)- u^{\varphi_\e}(s),\left(\tilde{u}^\e (s)- u^{\varphi_\e}(s)\right)g(z)(\varphi_\e(s,z)-1)\rangle\nu(dz)ds\\
&\,\,\,\,\,+\textmd{Re}\int^t_0\int_{\ZZ}2\langle\e g(z)\tilde{u}^\e(s-),\tilde{u}^\e (s)- u^{\varphi_\e}(s)\rangle\left(\eta^{\e^{-1}\varphi_\e}(dz,ds)-\e^{-1}\varphi_\e(s,z)\nu(dz)ds\right)\\
&\,\,\,\,\,+\int^t_0\int_{\ZZ}\|\e g(z)\tilde{u}^\e(s-)\|^2\left(\eta^{\e^{-1}\varphi_\e}(dz,ds)-\e^{-1}\varphi_\e(s,z)\nu(dz)ds\right)\\
&\,\,\,\,\,+\e\int^t_0\int_{\ZZ}|g(z)|^2\|\tilde{u}^\e(s-)\|^2\varphi_\epsilon(s,z)\nu(dz)ds\\
&=:J^{\e}_1(t)+J^{\e}_2(t)+J^{\e}_3(t)+J^{\e}_4(t)+J^{\e}_5(t)+J^{\e}_6(t)+J^{\e}_7(t)+J^{\e}_8(t).
\end{align*}
A straightforward calculation shows
\begin{equation}\label{J-1-est}
 \begin{split}
J^{\e}_1(t)&=-2\textmd{Re}\int^t_0\langle\nabla\left(\tilde{u}^\e (s)- u^{\varphi_\e}(s)\right),(1+i\alpha)\nabla \left(\tilde{u}^\e (s)- u^{\varphi_\e}(s)\right)\rangle ds\\
&=-2\int^t_0\|\nabla\left(\tilde{u}^\e (s)- u^{\varphi_\e}(s)\right)\|^2ds;
\end{split}
\end{equation}
\begin{align}\label{J-3-est}
J^{\e}_3(t)=2\gamma\int^t_0\|\tilde{u}^\e (s)- u^{\varphi_\e}(s)\|^2ds;
\end{align}
and
\begin{equation}\label{J-5-est}
\begin{split}
J^{\e}_5(t) \leq&\, 2\int^t_0\int_{\ZZ}\|\tilde{u}^\e (s)- u^{\varphi_\e}(s)\|^2|g(z)||\varphi_\e(s,z)-1|\nu(dz)ds\\
=&\, 2\int^t_0\|\tilde{u}^\e (s)- u^{\varphi_\e}(s)\|^2\int_{\ZZ}|g(z)||\varphi_\e(s,z)-1|\nu(dz)ds.
\end{split}
\end{equation}

According to Lemma 3.5 in Lin and Gao \cite{LG}, for $0<|\beta|<\frac{\sqrt{2\sigma+1}}{\sigma}$, we have
\begin{align*}
&\,\,\,\,\,\textmd{Re}\left[-(1-i\beta)\langle\tilde{u}^\e (s)- u^{\varphi_\e}(s),\left(|\tilde{u}^\e (s)|^{2\sigma}\tilde{u}^\e (s)-|u^{\varphi_\e}(s)|^{2\sigma}u^{\varphi_\e}(s)\right)\rangle\right]\\
&\leq-\left(1-\frac{\sigma}{\sqrt{2\sigma+1}}|\beta|\right)2^{-2\sigma}\|\tilde{u}^\e (s)- u^{\varphi_\e}(s)\|^{2\sigma+2}_{2\sigma+2}.
\end{align*}
Therefore,
\begin{align}\label{J-2-est}
J^{\e}_2(t)\leq-\left(1-\frac{\sigma}{\sqrt{2\sigma+1}}|\beta|\right)2^{-2\sigma}\int^t_0\|\tilde{u}^\e (s)- u^{\varphi_\e}(s)\|^{2\sigma+2}_{2\sigma+2}ds<0.
\end{align}

By using Lemma 3.6 in \cite{LG}, we have that there exist small parameters $\hat{\e}_1,\hat{\e}_2$ such that
\begin{align}\label{J-4-est}
\nonumber
J^{\e}_4(t)&\leq 2\hat{\e}_1\int^t_0\|\nabla(\tilde{u}^\e (s)- u^{\varphi_\e}(s))\|^2ds+2\hat{\e}_2\int^t_0|\tilde{u}^\e (s)- u^{\varphi_\e}(s)\|^{2\sigma+2}_{2\sigma+2}ds\\
&+2\int^t_0\big(C_{4,4}(\e_5)+\e_6\|u^{\varphi_\e}(s)\|^2+C_{4,5}(\e_7)\|\nabla u^{\varphi_\e}(s)\|^{\frac{2\sigma}{\sigma-1}}\\
\nonumber
&+C_{4,6}(\e_8)\|\nabla u^{\varphi_\e}(s)\|^{\frac{7\sigma+2}{\sigma+1}}+C_{4,7}(\e_9)\|\nabla u^{\varphi_\e}(s)\|^{\frac{10\sigma+4}{\sigma+4}}\big)\|\tilde{u}^\e (s)- u^{\varphi_\e}(s)\|^2ds\\
\nonumber
&\leq2C_{4,8}(\|\nabla u(0)\|^p+1)\int^t_0\|\tilde{u}^\e (s)- u^{\varphi_\e}(s)\|^2ds,
\end{align}
  where $2<\frac{7\sigma+2}{\sigma+1}<2\sigma$, $2<\frac{10\sigma+4}{\sigma+4}<2\sigma$, $\e_i$ $(i=5,\dots,9)$ and $C_{4,j}$ $(j=4,\dots, 8)$  are   constants  appeared in (\ref{Const4-4}). 

Combining (\ref{J-1-est}),  (\ref{J-3-est}), (\ref{J-5-est}), (\ref{J-2-est}) and  (\ref{J-4-est}) and  applying Gronwall's inequality, we have
\begin{equation}\label{644}
\begin{split}
&\,\,\,\,\,\sup_{t\in[0,T]}\|\tilde{u}^\e (t)- u^{\varphi_\e}(t)\|^2+\int^T_0\|\tilde{u}^\e (t)-u^{\varphi_\e}(t)\|^2_Vds+\int^T_0\|\tilde{u}^\e (s)-u^{\varphi_\e}(s)\|^{2\sigma+2}_{2\sigma+2}ds\\
&\leq\left[\sup_{t\in[0,T]}|J^{\e}_6(t)|+\sup_{t\in[0,T]}|J^{\e}_7(t)|+\sup_{t\in[0,T]}|J^{\e}_8(t)|\right]\\
&\,\,\,\,\,\cdot\exp{\left[\int^T_0\left(2\gamma+2C_{4,8}(\|\nabla u(0)\|^p+1)+\int_{\ZZ}|g(z)||\varphi_\e(s,z)-1|\nu(dz)\right)ds\right]}\\
&\leq\left[\sup_{t\in[0,T]}|J^{\e}_6(t)|+\sup_{t\in[0,T]}|J^{\e}_7(t)|+\sup_{t\in[0,T]}|J^{\e}_8(t)|\right]
\cdot e^{\left(2\gamma T+2C_{4,8}(\|\nabla u(0)\|^p+1)T+C^0_2\right)}.
\end{split}
\end{equation}
Similar to $(\ref{eq:eqI3})$, $(\ref{eq:eqI4})$ and $(\ref{eq:eqI5})$, we have{\small
\begin{equation}\label{J-6-est}
\begin{split}
  & \mathbb{E}\left(\sup_{t\in[0,T]}|J^{\e}_6(t)|\right)\\
   \leq&\, 4\mathbb{E}\left(\int^t_0\int_{\ZZ}|2\langle\e \tilde{u}^\e(s-)g(z),\widetilde{u}^\e (s)- u^{\varphi_\e}(s)\rangle|^2\eta^{\e^{-1}\varphi_\e}(dz,ds)\right)^{\frac{1}{2}}\\
 \leq &\, 8\mathbb{E}\left(\int^t_0\int_{\ZZ}\e^2\|\tilde{u}^\e(s-)\|^2| g(z)|^2\|\tilde{u}^\e (s)- u^{\varphi_\e}(s)\|^2\eta^{\e^{-1}\varphi_\e}(dz,ds)\right)^{\frac{1}{2}}\\
 \leq&\, 8\e\mathbb{E}\left(\sup_{t\in[0,T]}\|\tilde{u}^\e (s)- u^{\varphi_\e}(s)\|\left(\int^t_0\int_{\ZZ}\|\tilde{u}^\e(s-)\|^2| g(z)|^2\eta^{\e^{-1}\varphi_\e}(dz,ds)\right)^{\frac{1}{2}}\right)\\
 \leq &\, \frac{4\sqrt{\e}}{C_{5,2}}\mathbb{E}\left(\sup_{t\in[0,T]}\|\tilde{u}^\e (t)- u^{\varphi_\e}(t)\|^2\right)
+4\sqrt{\e}C_{5,2}\mathbb{E}\left(\int^t_0\int_{\ZZ}\e\|\tilde{u}^\e(s-)\|^2|g(z)|^2\eta^{\e^{-1}\varphi_\e}(dz,ds)\right)\\
 =&\, \frac{4\sqrt{\e}}{C_{5,2}}\mathbb{E}\left(\sup_{t\in[0,T]}\|\tilde{u}^\e (t)- u^{\varphi_\e}(t)\|^2\right)
+4\sqrt{\e}C_{5,2}\mathbb{E}\left(\int^t_0\int_{\ZZ}\|\tilde{u}^\e(s-)\|^2| g(z)|^2\varphi_\e(s,z)\nu(dz)ds\right)\\
 \leq&\, \frac{4\sqrt{\e}}{C_{5,2}}\mathbb{E}\left(\sup_{t\in[0,T]}\|\tilde{u}^\e (t)- u^{\varphi_\e}(t)\|^2\right)+4\sqrt{\e}C_{5,2}\cdot C^0_1\mathbb{E}\left(\sup_{t\in[0,T]}\|\tilde{u}^\e(t)\|^2\right);
\end{split}
\end{equation}}
\begin{align}\label{J-7-est}
\mathbb{E}\left(\sup_{t\in[0,T]}|J^{\e}_7(t)|\right)\leq
2\e C^0_1\mathbb{E}\left(\sup_{t\in[0,T]}\|\tilde{u}^\e(t)\|^2\right);
\end{align}
and
\begin{align}\label{J-8-est}
\nonumber
\mathbb{E}\left(\sup_{t\in[0,T]}|J^{\e}_8(t)|\right)&\leq
\e\mathbb{E}\left(\sup_{t\in[0,T]}\|\tilde{u}^\e(t)\|^2\int^t_0\int_{\ZZ}|g(z)|^2\varphi_\epsilon(s,z)\nu(dz)ds\right)\\
&\leq\e C^0_1\mathbb{E}\left(\sup_{t\in[0,T]}\|\tilde{u}^\e(t)\|^2\right).
\end{align}
In view of (\ref{644}), (\ref{J-6-est}), (\ref{J-7-est}) and (\ref{J-8-est}),  we know    that there exist constants $C_{5,4}\in (0,\infty)$ and $\tilde{\e}_0>0$ such that for any $0<\e\leq\tilde{\e}_0$,
\begin{align*}
&\mathbb{E}\left(\sup_{t\in[0,T]}\|\tilde{u}^\e (t)-u^{\varphi_\e}(t)\|^2
+\int^T_0\|\tilde{u}^\e (s)-u^{\varphi_\e}(t)\|^2_Vdt+\int^T_0\|\tilde{u}^\e (t)-u^{\varphi_\e}(t)\|^{2\sigma+2}_{2\sigma+2}dt\right)\\
&\leq
\sqrt{\e}C_{5,4}\cdot e^{\left(2\gamma T+2C_{4,8}(\|\nabla u(0)\|^p+1)T+C^0_2\right)},
\end{align*}
which completes the proof of  this proposition.
\end{proof}

 \vskip0.5cm
\noindent{\bf Acknowledgments}:  The research of  R. Wang  is partially supported by  NNSFC grant 11871382. The research of B. Zhang is
partially supported by NNSFC grants 11971361 and 11731012.

 \vskip0.5cm

\end{document}